\numberwithin{equation}{section}
\newtheorem{theorem}{Theorem}[section]
\newtheorem{definition}[theorem]{Definition}
\newtheorem{lemma}[theorem]{Lemma}
\newtheorem{corollary}[theorem]{Corollary}
\newtheorem{notation}[theorem]{Notation}
\newtheorem{remark}[theorem]{Remark}
\theoremstyle{definition}
\newtheorem{example}[theorem]{Example}
\newtheorem{conjecture}[theorem]{Conjecture}
\begin{document}

%% \articletype{}% Specify the article type or omit as appropriate

\title{$(\sigma, \tau)$-Derivations of Number Rings with Coding Theory Applications}

\author{Praveen Manju and Rajendra Kumar Sharma}
%% \affil{Department of Mathematics, Indian Institute of Technology Delhi, Hauz Khas, New Delhi-110016, India}
\date{}
\maketitle

\begin{center}
\noindent{\small Department of Mathematics, \\Indian Institute of Technology Delhi, \\ Hauz Khas, New Delhi-110016, India$^{1}$}
\end{center}

\footnotetext[1]{{\em E-mail addresses:} \url{praveenmanjuiitd@gmail.com}(Corresponding Author: Praveen Manju), \url{rksharmaiitd@gmail.com}(Rajendra Kumar Sharma).}

\medskip

\begin{abstract}
In this article, we study $(\sigma, \tau)$-derivations of number rings by considering them as commutative unital $\mathbb{Z}$-algebras. We begin by characterizing all $(\sigma, \tau)$-derivations and inner $(\sigma, \tau)$-derivations of the ring of algebraic integers of a quadratic number field. Then we characterize all $(\sigma, \tau)$-derivations of the ring of algebraic integers $\mathbb{Z}[\zeta]$ of a $p^{\text{th}}$-cyclotomic number field $\mathbb{Q}(\zeta)$ ($p$ odd rational prime and $\zeta$ a primitive $p^{\text{th}}$-root of unity). We also conjecture (using SageMath and MATLAB) an \enquote{if and only if} condition for a $(\sigma, \tau)$-derivation $D$ on $\mathbb{Z}[\zeta]$ to be inner. We further characterize all $(\sigma, \tau)$-derivations and inner $(\sigma, \tau)$-derivations of the bi-quadratic number ring $\mathbb{Z}[\sqrt{m}, \sqrt{n}]$ ($m$, $n$ distinct square-free rational integers). In each of the above cases, we also determine the rank and an explicit basis of the derivation algebra consisting of all $(\sigma, \tau)$-derivations of the number ring. As a consequence, we solve the twisted derivation problem in the ring of algebraic integers of a quadratic number field and in a bi-quadratic number ring, and we conjecture a solution of the twisted derivation problem in the ring of algebraic integers of a $p^{\text{th}}$-cyclotomic number field. Finally, we give the applications of our work in coding theory by constructing Hom-IDD codes.
\end{abstract}

\textbf{Keywords:}
$(\sigma, \tau)$-derivation, inner $(\sigma, \tau)$-derivation, number ring, ring of algebraic \\ integers, quadratic, cyclotomic, bi-quadratic, coding theory

\textbf{Mathematics Subject Classification (2020):}  Primary: 13N15, 11R04, 94B05; \\ Secondary: 11R11, 11R18, 11R16, 11C20.

\section{Introduction}\label{section 1}
Derivations have been studied in various fields like algebra, functional analysis, Fourier analysis, measure theory, operator theory, harmonic analysis, differential equations, differential geometry, etc. They play an essential role in many important branches of mathematics and physics. The notion of derivation in rings has been applied to and studied in various algebras, yielding their theories of derivations. For example, BCI-algebras \cite{Muhiuddin2012}, von Neumann algebras \cite{Brear1992},  incline algebras which have many applications \cite{Kim2014}, MV-algebras \cite{KamaliArdakani2013, Mustafa2013}, Banach algebras \cite{Raza2016}, lattices that have a very crucial role in various fields like information theory: information recovery, management of information access and cryptanalysis \cite{Chaudhry2011}. Differentiable manifolds, operator algebras, $\mathbb{C}^{*}$-algebras, and the representation theory of Lie groups continue to be studied using derivations \cite{Klimek2021}. Derivations on rings and algebras help in the study of their structure. For a historical account and more applications of derivations, we refer the reader to \cite{Atteya2019, Haetinger2011, MohammadAshraf2006}. The idea of an $(s_{1}, s_{2})$-derivation was introduced by Jacobson \cite{Jacobson1964}. These derivations were later on commonly called as $(\sigma, \tau)$ or $(\theta, \phi)$-derivation. These derivations have been highly studied in prime and semiprime rings and have been principally used in solving functional equations \cite{Brear1992}.  Twisted derivations have huge applications. They are used in multiplicative deformations and discretizations of derivatives that have many applications in models of quantum phenomena and the analysis of complex systems and processes. They are extensively investigated in physics and engineering. Using twisted derivations, Lie algebras are generalized to hom-Lie algebras, and the central extension theory is developed for hom-Lie algebras analogously to that for Lie algebras. Just as Lie algebras were initially studied as algebras of derivations, hom-Lie algebras were defined as algebras of twisted derivations. The generalizations (deformations and analogs) of the Witt algebra, the complex Lie algebra of derivations on the algebra of Laurent polynomials $\mathbb{C}[t, t^{-1}]$ in one variable, are obtained using twisted derivations. Deformed Witt and Virasoro-type algebras have applications in analysis, numerical mathematics, algebraic geometry, arithmetic geometry, number theory, and physics. We refer the reader to \cite{Hartwig2006, ilwale2023noncommutative, Larsson2017, Siebert1996} for details. Twisted derivations have been used to generalize Galois theory over division rings and in the study of $q$-difference operators in number theory. The notion of absolute derivations has also been used in number theory (see \cite{Lagarias2005} and \cite{N.Kurokawa2003}). Derivations, especially $(\sigma, \tau)$-derivations of rings, have various applications in coding theory \cite{Creedon2019, Boucher2014}. For more applications of $(\sigma, \tau)$-derivations, we refer the reader to \cite{AleksandrAlekseev2020}.

The earliest work involving derivations in number fields is seen in A. Weil's paper \cite{Weil1943}, which gave an idea to develop the theory of differents in algebraic number fields on an arithmetical analog of differentiation in function fields. In \cite{Kawada1951} and \cite{Kinohara1952}, the authors use A. Weil's idea and derivations to develop the theory of the relative differents in algebraic number fields. In \cite{Narkiewicz1969}, the author proves a theorem of A. Weil concerning the notion of essential $I$-derivation ($I$ an ideal in the ring of algebraic integers of a finite extension of an algebraic number field). Since then, no literature can be found that explores derivations in number fields, except \cite{Chaudhuri}, where the author studies $(\sigma, \tau)$-derivations of the ring of algebraic integers of a quadratic number field. Twisted derivations in number fields are yet to be explored.
Derivations have been studied, in general, in field extensions as well. The relationship between derivations and field extensions can be found in \cite{Davis1973}. Some more references where derivations in field extensions have been studied are  \cite{Callahan1973, Derksen1993, Deveney1979, Heerema1981, Heerema1962, Messmer1995, Mordeson1972, Suzuki1981, Thwing1974, Ziegler2003}. Also, this article considers the twisted derivation problem: Are all twisted derivations inner? Or is the space of outer twisted derivations trivial? This problem was initially posed for derivations in group algebras. We refer the reader to \cite{AleksandrAlekseev2020, A.A.Arutyunov2020, Arutyunov2020b, Arutyunov2020} for the history and importance of the derivation problem. This article considers the analogous problem for $(\sigma, \tau)$-derivations in the ring of algebraic integers of quadratic, cyclotomic, and bi-quadratic number fields.

Let $R$ be a commutative unital ring and $\mathcal{A}$ be an associative $R$-algebra. Let $(\sigma, \tau)$ be a pair of $R$-algebra endomorphisms of $\mathcal{A}$. A $(\sigma, \tau)$-derivation $D: \mathcal{A} \rightarrow \mathcal{A}$ satisfies the identity \begin{equation*} D(\alpha^{k}) = \left(\sum_{i+j=k-1} \sigma(\alpha^{i}) \tau(\alpha^{j})\right) D(\alpha)\end{equation*} for all $\alpha \in \mathcal{A}$ and for all $k \in \mathbb{N}$, where $i,j$ run over non-negative integers. A natural question one may ask is: under what assumptions on an $R$-linear map $D:\mathcal{A} \rightarrow \mathcal{A}$ satisfying the above relations is a $(\sigma, \tau)$-derivation? Similar questions have been posed for derivations of a ring (for example, see \cite{Bridges1984, hosseini2018identities, Vukman2005}). We study this question in number rings. In this article, we mainly study $(\sigma, \tau)$-derivations of number rings. We partition the article into five sections. In Section \ref{subsection 2.3}, we first obtain results on $(\sigma, \tau)$-derivations and inner $(\sigma, \tau)$-derivations of a commutative unital algebra $\mathcal{A}$ over a commutative ring $R$ with unity $1$. We find a necessary condition for an $R$-linear map $D$ on $\mathcal{A}$ to be a $(\sigma, \tau)$-derivation and give counterexamples showing that the condition is not sufficient. This motivates us to study these derivations in certain number rings where this condition is both necessary as well as sufficient. We also determine a necessary and sufficient condition for a $(\sigma, \tau)$-derivation on $\mathcal{A}$ to be inner. In Section \ref{section 3}, we apply our results obtained in Section \ref{subsection 2.3} to study $(\sigma, \tau)$-derivations and inner $(\sigma, \tau)$-derivations of number rings by considering them as commutative unital $\mathbb{Z}$-algebras. Section \ref{section 3} is further subdivided into three subsections. In Subsection \ref{subsection 3.1}, we study $(\sigma, \tau)$-derivations and inner $(\sigma, \tau)$-derivations of the ring of algebraic integers of a quadratic number field. We obtain necessary and sufficient conditions for a $\mathbb{Z}$-linear map $D$ on the ring of algebraic integers of a quadratic field to be a $(\sigma, \tau)$-derivation. We also classify the inner $(\sigma, \tau)$-derivations of the ring of algebraic integers of a quadratic field. In Subsection \ref{subsection 3.2}, we study $(\sigma, \tau)$-derivations and inner $(\sigma, \tau)$-derivations of the ring of algebraic integers of a cyclotomic number field. We obtain necessary and sufficient conditions for a $\mathbb{Z}$-linear map $D$ on the ring of algebraic integers $\mathbb{Z}[\zeta]$ of a  $p^{\text{th}}$ cyclotomic field $\mathbb{Q}(\zeta)$ to be a $(\sigma, \tau)$-derivation. We prove that $\mathcal{D}_{(\sigma, \tau)}(\mathbb{Z}[\zeta])$ is a $\mathbb{Z}$-module having rank $p-1$. Further, in this process, we also propose two conjectures giving a necessary and sufficient condition on a $(\sigma, \tau)$-derivation of $\mathbb{Z}[\zeta]$ to be inner. This is done with the help of SageMath and MATLAB. In Subsection \ref{subsection 3.3}, a beautiful application of \th\ref{lemma 2.1} is given in determining all $(\sigma, \tau)$-derivations of the number ring $\mathbb{Z}[\sqrt{m}, \sqrt{n}]$ for every pair $(\sigma, \tau)$ of distinct ring endomorphisms $\sigma$ and $\tau$ of $\mathbb{Z}[\sqrt{m}, \sqrt{n}]$. As a result, we establish that $D_{(\sigma, \tau)}(\mathbb{Z}[\sqrt{m}, \sqrt{n}])$ is a $\mathbb{Z}$-module having rank $4$. Further, we obtain a necessary and sufficient condition for a $(\sigma, \tau)$-derivation of $\mathbb{Z}[\sqrt{m}, \sqrt{n}]$ to be inner. Finally, in Section \ref{section 4}, we discuss the coding theory applications of our work. We introduce the notion of Hom-IDD code. In Section \ref{section 5}, we conclude our findings.

\section{Preliminaries}
\subsection{Some Basic Definitions and Results on $(\sigma, \tau)$-Derivations}\label{subsection 2.1}
Let $R$ be a commutative unital ring and $\mathcal{A}$ be an associative $R$-algebra. Let $(\sigma, \tau)$ be a pair of $R$-algebra endomorphisms of $\mathcal{A}$.

\begin{definition}\label{definition 2.1}
An $R$-linear map $d:\mathcal{A} \rightarrow \mathcal{A}$ that satisfies $d(\alpha \beta) = d(\alpha) \beta + \alpha d(\beta)$ for all $\alpha, \beta \in \mathcal{A}$, is called a derivation of $\mathcal{A}$. It is called inner if there exists some $\beta \in \mathcal{A}$ such that $d(\alpha) = \beta \alpha - \alpha \beta$ for all $\alpha \in \mathcal{A}$, and then we denote it by $d_{\beta}$. The elements of the quotient of the $R$-module of all derivations of $\mathcal{A}$ by the $R$-submodule of all inner derivations are called outer derivations.
\end{definition}

\begin{definition}\label{definition 2.2}
A $(\sigma, \tau)$-derivation $D:\mathcal{A} \rightarrow \mathcal{A}$ is an $R$-linear map that satisfies the $(\sigma, \tau)$-twisted generalized identity: $D(\alpha \beta) = D(\alpha)\tau(\beta) + \sigma(\alpha) D(\beta)$ for all $\alpha, \beta \in \mathcal{A}$. It is called inner if there exists some $\beta \in \mathcal{A}$ such that $D(\alpha) = \beta \tau(\alpha) - \sigma(\alpha) \beta$ for all $\alpha \in \mathcal{A}$, and then we denote it by $D_{\beta}$. The elements of the quotient of the $R$-module of all $(\sigma, \tau)$-derivations of $\mathcal{A}$ by the $R$-submodule of all inner $(\sigma, \tau)$-derivations are called outer $(\sigma, \tau)$-derivations.
\end{definition}

\begin{notation}\label{notation 2.3}
We denote the set of all $(\sigma, \tau)$-derivations on $\mathcal{A}$ by $\mathcal{D}_{(\sigma, \tau)}(\mathcal{A})$. We denote the corresponding set of inner $(\sigma, \tau)$-derivations on $\mathcal{A}$ by $\text{Inn}_{(\sigma, \tau)}(\mathcal{A})$. We denote the corresponding set of outer $(\sigma, \tau)$-derivations on $\mathcal{A}$ by $\text{Out}_{(\sigma, \tau)}(\mathcal{A})$.
\end{notation}

\begin{remark}\label{remark 2.4}
If $\sigma = \tau = id_{\mathcal{A}}$ (the identity map on $\mathcal{A}$), then the usual Leibniz identity holds, and $(\sigma, \tau)$-derivation, inner $(\sigma, \tau)$-derivation, and outer $(\sigma, \tau)$-derivation respectively become the ordinary derivation, ordinary inner derivation and ordinary outer derivation on $\mathcal{A}$. Defining componentwise sum and module action, $\mathcal{D}_{(\sigma, \tau)}(\mathcal{A})$ becomes an $R$- as well as $\mathcal{A}$-module, and  $\text{Inn}_{(\sigma, \tau)}(\mathcal{A})$ become its submodule. If $1$ is the unity in $\mathcal{A}$ and $D$ is a $(\sigma, \tau)$-derivation of $\mathcal{A}$, then $D(1) = 0$.
\end{remark}

In view of the the above Definition \ref{definition 2.2} and the Remark \ref{remark 2.4}, the outer $(\sigma, \tau)$-derivations are precisely the elements of the factor module $\text{Out}_{(\sigma, \tau)}(\mathcal{A}) = \frac{\mathcal{D}_{(\sigma, \tau)}(\mathcal{A})}{\text{Inn}_{(\sigma, \tau)}(\mathcal{A})}$. Also, note that the set $\mathcal{D}_{(\sigma, \tau)}(\mathcal{A}) \setminus \text{Inn}_{(\sigma, \tau)}(\mathcal{A})$ is the set of all non-inner $(\sigma, \tau)$-derivations of $\mathcal{A}$. The following lemma establishes a connection between our notions of outer $(\sigma, \tau)$-derivations and non-inner $(\sigma, \tau)$-derivations of $\mathcal{A}$. Its proof can be found in \cite{Manju2024}.
\begin{lemma}\label{lemma 2.5}
Let $T = \{D_{i} \in \mathcal{D}_{(\sigma, \tau)}(\mathcal{A}) \mid i \in I\}$ ($I$ some indexing set) be a left transversal of $\text{Inn}_{(\sigma, \tau)}(\mathcal{A})$ in $\mathcal{D}_{(\sigma, \tau)}(\mathcal{A})$ with $0$ as the coset representative of the coset $\text{Inn}_{(\sigma, \tau)}(\mathcal{A})$. Then the non-inner $(\sigma, \tau)$-derivations of $\mathcal{A}$ correspond to the elements in the set $\bigcup_{D_{i} \in T \setminus \{0\}} (D_{i} + \text{Inn}_{(\sigma, \tau)}(\mathcal{A}))$. More precisely, $\mathcal{D}_{(\sigma, \tau)}(\mathcal{A}) \setminus \text{Inn}_{(\sigma, \tau)}(\mathcal{A}) = \bigcup_{D_{i} \in T \setminus \{0\}} (D_{i} + \text{Inn}_{(\sigma, \tau)}(\mathcal{A}))$.
\end{lemma}

\begin{remark}\label{remark 2.6}
In view of Lemma \ref{lemma 2.5}, $D$ is a non-inner derivation of $\mathcal{A}$ if and only if $D + \text{Inn}_{(\sigma, \tau)}(\mathcal{A})$ is a non-zero element of $\text{Out}_{(\sigma, \tau)}(\mathcal{A}) = \frac{\mathcal{D}_{(\sigma, \tau)}(\mathcal{A})}{\text{Inn}_{(\sigma, \tau)}(\mathcal{A})}$. In other words, $D$ is a non-inner derivation of $\mathcal{A}$ if and only if $D + \text{Inn}_{(\sigma, \tau)}(\mathcal{A})$ is a non-trivial outer derivation of $\mathcal{A}$. Therefore, studying the non-trivial outer derivations of $\mathcal{A}$ is equivalent to studying the non-inner derivations of $\mathcal{A}$ (see {\cite[Chapter 11]{pierce}} for details).
\end{remark} 

Many authors have defined an outer derivation of a ring $R$ (or algebra $\mathcal{A}$) to be a non-inner derivation of $R$ (or $\mathcal{A}$). Some references in this regard are \cite{batty1978derivations, chuang2005identities, dhara2022note, eroǧlu2017images, hall1972derivations, miles1964derivations, prajapati2022b, sakai1966derivations, weisfeld1960derivations}. In \cite{arutyunov2019smooth} and \cite{mishchenko2020description}, the authors initially describe the set $\mathcal{D}(\mathcal{A}) \setminus \text{Inn}(\mathcal{A})$ as the set of outer derivations but then argue that it is more natural to call the quotient module $\text{Out}(\mathcal{A}) = \frac{\mathcal{D}(\mathcal{A})}{\text{Inn}(\mathcal{A})}$ as the set of outer derivations because this module can be interpreted as $1^{\text{st}}$ Hochschild cohomology module of $\mathcal{A}$ with coefficients in $\mathcal{A}$. Similarly, the quotient module $\text{Out}_{(\sigma, \tau)}(\mathcal{A}) = \frac{\mathcal{D}_{(\sigma, \tau)}(\mathcal{A})}{\text{Inn}_{(\sigma, \tau)}(\mathcal{A})}$ is our set of outer $(\sigma, \tau)$-derivations because this module can be interpreted as $1^{\text{st}}$ $(\sigma, \tau)$-Hochschild cohomology module of $\mathcal{A}$ with coefficients in $\mathcal{A}$ (see \cite{ilwale2023noncommutative}).

\begin{definition}\label{definition 2.7}
$\mathcal{A}$ is said to be $(\sigma, \tau)$-differentially trivial if $\mathcal{A}$ has only zero $(\sigma, \tau)$-derivation, that is, $\mathcal{D}_{(\sigma, \tau)}(\mathcal{A}) = \{0\}$.
\end{definition}

\begin{definition}\label{definition 2.8}
If $R$ is a ring and $G$ is a group, then the group ring of $G$ over $R$ is defined as the set $$RG = \{\sum_{g \in G} a_{g} g \mid a_{g} \in R, \forall g \in G \hspace{0.2cm} \text{and} \hspace{0.2cm} |\text{supp}(\alpha)| < \infty \},$$ where for $\alpha = \sum_{g \in G} a_{g} g$, $\text{supp}(\alpha)$ denotes the support of $\alpha$ that consists of elements from $G$ that appear in the expression of $\alpha$. The set $RG$ is a ring concerning the componentwise addition and multiplication defined respectively by: For $\alpha = \sum_{g \in G} a_{g} g$, $\beta = \sum_{g \in G} b_{g} g$ in $RG$, $$(\sum_{g \in G} a_{g} g ) + (\sum_{g \in G} b_{g} g) = \sum_{g \in G}(a_{g} + b_{g}) g \hspace{0.2cm} \text{and} \hspace{0.2cm} \alpha \beta = \sum_{g, h \in G} a_{g} b_{h} gh.$$ If the ring $R$ is commutative having unity $1$ and the group $G$ is abelian having identity $e$, then $RG$ becomes a commutative unital algebra over $R$ with identity $1 = 1e$. 
\end{definition}

If $\sigma$ and $\tau$ are algebra endomorphisms of $RG$ which are $R$-linear extensions of group endomorphisms of $G$ and $D:RG \rightarrow RG$ is a $(\sigma, \tau)$-derivation, then $$D(e) = D(ee) = D(e) \tau(e) + D(e) \sigma(e) = D(e) + D(e)$$ so that $D(e) = 0$.

\subsection{Some Basic Definitions and Results from Algebraic Number Theory}
We shall need some definitions and results from algebraic number theory, for which we refer the reader to \cite{Marcus2018, IanStewart2002}. Still, we summarize a few, as follows: A complex number is an algebraic number if it is a zero of some non-constant polynomial over $\mathbb{Q}$. A number field is a finite field extension of $\mathbb{Q}$ that contains algebraic numbers. A number ring is any subring of a number field. An algebraic integer is a complex number that is a zero of some monic polynomial over $\mathbb{Z}$. The ring of algebraic integers of a number field $K$, denoted by $O_{K}$, is defined as $O_{K} = K \cap \mathbb{B}$, where $\mathbb{B}$ is the ring of all algebraic integers. Thus $O_{K}$ contains precisely all those algebraic integers which belong to $K$. A subring of $O_{K}$ which is also a $\mathbb{Z}$-module of rank $[K: \mathbb{Q}]$ is called an order of $K$. A set of algebraic numbers $\{\alpha_{1}, \alpha_{2}, ..., \alpha_{r}\}$ is called an integral basis of $K$ (or $O_{K}$) if each element in $O_{K}$ is uniquely expressible as a $\mathbb{Z}$-linear combination of $\alpha_{1}, \alpha_{2}, ..., \alpha_{r}$. Every integral basis is a $\mathbb{Q}$-basis. An integral basis having form $\{1, \theta, ..., \theta^{n-1}\}$ for some $\theta \in O_{K}$, is called a power basis of $K$. A number field $K$ that has a power basis is called monogenic. A number field $K$ that has degree $2$ over $\mathbb{Q}$ is called quadratic. A number field that has the form $K = \mathbb{Q}(\zeta)$, where $\zeta = e^{2 \pi i / m}$ ($m \geq 1$) is a primitive complex $m^{\text{th}}$ root of unity, is called an $m^{\text{th}}$ cyclotomic field. We state below some results that we may need later.

\begin{theorem}[{\cite[Theorem 2.2]{IanStewart2002}}]\th\label{theorem 1.1}
Let $K$ be a number field. Then $K = \mathbb{Q}(\theta)$ for some algebraic number $\theta$, or in other words, number fields are finite simple extensions of $\mathbb{Q}$. In fact, $K = \mathbb{Q}(\theta)$ for some algebraic integer $\theta$.
\end{theorem}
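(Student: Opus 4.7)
The plan is to prove this in two stages: first establish that $K=\mathbb{Q}(\theta)$ for some algebraic number $\theta$ (the primitive element theorem for number fields), and then upgrade $\theta$ to an algebraic integer by a clearing-denominators trick.

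For the first stage, since $K/\mathbb{Q}$ is finite, $K=\mathbb{Q}(\alpha_1,\dots,\alpha_n)$ for finitely many algebraic numbers. I would proceed by induction on $n$, the case $n=1$ being trivial. The entire content lies in the two-generator case $K=\mathbb{Q}(\alpha,\beta)$: here I would let $f,g\in\mathbb{Q}[x]$ be the minimal polynomials of $\alpha,\beta$, with full sets of conjugates $\alpha=\alpha_1,\dots,\alpha_r$ and $\beta=\beta_1,\dots,\beta_s$ in $\mathbb{C}$. Since $\mathbb{Q}$ is infinite, I can pick $c\in\mathbb{Q}$ avoiding the finitely many bad values
\[
c\;\neq\;\frac{\alpha-\alpha_i}{\beta_j-\beta}\qquad (1\le i\le r,\;2\le j\le s),
\]
and set $\theta:=\alpha+c\beta$. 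The key verification is that $\mathbb{Q}(\alpha,\beta)=\mathbb{Q}(\theta)$. One inclusion is obvious. For the other, consider $h(x):=f(\theta-cx)\in\mathbb{Q}(\theta)[x]$; by construction $h(\beta)=0$, while for $j\ge 2$ one has $\theta-c\beta_j\ne\alpha_i$ for every $i$, so $h(\beta_j)\ne 0$. Hence $\gcd(h,g)$ in $\mathbb{Q}(\theta)[x]$ has $\beta$ as its unique common root, so this gcd equals $x-\beta$ up to a scalar; thus $\beta\in\mathbb{Q}(\theta)$, whence $\alpha=\theta-c\beta\in\mathbb{Q}(\theta)$.

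For the second stage, given the primitive element $\theta$ from the previous step, let $a_nx^n+a_{n-1}x^{n-1}+\cdots+a_0\in\mathbb{Z}[x]$ be a polynomial (not necessarily monic) vanishing on $\theta$, with $a_n\ne 0$. Multiplying the defining equation by $a_n^{n-1}$ and rearranging shows that $\theta':=a_n\theta$ satisfies the monic integral polynomial
\[
(x)^n+a_{n-1}x^{n-1}+a_na_{n-2}x^{n-2}+\cdots+a_n^{n-1}a_0\;=\;0,
\]
so $\theta'$ is an algebraic integer. Since $a_n\in\mathbb{Q}^\times$, we have $\mathbb{Q}(\theta')=\mathbb{Q}(\theta)=K$, completing the proof.

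The main obstacle is the gcd argument in the two-generator step: one has to be careful to phrase it inside $\mathbb{Q}(\theta)[x]$ (not $\mathbb{C}[x]$) and to verify that the conjugate-avoidance condition on $c$ really forces $h$ and $g$ to have only $\beta$ as common root. Everything else — the inductive reduction, the infinitude of $\mathbb{Q}$, and the clearing-denominators trick — is routine.
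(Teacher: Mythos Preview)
Your proof is correct and follows the standard route: the primitive element theorem for separable (here, characteristic-zero) extensions via the two-generator gcd argument, followed by the clearing-denominators trick to upgrade to an algebraic integer. Note, however, that the paper does not actually prove this statement; Theorem~1.1 is quoted as background from \cite[Theorem~2.2]{IanStewart2002} with no proof given, so there is nothing in the paper to compare against beyond observing that your argument is the classical one found in that reference.
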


\begin{theorem}[{\cite[Theorem 2.16]{IanStewart2002}}]\th\label{theorem 1.2}
A number field $K$ always has an integral basis. Further, the additive group of $O_{K}$ is free abelian, having rank equal to the degree of $K$.
\end{theorem}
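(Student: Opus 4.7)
The plan is to establish both existence of an integral basis and the freeness/rank statement in one stroke, by sandwiching $O_{K}$ between two free $\mathbb{Z}$-modules of the same rank $n := [K:\mathbb{Q}]$ and invoking the fact that a subgroup of a free abelian group of finite rank is free of rank at most that rank.

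First I would invoke \th\ref{theorem 1.1} to write $K = \mathbb{Q}(\theta)$ for some algebraic integer $\theta$. Then $\{1, \theta, \theta^{2}, \ldots, \theta^{n-1}\}$ is a $\mathbb{Q}$-basis of $K$ lying entirely in $O_{K}$, so the $\mathbb{Z}$-span $M := \mathbb{Z} + \mathbb{Z}\theta + \cdots + \mathbb{Z}\theta^{n-1}$ is a free abelian subgroup of $O_{K}$ of rank $n$; this gives the easy lower bound.

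The main work is the reverse containment: I would produce a free $\mathbb{Z}$-module of rank $n$ that contains $O_{K}$. For this I use the trace bilinear form $T(x,y) := \mathrm{Tr}_{K/\mathbb{Q}}(xy)$ on $K$. Two facts are needed: (i) $T$ is non-degenerate on $K$, which follows from separability of $K/\mathbb{Q}$ (automatic in characteristic zero); (ii) $\mathrm{Tr}_{K/\mathbb{Q}}(\alpha) \in \mathbb{Z}$ whenever $\alpha \in O_{K}$, since the trace is (up to sign) a coefficient of the characteristic polynomial of multiplication-by-$\alpha$, which is a power of the minimal polynomial of $\alpha$ over $\mathbb{Q}$ and hence lies in $\mathbb{Z}[x]$ for $\alpha \in O_{K}$. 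By (i), there is a dual basis $\{\beta_{1}, \ldots, \beta_{n}\}$ of $K$ with $T(\theta^{i-1}, \beta_{j}) = \delta_{ij}$. For any $\alpha \in O_{K}$, expand $\alpha = \sum_{j} c_{j}\beta_{j}$ with $c_{j} \in \mathbb{Q}$; then $c_{i} = T(\alpha, \theta^{i-1}) = \mathrm{Tr}_{K/\mathbb{Q}}(\alpha\theta^{i-1})$, and because $\alpha\theta^{i-1} \in O_{K}$, fact (ii) gives $c_{i} \in \mathbb{Z}$. Therefore $O_{K} \subseteq N := \mathbb{Z}\beta_{1} + \cdots + \mathbb{Z}\beta_{n}$, a free abelian group of rank $n$.

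To finish, since $O_{K}$ is a subgroup of the free abelian group $N$ of rank $n$, the standard structure theorem forces $O_{K}$ to be free of some rank $r \leq n$; but $O_{K} \supseteq M$ already has rank $n$, so $r = n$. Any $\mathbb{Z}$-basis of $O_{K}$ is then, by definition, an integral basis of $K$. The principal obstacle is the arithmetic input (ii) — integrality of the trace on $O_{K}$ — which in turn rests on showing that the minimal polynomial over $\mathbb{Q}$ of an algebraic integer has coefficients in $\mathbb{Z}$ (a Gauss-lemma style argument); everything else is linear algebra over $\mathbb{Q}$ together with the freeness of subgroups of $\mathbb{Z}^{n}$.
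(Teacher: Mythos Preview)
Your argument is correct and is in fact the standard trace-form/dual-basis proof one finds in most algebraic number theory texts. Note, however, that the paper itself does not prove \th\ref{theorem 1.2}: it is quoted as a background result with a citation to \cite[Theorem 2.16]{IanStewart2002}, so there is no in-paper proof to compare against. What you have written is essentially the proof given in the cited source (Stewart--Tall), so in that sense your approach matches the intended one exactly.
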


\begin{theorem}[{\cite[Proposition 3.1]{IanStewart2002}}]\th\label{theorem 1.3}
Every quadratic field is of the form $\mathbb{Q}(\sqrt{d})$ for some square-free rational integer $d$.
\end{theorem}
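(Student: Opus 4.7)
The plan is to invoke \th\ref{theorem 1.1} to write $K=\mathbb{Q}(\theta)$ for some algebraic integer $\theta$, and then use the degree constraint $[K:\mathbb{Q}]=2$ to reduce $\theta$ to the form $\sqrt{d}$ for a square-free $d\in\mathbb{Z}$ via the quadratic formula and an elementary reduction on rationals under the square root.

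First I would set $K=\mathbb{Q}(\theta)$ with $\theta$ an algebraic integer whose minimal polynomial over $\mathbb{Q}$ is necessarily monic of degree $2$, say $x^{2}+ax+b$ with $a,b\in\mathbb{Z}$ (integrality of $\theta$ forces $a,b\in\mathbb{Z}$, though for the statement it suffices to take them in $\mathbb{Q}$). The quadratic formula then gives $\theta = \tfrac{-a \pm \sqrt{a^{2}-4b}}{2}$, so $\mathbb{Q}(\theta)=\mathbb{Q}(\sqrt{a^{2}-4b})$. Setting $r:=a^{2}-4b\in\mathbb{Z}$, we have $K=\mathbb{Q}(\sqrt{r})$ with $r\in\mathbb{Z}$; note $r$ is not a perfect square, since otherwise $K=\mathbb{Q}$, contradicting $[K:\mathbb{Q}]=2$.

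Next I would do the square-free reduction on $r$. Write $r = \varepsilon\, p_{1}^{e_{1}} p_{2}^{e_{2}} \cdots p_{k}^{e_{k}}$ with $\varepsilon \in \{\pm 1\}$ and distinct primes $p_{i}$, and split each exponent as $e_{i}=2f_{i}+r_{i}$ with $r_{i}\in\{0,1\}$. Then $r = c^{2} d$ where $c = p_{1}^{f_{1}}\cdots p_{k}^{f_{k}} \in \mathbb{Z}$ and $d = \varepsilon\, p_{1}^{r_{1}}\cdots p_{k}^{r_{k}} \in \mathbb{Z}$ is square-free. Hence $\sqrt{r}=c\sqrt{d}$, and since $c\in\mathbb{Q}^{\times}$ we obtain $K=\mathbb{Q}(\sqrt{r})=\mathbb{Q}(\sqrt{d})$, giving the desired form. (If one instead started from $a,b\in\mathbb{Q}$, the same reduction after clearing denominators would still land in $\mathbb{Z}$, since $\mathbb{Q}(\sqrt{p/q})=\mathbb{Q}(\sqrt{pq})$.)

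There is no real obstacle here; the statement is essentially an unwinding of the quadratic formula combined with the square-free factorisation of an integer. The only subtle point worth emphasising is the non-triviality argument $r$ is not a square in $\mathbb{Q}$, which ensures $d\neq 1$ and the resulting extension genuinely has degree $2$; this is where the hypothesis that $K$ is quadratic (rather than merely a subfield of a quadratic field) is used.
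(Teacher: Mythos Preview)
Your argument is correct and is precisely the standard textbook proof (indeed, the one given in the cited reference \cite{IanStewart2002}). Note, however, that the present paper does not itself supply a proof of this statement: \th\ref{theorem 1.3} is merely quoted from \cite[Proposition~3.1]{IanStewart2002} as background, so there is no in-paper proof to compare against.
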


\begin{theorem}[{\cite[Theorem 3.2]{IanStewart2002}}]\th\label{theorem 1.4}
Let $K = \mathbb{Q}(\sqrt{d})$ be a quadratic field. Then
\begin{itemize}
\item[(i)] $O_{K} = \mathbb{Z}[\sqrt{d}]$ when $d \not\equiv 1 \hspace{0.1cm} (\text{mod} \hspace{0.1cm} 4)$,
\item[(ii)] $O_{K} = \mathbb{Z}[\frac{1 + \sqrt{d}}{2}]$ when $d \equiv 1 \hspace{0.1cm} (\text{mod} \hspace{0.1cm} 4)$.
\end{itemize}
\end{theorem}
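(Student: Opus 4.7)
The plan is to determine $O_K$ by directly testing, for an arbitrary $\alpha = a + b\sqrt{d} \in K$ with $a,b \in \mathbb{Q}$, exactly when $\alpha$ is an algebraic integer, and then reading off which elements generate $O_K$ as a ring. I will use the criterion that $\alpha \in K$ lies in $O_K$ iff its minimal polynomial over $\mathbb{Q}$ has coefficients in $\mathbb{Z}$.

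First I would split into the trivial case $b=0$ (where $\alpha \in \mathbb{Q} \cap \mathbb{B} = \mathbb{Z}$) and the case $b \neq 0$. In the latter case the minimal polynomial of $\alpha$ is $x^2 - 2a\,x + (a^2 - db^2)$, so the integrality conditions reduce to
\[
2a \in \mathbb{Z}, \qquad a^2 - db^2 \in \mathbb{Z}.
\]
Writing $a = m/2$ with $m \in \mathbb{Z}$ and $b = s/t$ in lowest terms ($t>0$), the second condition forces $4db^2 \in \mathbb{Z}$, hence $t^2 \mid 4ds^2$. Since $\gcd(s,t) = 1$ and $d$ is square-free, this yields $t^2 \mid 4$, so $t \in \{1,2\}$ and therefore $b = n/2$ for some $n \in \mathbb{Z}$. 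This preliminary denominator analysis (using square-freeness of $d$ in an essential way) is the step I expect to require the most care, because it is where one must argue that $2b$, not just $2a$, is forced to be an integer.

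Next I would substitute $a=m/2$, $b=n/2$ into $a^2 - db^2 \in \mathbb{Z}$ to obtain the congruence
\[
m^2 \equiv d\,n^2 \pmod 4.
\]
Using that squares mod $4$ are $0$ or $1$, I would perform a case analysis on $d \bmod 4$ (recalling $d$ is square-free, so $d \bmod 4 \in \{1,2,3\}$):
\begin{itemize}
\item[(i)] If $d \not\equiv 1 \pmod 4$, i.e.\ $d \equiv 2$ or $3 \pmod 4$, checking each residue shows $m$ and $n$ must both be even, so $a,b \in \mathbb{Z}$, giving $\alpha \in \mathbb{Z}[\sqrt{d}]$.
\item[(ii)] If $d \equiv 1 \pmod 4$, the congruence becomes $m^2 \equiv n^2 \pmod 4$, so $m,n$ share the same parity; when both odd, one writes $\alpha = \tfrac{m-n}{2} + n\cdot\tfrac{1+\sqrt{d}}{2}$, exhibiting $\alpha \in \mathbb{Z}\bigl[\tfrac{1+\sqrt{d}}{2}\bigr]$, while the both-even case is immediate.
\end{itemize}

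Finally I would verify the reverse containments. For (i), $\sqrt{d}$ is a root of $x^2 - d \in \mathbb{Z}[x]$, so $\mathbb{Z}[\sqrt{d}] \subseteq O_K$. For (ii), when $d \equiv 1 \pmod 4$ the element $\tfrac{1+\sqrt{d}}{2}$ satisfies $x^2 - x + \tfrac{1-d}{4} \in \mathbb{Z}[x]$, hence lies in $O_K$, and since $O_K$ is a ring the inclusion $\mathbb{Z}\bigl[\tfrac{1+\sqrt{d}}{2}\bigr] \subseteq O_K$ follows. Combined with the forward containments this yields the two stated descriptions. Theorem~\thref{theorem 1.2} is not strictly needed for this argument, but it justifies a posteriori that $\{1,\sqrt{d}\}$ in case (i) and $\{1, \tfrac{1+\sqrt{d}}{2}\}$ in case (ii) are integral bases.
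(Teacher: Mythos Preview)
Your proof is correct and follows the standard textbook argument (essentially the one in Stewart--Tall, from which the result is cited). Note, however, that the paper itself does not prove this theorem: it is stated in the introduction as a background result from \cite{IanStewart2002} and is used without proof, so there is no ``paper's own proof'' to compare against.
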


\begin{theorem}[{\cite[Theorem 3.3]{IanStewart2002}}]\th\label{theorem 1.5}
Let $K = \mathbb{Q}(\sqrt{d})$ be a quadratic field. Then $\mathcal{B}$ is an integral basis of $K$, where
\begin{itemize}
\item[(a)] $\mathcal{B} = \{1, \sqrt{d}\}$ when $d \not\equiv 1 \hspace{0.1cm} (\text{mod} \hspace{0.1cm} 4)$.
\item[(b)] $\mathcal{B} = \{1, \frac{1 + \sqrt{d}}{2}\}$ when $d \equiv 1 \hspace{0.1cm} (\text{mod} \hspace{0.1cm} 4)$.
\end{itemize}
\end{theorem}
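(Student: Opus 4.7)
The plan is to deduce Theorem 1.5 as an immediate consequence of Theorem 1.4 together with the standard observation that $\mathbb{Z}[\alpha]$, for a quadratic algebraic integer $\alpha$, is a free $\mathbb{Z}$-module of rank $2$ on the basis $\{1, \alpha\}$. Theorem 1.2 already guarantees that an integral basis of $K$ exists and has cardinality $[K:\mathbb{Q}] = 2$, so the task reduces to showing that the proposed two-element set $\mathcal{B}$ $\mathbb{Z}$-spans $O_{K}$ and is $\mathbb{Z}$-linearly independent. The linear independence is uniform across both cases: since $d$ is square-free and $d \neq 1$, $\sqrt{d}$ is irrational, hence $\{1, \sqrt{d}\}$ is $\mathbb{Q}$-linearly independent, and so is the set $\{1, \frac{1+\sqrt{d}}{2}\}$ obtained from it by an invertible $\mathbb{Q}$-linear transformation; $\mathbb{Z}$-linear independence follows a fortiori.

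For part (a), Theorem 1.4(i) identifies $O_K$ with $\mathbb{Z}[\sqrt{d}]$. Every element of $\mathbb{Z}[\sqrt{d}]$ is a polynomial in $\sqrt{d}$ with integer coefficients, and since $\sqrt{d}$ satisfies the monic relation $(\sqrt{d})^{2} = d \in \mathbb{Z}$, all higher powers of $\sqrt{d}$ reduce to $\mathbb{Z}$-combinations of $1$ and $\sqrt{d}$. Hence $\{1, \sqrt{d}\}$ $\mathbb{Z}$-spans $O_K$, and together with linear independence this makes it an integral basis.

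For part (b), set $\theta = \frac{1+\sqrt{d}}{2}$. Theorem 1.4(ii) identifies $O_K$ with $\mathbb{Z}[\theta]$. A direct computation shows that $\theta$ is a root of the monic polynomial
\begin{equation*}
x^{2} - x + \frac{1-d}{4} \in \mathbb{Z}[x],
\end{equation*}
where the constant term lies in $\mathbb{Z}$ precisely because $d \equiv 1 \pmod{4}$. Consequently $\theta^{2} = \theta - \frac{1-d}{4}$, which is already a $\mathbb{Z}$-combination of $1$ and $\theta$; by induction every $\theta^{k}$ reduces similarly, so $\mathbb{Z}[\theta]$ is $\mathbb{Z}$-spanned by $\{1, \theta\}$. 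Combined with the independence established above, this gives the claimed integral basis.

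The argument is essentially routine once Theorem 1.4 is in hand; the only conceptual point that needs care is the reduction step in part (b), where the hypothesis $d \equiv 1 \pmod{4}$ is used crucially to ensure that $\frac{1-d}{4}$ is an integer so that the minimal polynomial of $\theta$ has integer coefficients. Without this, one would only obtain a $\mathbb{Q}$-basis rather than an integral basis, which is precisely why the two cases of Theorem 1.4 (and therefore of Theorem 1.5) must be separated.
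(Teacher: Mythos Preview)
Your argument is correct: once Theorem~1.4 identifies $O_{K}$ as $\mathbb{Z}[\sqrt{d}]$ or $\mathbb{Z}[\theta]$ with $\theta = \frac{1+\sqrt{d}}{2}$, the monic quadratic relation satisfied by the generator (namely $x^{2}-d$ in case~(a) and $x^{2}-x+\frac{1-d}{4}$ in case~(b), the latter having integer coefficients precisely because $d\equiv 1\pmod 4$) collapses $\mathbb{Z}[\alpha]$ to the $\mathbb{Z}$-span of $\{1,\alpha\}$, and $\mathbb{Q}$-linear independence of $\{1,\sqrt{d}\}$ supplies the needed $\mathbb{Z}$-linear independence in both cases.

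There is, however, nothing to compare against: the paper does not prove Theorem~1.5 at all. It is quoted verbatim from Stewart--Tall as a background result, exactly like Theorems~1.1--1.8, and is used later only as a black box (e.g.\ in the proof of Theorem~3.2) to supply the integral basis of a quadratic field. Your derivation from Theorem~1.4 is the standard textbook route and is perfectly sound, but it goes beyond what the paper itself undertakes.
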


\begin{theorem}[{\cite[Lemma 3.4]{IanStewart2002}}]\th\label{theorem 1.6}
If $p$ is an odd prime and $\zeta$ is a primitive $p^{\text{th}}$ root of unity, then the minimal polynomial of $\zeta$ over $\mathbb{Q}$ is $\phi_{p}(x) = x^{p-1} + x^{p-2} + ... + 1$. Hence $[\mathbb{Q}(\zeta):Q] = p-1$.
\end{theorem}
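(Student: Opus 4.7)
The plan is to prove this by exhibiting the claimed polynomial as a polynomial having $\zeta$ as a root, and then showing that this polynomial is irreducible over $\mathbb{Q}$; the usual tool for the second step is Eisenstein's criterion applied after a linear change of variable.

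First, I would observe that $\zeta^p = 1$, so $\zeta$ is a root of $x^p-1$. Factoring gives $x^p-1 = (x-1)(x^{p-1}+x^{p-2}+\cdots+1) = (x-1)\phi_p(x)$. Since $\zeta$ is primitive and $p\ge 3$, we have $\zeta\ne 1$, so $\zeta$ must be a root of $\phi_p(x)$. In particular, the minimal polynomial of $\zeta$ divides $\phi_p(x)$ in $\mathbb{Q}[x]$.

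Next, I would show that $\phi_p$ is irreducible over $\mathbb{Q}$. The standard trick is to study $\phi_p(x+1)$ in place of $\phi_p(x)$, which is legitimate because $\phi_p(x)$ is irreducible iff $\phi_p(x+1)$ is. Using the factorization above with $x$ replaced by $x+1$,
\begin{equation*}
\phi_p(x+1) \;=\; \frac{(x+1)^p - 1}{(x+1)-1} \;=\; \frac{1}{x}\sum_{k=1}^{p}\binom{p}{k}x^{k} \;=\; \sum_{k=1}^{p}\binom{p}{k}x^{k-1}.
\end{equation*}
This is a monic polynomial of degree $p-1$ whose non-leading coefficients are $\binom{p}{k}$ for $1\le k\le p-1$, each divisible by $p$ (a classical consequence of $p$ being prime), while the constant term is $\binom{p}{1}=p$, which is not divisible by $p^2$. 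Eisenstein's criterion at the prime $p$ then applies and shows $\phi_p(x+1)$ is irreducible over $\mathbb{Q}$, hence so is $\phi_p(x)$.

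Finally, $\phi_p$ is monic, irreducible, and has $\zeta$ as a root, so it is the minimal polynomial of $\zeta$ over $\mathbb{Q}$. The degree of the extension $[\mathbb{Q}(\zeta):\mathbb{Q}]$ equals the degree of this minimal polynomial, namely $p-1$. The main obstacle in this argument is the irreducibility step; the shift $x\mapsto x+1$ and the divisibility facts about the binomial coefficients $\binom{p}{k}$ are what make Eisenstein applicable, and everything else is routine.
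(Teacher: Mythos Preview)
Your proof is correct and is precisely the standard argument (indeed, the one given in the cited reference). The paper itself does not supply a proof of this statement; it merely quotes it as a known result from \cite{IanStewart2002}, so there is nothing further to compare.
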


\begin{theorem}[{\cite[Theorem 3.5]{IanStewart2002}}]\th\label{theorem 1.7}
Let $p$ be an odd prime and $K = \mathbb{Q}(\zeta)$ be a $p^{\text{th}}$ cyclotomic field. Then $O_{K} = \mathbb{Z}[\zeta]$. Hence, $\{1, \zeta, \zeta^{2}, ..., \zeta^{p-2}\}$ is an integral basis of $K = \mathbb{Q}(\zeta)$.
\end{theorem}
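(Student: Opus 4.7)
The plan is to prove $O_{K} = \mathbb{Z}[\zeta]$ by establishing both inclusions. The inclusion $\mathbb{Z}[\zeta] \subseteq O_{K}$ is immediate: since $\zeta$ is a root of the monic polynomial $x^{p} - 1 \in \mathbb{Z}[x]$, it is an algebraic integer, hence so is every element of $\mathbb{Z}[\zeta]$, and all such elements lie in $K$. The substantive content is the reverse inclusion $O_{K} \subseteq \mathbb{Z}[\zeta]$, which I would attack via a discriminant-plus-local-analysis argument; the rank-$(p-1)$ claim about the integral basis is then automatic from \thref{theorem 1.2} together with \thref{theorem 1.6}.

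For the discriminant step, I would first use \thref{theorem 1.6} to recognize $\{1, \zeta, \zeta^{2}, \ldots, \zeta^{p-2}\}$ as a $\mathbb{Q}$-basis of $K$, and then compute $\mathrm{disc}(1, \zeta, \ldots, \zeta^{p-2}) = \mathrm{disc}(\phi_{p})$. Writing $\phi_{p}(x) = (x^{p} - 1)/(x - 1)$ and using $\zeta^{p} = 1$ together with $\phi_{p}(1) = p$, a short calculation via $N_{K/\mathbb{Q}}(\phi_{p}'(\zeta))$ yields $\mathrm{disc}(\phi_{p}) = \pm p^{p-2}$. Because $\mathrm{disc}(1, \zeta, \ldots, \zeta^{p-2}) = [O_{K} : \mathbb{Z}[\zeta]]^{2} \cdot d_{K}$, where $d_{K}$ is the discriminant of a true integral basis (whose existence is guaranteed by \thref{theorem 1.2}), the index $m := [O_{K} : \mathbb{Z}[\zeta]]$ must be a power of $p$. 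It therefore suffices to rule out $p \mid m$.

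This last step is the main obstacle, and the standard tool is the element $\lambda := 1 - \zeta$. One first verifies $\prod_{k=1}^{p-1}(1 - \zeta^{k}) = \phi_{p}(1) = p$, and that each ratio $(1-\zeta^{k})/(1-\zeta) = 1 + \zeta + \cdots + \zeta^{k-1}$ is a unit in $\mathbb{Z}[\zeta]$ (its inverse arising symmetrically by writing $\zeta$ as a power of $\zeta^{k}$), so $p$ and $\lambda^{p-1}$ generate the same ideal. Suppose toward contradiction that $\alpha \in O_{K}$ can be written $\alpha = (c_{0} + c_{1}\zeta + \cdots + c_{p-2}\zeta^{p-2})/p$ with some $c_{i} \not\equiv 0 \pmod{p}$. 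Using the trace values $\mathrm{Tr}_{K/\mathbb{Q}}(1) = p-1$ and $\mathrm{Tr}_{K/\mathbb{Q}}(\zeta^{k}) = -1$ for $1 \le k \le p-1$, one computes $\mathrm{Tr}_{K/\mathbb{Q}}(\lambda \alpha) \in \mathbb{Z}$ and more generally $\mathrm{Tr}_{K/\mathbb{Q}}(\zeta^{-j} \lambda \alpha) \in \mathbb{Z}$ for each $j$; these congruences, when unwound, force each $c_{i}$ to be divisible by $p$, contradicting the assumption. Hence $p \nmid m$, and combined with the previous paragraph, $m = 1$.

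Putting the two halves together gives $O_{K} = \mathbb{Z}[\zeta]$, from which the integral-basis assertion follows since $\{1, \zeta, \ldots, \zeta^{p-2}\}$ already spans $\mathbb{Z}[\zeta]$ as a $\mathbb{Z}$-module with the right rank $p-1 = [K:\mathbb{Q}]$ supplied by \thref{theorem 1.6}. I expect the discriminant computation to be routine bookkeeping once $\phi_{p}'(\zeta)$ is simplified using $\zeta^{p}=1$; the genuine difficulty lies in the clearing-of-denominators argument via $\lambda$ and traces, which is where all the arithmetic specific to cyclotomic fields really enters.
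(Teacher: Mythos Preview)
The paper does not actually prove this theorem: it is stated in the introduction as a background result, with a citation to \cite[Theorem 3.5]{IanStewart2002}, and no proof is given. Your proposal is correct and is essentially the standard textbook argument (discriminant $\pm p^{p-2}$ forces the index $[O_{K}:\mathbb{Z}[\zeta]]$ to be a $p$-power, then the totally ramified element $\lambda = 1-\zeta$ together with trace computations rules out $p$ dividing that index), which is precisely the route taken in the cited reference.
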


\begin{theorem}[{\cite[Chapter 2]{Marcus2018}}]\th\label{theorem 1.8}
Let $m \in \mathbb{N}$ and $K = \mathbb{Q}(\zeta)$ be an $m^{\text{th}}$ cyclotomic field, then $O_{K} = \mathbb{Z}[\zeta]$.
\end{theorem}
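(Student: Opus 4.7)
The plan is to prove this in two stages: first establish the prime-power case $m = p^k$ directly by a discriminant/ramification argument, then bootstrap to arbitrary $m$ via a compositum lemma together with induction on the number of distinct prime factors.

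The easy inclusion $\mathbb{Z}[\zeta] \subseteq O_K$ is immediate: $\zeta$ is a root of $x^m - 1 \in \mathbb{Z}[x]$, hence an algebraic integer, and $O_K$ is a ring containing $\mathbb{Z}$ and $\zeta$. For the reverse inclusion when $m = p^k$ is a prime power, I would first record that the minimal polynomial of $\zeta$ is $\Phi_{p^k}(x) = \Phi_p(x^{p^{k-1}})$, of degree $\phi(p^k) = p^{k-1}(p-1)$. The key numerical input is the computation $\mathrm{disc}(1, \zeta, \ldots, \zeta^{\phi(p^k)-1}) = \pm p^{c}$ for an explicit exponent $c$, obtained from the resultant $\mathrm{Res}(\Phi_{p^k}, \Phi_{p^k}')$ and the norm formula $N_{K/\mathbb{Q}}(1-\zeta) = p$. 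Since the discriminant of any $\mathbb{Z}$-submodule of $O_K$ of full rank containing $\mathbb{Z}[\zeta]$ must divide $\mathrm{disc}(\mathbb{Z}[\zeta])$, we obtain $[O_K : \mathbb{Z}[\zeta]]^2 \mid p^c$, so denominators of elements of $O_K$ written in the basis $\{1, \zeta, \ldots, \zeta^{\phi(p^k)-1}\}$ are powers of $p$. I would then rule out a nontrivial $p$-denominator by the standard argument: the element $\lambda := 1 - \zeta$ satisfies $(\lambda)^{\phi(p^k)} = (p)$ in $\mathbb{Z}[\zeta]$ (so $\lambda$ generates the unique prime of $\mathbb{Z}[\zeta]$ above $p$), and a trace computation shows $\mathrm{Tr}(\alpha \zeta^{-j})/p \in \mathbb{Z}$ for each $j$ whenever $p\alpha \in \mathbb{Z}[\zeta]$, forcing $\alpha \in \mathbb{Z}[\zeta]$.

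For general $m$, I would write $m = m_1 m_2$ with $\gcd(m_1, m_2) = 1$ and $m_1$ a prime power, and let $K_i = \mathbb{Q}(\zeta_{m_i})$ where $\zeta_{m_i}$ is a primitive $m_i^{\text{th}}$ root of unity chosen so that $\zeta = \zeta_{m_1}\zeta_{m_2}$. By the inductive hypothesis $O_{K_i} = \mathbb{Z}[\zeta_{m_i}]$, and the degrees satisfy $[K_1 K_2 : \mathbb{Q}] = \phi(m_1)\phi(m_2) = \phi(m) = [K_1:\mathbb{Q}][K_2:\mathbb{Q}]$, so $K = K_1 K_2$. The discriminants $\mathrm{disc}(O_{K_i})$ are (from the prime-power step) powers of the primes dividing $m_i$ respectively, hence coprime. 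Invoking the standard compositum lemma, which says that when two number fields have coprime discriminants and linearly disjoint extensions, their compositum's ring of integers equals the product of the individual rings, yields $O_K = O_{K_1} O_{K_2} = \mathbb{Z}[\zeta_{m_1}, \zeta_{m_2}] = \mathbb{Z}[\zeta]$.

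The main obstacle, as always in this classical result, is the prime-power base case—specifically the discriminant computation and the accompanying $\lambda$-adic ramification argument that no denominator of $p$ can survive; once that is in hand, the induction via the compositum lemma is essentially formal. In practice, since this is a well-documented theorem of Marcus, I would simply cite it and only sketch the strategy above for the reader's orientation.
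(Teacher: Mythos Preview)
The paper does not prove this statement at all: Theorem~1.8 is stated as a background result with a citation to \cite[Chapter~2]{Marcus2018} and no proof is given. Your sketch is a correct outline of the classical argument (and is in fact essentially the proof one finds in Marcus), so your closing remark that one would ``simply cite it'' is precisely what the paper does.
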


\subsection{Some Useful Results on \texorpdfstring{$(\sigma, \tau)$}{Lg}-Derivations of Commutative \\ Algebras}\label{subsection 2.3}
$\mathcal{A}$, in this section, denotes a commutative algebra with unity $1$ over a commutative ring $R$ having unity $1$ and $\sigma$, $\tau$ are two different non-zero unital $R$-algebra endomorphisms of $\mathcal{A}$. First, we have an important lemma below.

\begin{lemma}\th\label{lemma 2.1}
Let $\mathcal{A}$ be of finite rank $n$ as an $R$-module and let $\{\alpha_{1}, \alpha_{2}, ..., \alpha_{n}\}$ be an $R$-basis of $\mathcal{A}$. Then an $R$-linear map $D:\mathcal{A} \rightarrow \mathcal{A}$ is a $(\sigma, \tau)$-derivation if and only if $$D(\alpha_{i} \alpha_{j}) = D(\alpha_{i}) \tau(\alpha_{j}) + \sigma(\alpha_{i}) D(\alpha_{j})$$ for every $i, j \in \{1, 2, ..., n\}$.
\end{lemma}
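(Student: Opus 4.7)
The proof is essentially a bilinearity/bookkeeping argument, so the plan is straightforward. One direction is immediate: if $D$ is a $(\sigma,\tau)$-derivation on all of $\mathcal{A}$, then in particular it satisfies the twisted Leibniz rule on every pair $(\alpha_i, \alpha_j)$ of basis elements.

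For the substantive direction, the plan is to take arbitrary $\alpha, \beta \in \mathcal{A}$, write them in the basis as $\alpha = \sum_{i=1}^{n} r_i \alpha_i$ and $\beta = \sum_{j=1}^{n} s_j \alpha_j$ with $r_i, s_j \in R$, and then expand $D(\alpha\beta)$ step by step. First I would use bilinearity of multiplication in $\mathcal{A}$ together with the fact that each $r_i, s_j$ lies in $R$ (so they are scalars commuting with everything) to write
\[
\alpha\beta \;=\; \sum_{i,j} r_i s_j \,\alpha_i \alpha_j.
\]
Then applying $R$-linearity of $D$ gives $D(\alpha\beta) = \sum_{i,j} r_i s_j\, D(\alpha_i\alpha_j)$. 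Substituting the hypothesis on each basis pair yields
\[
D(\alpha\beta) \;=\; \sum_{i,j} r_i s_j \bigl[D(\alpha_i)\tau(\alpha_j) + \sigma(\alpha_i) D(\alpha_j)\bigr].
\]

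Next I would split this sum into two pieces and refactor. Using that $\sigma$ and $\tau$ are $R$-algebra homomorphisms (hence $R$-linear) and that $D$ is $R$-linear, the coefficients $r_i, s_j$ can be pulled inside each factor:
\[
\sum_{i,j} r_i s_j\, D(\alpha_i)\tau(\alpha_j) \;=\; \Bigl(\sum_i r_i D(\alpha_i)\Bigr)\Bigl(\sum_j s_j \tau(\alpha_j)\Bigr) \;=\; D(\alpha)\,\tau(\beta),
\]
and similarly the other piece equals $\sigma(\alpha)\,D(\beta)$. Adding these produces the required identity $D(\alpha\beta) = D(\alpha)\tau(\beta) + \sigma(\alpha) D(\beta)$, completing the proof.

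The only mild subtlety (hardly an obstacle) is being careful that the scalars $r_i, s_j \in R$ can indeed be extracted past $D$, $\sigma$, and $\tau$: this is exactly where we need $\sigma$ and $\tau$ to be $R$-algebra endomorphisms and $D$ to be $R$-linear, and where commutativity of $\mathcal{A}$ is implicitly used when rearranging products of the form $r_i s_j \alpha_i \alpha_j$. No deeper machinery is required; the lemma is really a statement that the $(\sigma,\tau)$-Leibniz condition, being bi-$R$-linear in its two arguments, is determined by its values on a basis.
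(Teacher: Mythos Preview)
Your proposal is correct and follows essentially the same approach as the paper: expand arbitrary elements in the basis, use $R$-linearity of $D$, $\sigma$, $\tau$ to reduce to basis pairs, then refactor the resulting double sum. The paper's proof is slightly terser but identical in structure.
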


\begin{proof} Let $D:\mathcal{A} \rightarrow \mathcal{A}$ be an $R$-linear map, and $x, y \in \mathcal{A}$. Then $x = \sum_{i=1}^{n} a_{i} \alpha_{i}$ and $y = \sum_{j=1}^{n} b_{j} \alpha_{j}$ for some $a_{i}, b_{j} \in R$ ($i, j \in \{1, 2, ..., n\}$). Observe that $$xy = \left(\sum_{i=1}^{n} a_{i} \alpha_{i}\right) \left(\sum_{j=1}^{n} b_{j} \alpha_{j} \right) = \sum_{i, j} (a_{i}b_{j}) (\alpha_{i} \alpha_{j}),$$ and

\begin{eqnarray*}
D(xy) & = & \sum_{i,j=1}^{n} (a_{i}b_{j}) D(\alpha_{i} \alpha_{j}) = \sum_{i,j=1}^{n} (a_{i}b_{j}) (D(\alpha_{i}) \tau(\alpha_{j}) + \sigma(\alpha_{i}) D(\alpha_{j})) \\ & = & D \left(\sum_{i=1}^{n} a_{i}\alpha_{i}\right) \tau \left(\sum_{j=1}^{n} b_{j} \alpha_{j}\right) + \sigma \left(\sum_{i=1}^{n} a_{i}\alpha_{i}\right) D \left(\sum_{j=1}^{n} b_{j} \alpha_{j}\right) \\ & = & D(x) \tau(y) + \sigma(x) D(y). \end{eqnarray*}

Hence $D$ is a $(\sigma, \tau)$-derivation. The converse is straightforward.
\end{proof}

For a non-negative integer $k$, define $S_{k}$ as the set containing all ordered pairs $(i,j)$ ($i, j \in \mathbb{N} \cup \{0\}$) for which $i+j = k$. Note that the sets $S_{k} \text{'s}$ are pairwise disjoint. Therefore, $$|\cup_{i=0}^{m} S_{i}| = \sum_{i=0}^{m} |S_{i}| = 1 + 2 + 3 + ... + m + (m+1) = \frac{(m+1)(m+2)}{2}$$ for every non-negative integer $m$.

\begin{lemma}\th\label{lemma 2.2}
For a  $(\sigma, \tau)$-derivation $D$ of $\mathcal{A}$, $$D(\alpha^{k}) = \left(\sum_{(i,j) \in S_{k-1}}  \sigma(\alpha^{i}) \tau(\alpha^{j})\right)D(\alpha),$$ for every $\alpha \in \mathcal{A}$ and for every $k \in \mathbb{N}$.
\end{lemma}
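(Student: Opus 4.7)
The plan is to prove this by induction on $k \in \mathbb{N}$, using the defining property of a $(\sigma,\tau)$-derivation together with the fact that $\sigma$ and $\tau$ are multiplicative on powers of $\alpha$.

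For the base case $k=1$, the set $S_{0} = \{(0,0)\}$, so the right-hand side reads $\sigma(1)\tau(1)D(\alpha) = D(\alpha)$ since $\sigma$ and $\tau$ are unital, and this trivially matches $D(\alpha^{1})$.

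For the inductive step, assume the formula holds for some $k \geq 1$. I would write $\alpha^{k+1} = \alpha \cdot \alpha^{k}$ and apply the $(\sigma,\tau)$-derivation property to obtain
\[
D(\alpha^{k+1}) \;=\; D(\alpha)\tau(\alpha^{k}) + \sigma(\alpha) D(\alpha^{k}).
\]
Substituting the inductive hypothesis for $D(\alpha^{k})$ and pulling $\sigma(\alpha)$ inside the sum (using that $\sigma$ is multiplicative so $\sigma(\alpha)\sigma(\alpha^{i}) = \sigma(\alpha^{i+1})$), the expression becomes
\[
\Bigl(\tau(\alpha^{k}) + \sum_{(i,j)\in S_{k-1}} \sigma(\alpha^{i+1})\tau(\alpha^{j})\Bigr) D(\alpha).
\]

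The only nontrivial bookkeeping is the index reshuffling, which I expect to be the main (though modest) obstacle. Under the substitution $i' = i+1$, the pairs $(i,j) \in S_{k-1}$ (i.e. $i+j = k-1$) biject with the pairs $(i',j) \in S_{k}$ with $i' \geq 1$, which is precisely $S_{k} \setminus \{(0,k)\}$. The leading term $\tau(\alpha^{k})$ equals $\sigma(\alpha^{0})\tau(\alpha^{k})$ and supplies exactly the missing pair $(0,k)$. Combining, the sum re-assembles as $\sum_{(i,j)\in S_{k}} \sigma(\alpha^{i})\tau(\alpha^{j})$, yielding
\[
D(\alpha^{k+1}) = \Bigl(\sum_{(i,j)\in S_{k}} \sigma(\alpha^{i})\tau(\alpha^{j})\Bigr) D(\alpha),
\]
which is the desired formula with $k$ replaced by $k+1$. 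This closes the induction and completes the proof.
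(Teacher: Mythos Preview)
Your proof is correct and follows essentially the same inductive approach as the paper. The only cosmetic difference is that you split $\alpha^{k+1} = \alpha\cdot\alpha^{k}$ whereas the paper splits it as $\alpha^{k}\cdot\alpha$, so your reindexing tracks the missing pair $(0,k)$ rather than $(k,0)$; since $\mathcal{A}$ is commutative, this makes no difference.
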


\begin{proof} We use induction. For $k = 1$, the equality trivially holds. Let the result hold for $k=n$, that is, $D(\alpha^{n}) = \left(\sum_{(i,j) \in S_{n-1}}  \sigma(\alpha^{i}) \tau(\alpha^{j})\right)D(\alpha)$. Then

\begin{eqnarray*}
D(\alpha^{n+1}) = D(\alpha^{n} \alpha) & = & D(\alpha^{n}) \tau(\alpha) + \sigma(\alpha^{n}) D(\alpha) \\ & = & \left(\sum_{i+j = n-1} \sigma(\alpha^{i}) \tau(\alpha^{j+1}) + \sigma(\alpha^{n}) \tau(\alpha^{0})\right) D(\alpha) \\ & = & \left(\sum_{i+j = n} \sigma(\alpha^{i}) \tau(\alpha^{j})\right)D(\alpha) \\ & = & \left(\sum_{(i,j) \in S_{n}}  \sigma(\alpha^{i}) \tau(\alpha^{j})\right)D(\alpha).
\end{eqnarray*}

Induction is complete, and so is proof.
\end{proof}

The theorem below is now straightforward.

\begin{theorem}\th\label{theorem 2.3}
Let $\mathcal{A}$ be of finite rank $n$ and suppose that $\mathcal{A}$ has an $R$-basis of the form $\{1, \alpha, \alpha^{2}, ..., \alpha^{n-1}\}$ for some $\alpha \in \mathcal{A}$. If an $R$-linear map $D:\mathcal{A} \rightarrow \mathcal{A}$ is a $(\sigma, \tau)$-derivation, then \begin{equation}D(\alpha^{k}) = \left(\sum_{(i,j) \in S_{k-1}}  \sigma(\alpha^{i}) \tau(\alpha^{j})\right)D(\alpha)\end{equation} for all $k \in \{1, 2, ..., n-1\}$.
\end{theorem}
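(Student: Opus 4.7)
The plan is to obtain the theorem as an immediate corollary of \th\ref{lemma 2.2}. Indeed, \th\ref{lemma 2.2} already proves, for an arbitrary $(\sigma,\tau)$-derivation $D$ of $\mathcal{A}$ and any element $\alpha\in\mathcal{A}$, that
\[
D(\alpha^{k}) \;=\; \left(\sum_{(i,j)\in S_{k-1}} \sigma(\alpha^{i})\,\tau(\alpha^{j})\right) D(\alpha)
\]
holds for every $k\in\mathbb{N}$. So the first (and essentially only) step is to invoke \th\ref{lemma 2.2} with the specific element $\alpha$ featured in the hypothesized basis $\{1,\alpha,\alpha^{2},\ldots,\alpha^{n-1}\}$.

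The second step is simply to restrict the range of $k$. The lemma gives the identity for all positive integers, whereas the theorem asserts it only for $k\in\{1,2,\ldots,n-1\}$, which is a sub-range. So no extra work is required: one just notes that $\{1,2,\ldots,n-1\}\subset\mathbb{N}$ and quotes the lemma.

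It is worth observing that the hypothesis that $\{1,\alpha,\alpha^{2},\ldots,\alpha^{n-1}\}$ forms an $R$-basis plays no role in the argument itself; it is included because the formula, read in conjunction with the basis hypothesis, expresses the entire derivation $D$ in terms of the single value $D(\alpha)$ and the fixed endomorphisms $\sigma,\tau$. This is the content used downstream in Sections \ref{subsection 3.2} and \ref{subsection 3.3}, where $\mathcal{A}=\mathbb{Z}[\zeta]$ or $\mathbb{Z}[\sqrt{m},\sqrt{n}]$ admits such a power-type basis.

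There is no real obstacle: the theorem is purely a bookkeeping restatement of \th\ref{lemma 2.2} in the presence of a power basis, and the author already flags it as \enquote{straightforward}. The only mild subtlety, which I would highlight in one line of the write-up, is that \th\ref{lemma 2.2} is stated for $\alpha\in\mathcal{A}$ arbitrary while the theorem specializes to the distinguished generator $\alpha$ of the power basis; this specialization is what makes the displayed formula a useful structural tool rather than just a pointwise identity.
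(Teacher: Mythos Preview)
Your proposal is correct and matches the paper's approach exactly: the paper simply states that the theorem \enquote{is now straightforward} immediately after \th\ref{lemma 2.2}, meaning it is obtained by specializing that lemma to the basis element $\alpha$ and restricting $k$ to $\{1,\ldots,n-1\}$. Your additional remarks about the role of the power-basis hypothesis are accurate commentary but not part of the proof itself.
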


The converse of the \th\ref{theorem 2.3} is not true. Some examples are given below.\vspace{10pt}

\begin{example}
Let $R[X]$ be the polynomial ring in variable $X$ over a commutative unital ring $R$. Let $f(X) \in R[X]$ be monic with degree $n$ and $\mathcal{A} = \frac{R[X]}{\langle f(X) \rangle}$. Then $\mathcal{A}$ is a commutative $R$-algebra with unity $\bar{1} = 1 + f(X)$. Suppose $\alpha = X + f(X)$. Then $\{1, \alpha, \alpha^{2}, ..., \alpha^{n-1}\}$ is an $R$-basis of the $R$-module $\mathcal{A}$. We denote the zero element $0 + \langle f(X) \rangle$ of $\mathcal{A}$ by $\bar{0}$.

In particular, take $R = \mathbb{Z}$ and $f(X) = X^{4} - 1$ so that $\{\bar{1}, \alpha, \alpha^{2}, \alpha^{3}\}$ is a $\mathbb{Z}$-basis of $\mathcal{A} = \frac{R[X]}{\langle f(X) \rangle}$. Let $\sigma$ and $\tau$ be $\mathbb{Z}$-algebra endomorphisms of $\mathcal{A}$ given by $\sigma(\alpha) = \alpha$ and $\tau(\alpha) = \alpha^{2}$. Then $\sigma(\bar{1}) = \bar{1}$ and $\tau(\bar{1}) = \bar{1}$. Define $D:\mathcal{A} \rightarrow \mathcal{A}$ as a $\mathbb{Z}$-linear map with $D(\bar{1}) = \bar{0}$ and $$D(\alpha^{r}) = \left(\sum_{(i,j) \in S_{r-1}}  \sigma(\alpha^{i}) \tau(\alpha^{j})\right)D(\alpha), \hspace{0.1cm} \forall \hspace{0.1cm} r \in \{1, 2, 3\}.$$

$\alpha^{4} = \bar{1}$ so that $D(\alpha^{4}) = \bar{0}$. Also, 
\begin{eqnarray*} \left(\sum_{(i,j) \in S_{3}}  \sigma(\alpha^{i}) \tau(\alpha^{j})\right)D(\alpha) & = & \left(\sigma(\alpha^{3}) + \sigma(\alpha^{2}) \tau(\alpha) + \sigma(\alpha) \tau(\alpha^{2}) + \tau(\alpha^{3})\right)D(\alpha) \\ & = & \left(\alpha^{3} + \alpha^{4} + \alpha^{5}  + \alpha^{6}\right)D(\alpha) \\ & = & \left(\bar{1} + \alpha + \alpha^{2} + \alpha^{3}\right)D(\alpha).
\end{eqnarray*}
Now $\bar{1} + \alpha + \alpha^{2} + \alpha^{3} \neq \bar{0}$ since $\{\bar{1}, \alpha, \alpha^{2}, \alpha^{3}\}$ is a basis of $\mathcal{A}$. Also, $\mathcal{A}$ is an integral domain, so if $D(\alpha) \neq 0$, then the above expression cannot be zero. Further, observe that $$D(\alpha^{4}) \neq \left(\sum_{(i,j) \in S_{3}}  \sigma(\alpha^{i}) \tau(\alpha^{j})\right)D(\alpha).$$ Hence by \th\ref{lemma 2.2}, $D$ cannot be a $(\sigma, \tau)$-derivation of $\mathcal{A}$. 
\end{example}

\begin{example}
Consider the set $$\mathcal{A} = \{aI + bA + cA^{2} \mid a, b, c \in \mathbb{Z}\},$$ where $A = \begin{pmatrix}
0 & 0 & 1 \\
1 & 0 & 0 \\
0 & 1 & 0
\end{pmatrix}$. Then $\mathcal{A}$ is a commutative subalgebra of the $\mathbb{Z}$-algebra $M_{3}(\mathbb{Z})$ with unity $I$ and $\{I, A, A^{2}\}$ is a $\mathbb{Z}$-basis of $\mathcal{A}$. Let $\sigma$ and $\tau$ be $\mathbb{Z}$-algebra endomorphisms of $\mathcal{A}$ given by $\sigma(aI + bA + cA^{2}) = aI + bA + cA^{2}$ and $\tau(aI + bA + cA^{2}) = aI + bA^{2} + cA$ for all $a, b, c \in \mathbb{Z}$. Then $\sigma(I) = \tau(I) = I$. Define $D:\mathcal{A} \rightarrow \mathcal{A}$ as a $\mathbb{Z}$-linear map with $D(I) = 0$ and such that $$D(A^{r}) = \left(\sum_{(i,j) \in S_{r-1}} \sigma(A^{i}) \tau(A^{j})\right)D(A), \hspace{0.8cm} \text{for} \hspace{0.2cm} r = 1, 2.$$

Then $D(A^{3}) = D(I) = 0$. Also, 
\begin{eqnarray*}\left(\sum_{(i,j) \in S_{2}}  \sigma(A^{i}) \tau(A^{j})\right)D(A) & = & \left(\sigma(A^{2}) + \sigma(A) \tau(A) + \tau(A^{2})\right)D(A) = \left(I + A + A^{2}\right)D(A).\end{eqnarray*}
Again, $I + A + A^{2} \neq 0$, so if $D(A) \in \mathcal{A}$ is such that $\left(I + A + A^{2}\right)D(A) \neq 0$, for example, $D(A) = A$ or $A^{2}$, then $D(A^{3}) \neq \left(\sum_{(i,j) \in S_{2}}  \sigma(A^{i}) \tau(A^{j})\right)D(A)$. Hence, $D$ is not a $(\sigma, \tau)$-derivation. \end{example}

\begin{example}
Let $n \in \mathbb{N}$ and $A \in M_{n}(\mathbb{Z})$ be an idempotent matrix. Then the subset $$\mathcal{A} = \{aI + bA \mid a, b \in \mathbb{Z}\}$$ of $M_{n}(\mathbb{Z})$ is a commutative $\mathbb{Z}$-algebra with unity $I$ and $\{I, A\}$ is a basis of $\mathcal{A}$. Let $\sigma$ and $\tau$ be $\mathbb{Z}$-algebra endomorphisms of $\mathcal{A}$ given by $\sigma(aI + bA) = (a+b)I$ and $\tau(aI + bA) = aI + bA$ for all $a, b \in \mathbb{Z}$. Define $D:\mathcal{A} \rightarrow \mathcal{A}$ as a $\mathbb{Z}$-linear map with $D(I) = 0$ and such that $$D(A^{r}) = \left(\sum_{(i,j) \in S_{r-1}}  \sigma(A^{i}) \tau(A^{j})\right)D(A), \hspace{0.8cm} \text{for} \hspace{0.2cm} r = 1.$$ $$\left(\sum_{(i,j) \in S_{1}}  \sigma(A^{i}) \tau(A^{j})\right)D(A) = \left(\sigma(A) + \tau(A)\right)D(A) =  \left(I + A\right)D(A).$$
$D(A) \neq \left(I + A\right)D(A)$ provided $A D(A) \neq 0$. Hence, $D$ is not a $(\sigma, \tau)$-derivation of $\mathcal{A}$ provided $D(A)$ is chosen in $M_{n}(\mathbb{Z})$ such that $A D(A) \neq 0$.
\end{example}

\begin{example}
Let $n \in \mathbb{N}$ and $A \in M_{n}(\mathbb{Z})$ be a nilpotent matrix, that is, $A^{k} = 0$ for some least positive integer $k$. Then the subset $$\mathcal{A} = \{\sum_{i=0}^{k-1} a_{i} A^{i} \mid a_{i} \in \mathbb{Z}, \hspace{0.1cm} \forall \hspace{0.1cm} i \in \{0, 1, ..., k-1\}\}$$ of $M_{n}(\mathbb{Z})$ is a commutative $\mathbb{Z}$-algebra with unity $I$ and $\{A^{0}=I, A, ..., A^{k-1}\}$ forms a basis of the $\mathbb{Z}$-module $\mathcal{A}$. Let $\sigma$ and $\tau$ be $\mathbb{Z}$-algebra endomorphisms of $\mathcal{A}$ given by $\sigma(\sum_{i=0}^{k-1} a_{i} A^{i}) = \sum_{i=0}^{k-1} a_{i} A^{i}$ and $\tau(\sum_{i=0}^{k-1} a_{i} A^{i}) = \sum_{i=0}^{k-1} a_{i} B^{i}$ for all $a_{i} \in \mathbb{Z}$ $(0 \leq i \leq k-1),$ where $B = m A$ for some $m \in \mathbb{N}$ and $m \neq 1$. Define $D:\mathcal{A} \rightarrow \mathcal{A}$ as a $\mathbb{Z}$-linear map with $D(I) = 0$ and such that $$D(A^{r}) = \left(\sum_{(i,j) \in S_{r-1}}  \sigma(A^{i}) \tau(A^{j})\right)D(A), \hspace{0.8cm} \text{for} \hspace{0.2cm} r \in \{1, ..., k-1\}.$$

Then $D(A^{k}) = 0$. Also, \begin{eqnarray*}\left(\sum_{(i,j) \in S_{k-1}} \sigma(A^{i}) \tau(A^{j})\right)D(A) = \left(\sum_{i+j=k-1} A^{i} B^{j}\right)D(A) & = & \left(\sum_{j=0}^{k-1} m^{j} A^{k-1} \right)D(A) \\ & = & \left(\frac{1-m^{k}}{1-m}\right)A^{k-1}D(A).\end{eqnarray*}
$0 \neq \left(\frac{1-m^{k}}{1-m}\right)A^{k-1}D(A)$ provided $A^{k-1} D(A) \neq 0$. Hence, $D$ cannot be a $(\sigma, \tau)$-derivation of $\mathcal{A}$ provided $D(A)$ is chosen in $M_{n}(\mathbb{Z})$ such that $A^{k-1} D(A) \neq 0$.
\end{example}

\begin{example}
Let $C_{n} = \langle g \mid g^{n} = 1\rangle$ be a cyclic group having order $n \geqslant 2$ and consider the group ring $\mathbb{Z}C_{n}$. Let $\sigma$ and $\tau$ be $\mathbb{Z}$-algebra endomorphisms of $\mathbb{Z}G$ which are $\mathbb{Z}$-linear extensions of group homomorphisms of $G$, say, $\sigma(g) = 1$ and $\tau(g) = g$. Let $D:\mathbb{Z}C_{n} \rightarrow \mathbb{Z}C_{n}$ be a $\mathbb{Z}$-linear map with $D(1) = 0$ and $$D(g^{r}) = \left(\sum_{(i,j) \in S_{r-1}}  \sigma(g^{i}) \tau(g^{j})\right)D(g), \hspace{0.1cm} \forall \hspace{0.1cm} r \in \{1, ..., n-1\}.$$

So $D(g^{n}) = D(1) = 0$. Also,
\begin{equation*}
\begin{aligned}
\left(\sum_{(i,j) \in S_{n-1}}  \sigma(g^{i}) \tau(g^{j})\right)D(g) & = (\sigma(g^{n-1}) + \sigma(g^{n-2})\tau(g) + \sigma(g^{n-3})\tau(g^{2}) + ... \\ &\quad + \sigma(g^{2}) \tau(g^{n-3}) + \sigma(g) \tau(g^{n-2}) + \tau(g^{n-1}))D(g) 
\\ & = \left(1 + g + ... + g^{n-1}\right)D(g)
\end{aligned}
\end{equation*}
Note that $1 + g + ... + g^{n-1} \neq 0$ since $1, g, ..., g^{n-1}$ are linearly independent being basis elements of $\mathbb{Z}C_{n}$. So if $D(g) \in \mathbb{Z}C_{n}$ is such that $\left(1 + g + ... + g^{n-1}\right)D(g) \neq 0$, then $D(g^{n}) \neq \left(\sum_{(i,j) \in S_{n-1}}  \sigma(g^{i}) \tau(g^{j})\right)D(g)$. Hence, $D$ cannot be a $(\sigma, \tau)$-derivation of $\mathbb{Z}C_{n}$.
\end{example}

\begin{lemma}\th\label{lemma 2.4}
Let $\mathcal{A}$ be of finite rank $n$ and $\{\alpha_{1}, \alpha_{2}, ..., \alpha_{n}\}$ be an $R$-basis of $\mathcal{A}$. Let $D:\mathcal{A} \rightarrow \mathcal{A}$ be a $(\sigma, \tau)$-derivation. Suppose that there exists some $\beta \in \mathcal{A}$ such that $D(\alpha_{i}) = \beta (\tau - \sigma)(\alpha_{i})$ for all $i \in \{1, 2, ..., n\}$. Then $D(\alpha) = \beta (\tau - \sigma)(\alpha)$ for all $\alpha \in \mathcal{A}$.
\end{lemma}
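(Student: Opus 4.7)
The plan is to reduce the claim to the basis case using $R$-linearity. Any $\alpha \in \mathcal{A}$ can be written uniquely as $\alpha = \sum_{i=1}^{n} r_{i} \alpha_{i}$ with $r_{i} \in R$, so the strategy is to expand $D(\alpha)$ using the $R$-linearity of $D$, substitute the hypothesis on each $D(\alpha_{i})$, and then pull the sum back inside $\tau - \sigma$ using the $R$-linearity of $\sigma$ and $\tau$ (which holds because they are $R$-algebra endomorphisms).

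More concretely, I would first write
\[
D(\alpha) \;=\; D\!\left(\sum_{i=1}^{n} r_{i}\alpha_{i}\right) \;=\; \sum_{i=1}^{n} r_{i} D(\alpha_{i}) \;=\; \sum_{i=1}^{n} r_{i}\,\beta\,(\tau - \sigma)(\alpha_{i}).
\]
Since $\mathcal{A}$ is commutative and $\beta$ is a fixed element, $\beta$ can be factored out of the sum, yielding
\[
D(\alpha) \;=\; \beta \sum_{i=1}^{n} r_{i}(\tau - \sigma)(\alpha_{i}).
\]
Then I would use that $\sigma, \tau : \mathcal{A} \to \mathcal{A}$ are $R$-linear (so $\tau - \sigma$ is an $R$-module map) to collapse the sum:
\[
\sum_{i=1}^{n} r_{i}(\tau - \sigma)(\alpha_{i}) \;=\; (\tau - \sigma)\!\left(\sum_{i=1}^{n} r_{i}\alpha_{i}\right) \;=\; (\tau - \sigma)(\alpha),
\]
which gives the desired identity $D(\alpha) = \beta (\tau - \sigma)(\alpha)$.

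There is no real obstacle here; the lemma is essentially a formal consequence of $R$-linearity of the three maps $D$, $\sigma$, $\tau$ and the commutativity of $\mathcal{A}$. The only point worth flagging is that one must invoke the fact that $\sigma$ and $\tau$ are $R$-algebra (hence $R$-module) endomorphisms to move the scalars $r_{i}$ through $\tau - \sigma$; the hypothesis that $D$ is a $(\sigma,\tau)$-derivation itself is not even needed beyond the fact that $D$ is $R$-linear, which is built into the definition.
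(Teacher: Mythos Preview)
Your proof is correct and follows essentially the same approach as the paper: write $\alpha$ in the basis, use $R$-linearity of $D$ to reduce to the hypothesis on each $\alpha_i$, then use $R$-linearity of $\sigma$ and $\tau$ to recombine into $\beta(\tau-\sigma)(\alpha)$. Your observation that the $(\sigma,\tau)$-derivation property is not actually used beyond $R$-linearity is also accurate.
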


\begin{proof} For $\alpha = \sum_{i=1}^{n} a_{i} \alpha_{i} \in \mathcal{A}$, \begin{eqnarray*}D(\alpha) = \sum_{i=1}^{n} a_{i} D(\alpha_{i}) = \sum_{i=1}^{n} a_{i} \beta (\tau - \sigma)(\alpha_{i}) & = & \beta \left(\sum_{i=1}^{n} a_{i} \tau(\alpha_{i}) - \sum_{i=1}^{n} a_{i} \sigma(\alpha_{i})\right) \\ & = & \beta \left(\tau\left(\sum_{i=1}^{n} a_{i} \alpha_{i}\right) - \sigma\left(\sum_{i=1}^{n} a_{i} \alpha_{i}\right)\right) \\ & = & \beta (\tau - \sigma)(\alpha)\end{eqnarray*}

Since $\alpha \in \mathcal{A}$ is arbitrary, therefore, $D$ is inner.
\end{proof}

The theorem below now follows immediately.

\begin{theorem}\th\label{theorem 2.5}
Let $\mathcal{A}$ be of finite rank $n$ and $\{\alpha_{1}, \alpha_{2}, ..., \alpha_{n}\}$ be an $R$-basis of $\mathcal{A}$. Then a $(\sigma, \tau)$-derivation $D:\mathcal{A} \rightarrow \mathcal{A}$ is inner if and only if there exists some $\beta \in \mathcal{A}$ such that $D(\alpha_{i}) = \beta (\tau - \sigma)(\alpha_{i})$ for all $i \in \{1, 2, ..., n\}$.
\end{theorem}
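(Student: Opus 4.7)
The plan is to prove both directions by unpacking the definition of an inner $(\sigma,\tau)$-derivation and exploiting the commutativity of $\mathcal{A}$, with \th\ref{lemma 2.4} doing the heavy lifting in the nontrivial direction.

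For the forward direction, I would assume $D$ is inner, so that by definition there is some $\beta \in \mathcal{A}$ with $D(\alpha) = \beta\tau(\alpha) - \sigma(\alpha)\beta$ for every $\alpha \in \mathcal{A}$. Since $\mathcal{A}$ is commutative, $\sigma(\alpha)\beta = \beta\sigma(\alpha)$, and therefore
\[
D(\alpha) \;=\; \beta\tau(\alpha) - \beta\sigma(\alpha) \;=\; \beta(\tau-\sigma)(\alpha).
\]
Specializing $\alpha = \alpha_i$ yields the desired identity on the basis.

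For the backward direction, I would assume there exists $\beta \in \mathcal{A}$ with $D(\alpha_i) = \beta(\tau-\sigma)(\alpha_i)$ for each $i \in \{1,\ldots,n\}$. Applying \th\ref{lemma 2.4} directly gives $D(\alpha) = \beta(\tau-\sigma)(\alpha)$ for every $\alpha \in \mathcal{A}$. Using commutativity once more, rewrite
\[
\beta(\tau-\sigma)(\alpha) \;=\; \beta\tau(\alpha) - \beta\sigma(\alpha) \;=\; \beta\tau(\alpha) - \sigma(\alpha)\beta,
\]
which is precisely the definition of $D$ being an inner $(\sigma,\tau)$-derivation implemented by $\beta$.

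There is no real obstacle here: the theorem is a one-line corollary of \th\ref{lemma 2.4} once one observes that, because $\mathcal{A}$ is commutative, the inner-derivation formula $\beta\tau(\alpha)-\sigma(\alpha)\beta$ coincides with $\beta(\tau-\sigma)(\alpha)$. The only thing to be careful about is invoking commutativity explicitly in both directions, since this is exactly what allows the factoring out of $\beta$ on the left.
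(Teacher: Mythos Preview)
Your proof is correct and matches the paper's approach: the paper simply states that the theorem follows immediately from \th\ref{lemma 2.4}, and your argument is precisely the natural unpacking of that claim, with the forward direction coming straight from the definition of inner and the backward direction from \th\ref{lemma 2.4}.
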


\begin{lemma}\th\label{lemma 2.6}
Let $D: \mathcal{A} \rightarrow \mathcal{A}$ be a $(\sigma, \tau)$-derivation and $\alpha \in \mathcal{A}$. If there exists some $\beta \in \mathcal{A}$ such that $D(\alpha) = \beta (\tau - \sigma)(\alpha)$, then $$D(\alpha^{n}) = \beta (\tau - \sigma)(\alpha^{n})$$ for all $n \in \mathbb{N}$.
\end{lemma}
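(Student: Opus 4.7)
The plan is to proceed by induction on $n$, using only the defining $(\sigma, \tau)$-Leibniz rule together with the commutativity of $\mathcal{A}$ (which lets us slide $\beta$ past any image of $\sigma$). The base case $n = 1$ is the hypothesis itself, so nothing is required there.

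For the inductive step, I would assume the identity $D(\alpha^n) = \beta (\tau - \sigma)(\alpha^n)$ and compute $D(\alpha^{n+1})$ by expanding it as $D(\alpha^n \cdot \alpha)$ via the $(\sigma, \tau)$-derivation property:
\begin{equation*}
D(\alpha^{n+1}) = D(\alpha^n)\tau(\alpha) + \sigma(\alpha^n) D(\alpha).
\end{equation*}
Substituting the inductive hypothesis into the first summand and the given relation $D(\alpha) = \beta(\tau - \sigma)(\alpha)$ into the second, this becomes
\begin{equation*}
\beta\bigl(\tau(\alpha^n) - \sigma(\alpha^n)\bigr)\tau(\alpha) + \sigma(\alpha^n)\beta\bigl(\tau(\alpha) - \sigma(\alpha)\bigr).
\end{equation*}
Because $\mathcal{A}$ is commutative, $\sigma(\alpha^n)\beta = \beta\sigma(\alpha^n)$, and the two middle terms $\beta\sigma(\alpha^n)\tau(\alpha)$ cancel. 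Since $\sigma$ and $\tau$ are algebra homomorphisms, $\tau(\alpha^n)\tau(\alpha) = \tau(\alpha^{n+1})$ and $\sigma(\alpha^n)\sigma(\alpha) = \sigma(\alpha^{n+1})$, leaving $\beta(\tau - \sigma)(\alpha^{n+1})$, which closes the induction.

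There is no real obstacle here; the only subtlety is the cancellation of the cross terms, which silently invokes both the multiplicativity of $\sigma$, $\tau$ and the commutativity of $\mathcal{A}$. In a noncommutative setting the statement would fail, so it is worth noting that the hypothesis ``$\mathcal{A}$ is a commutative algebra'' (standing throughout Section~\ref{section 2}) is essential at exactly this step.
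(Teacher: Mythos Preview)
Your proof is correct and follows essentially the same route as the paper: induction on $n$, expanding $D(\alpha^{n+1}) = D(\alpha^n)\tau(\alpha) + \sigma(\alpha^n)D(\alpha)$, substituting the inductive hypothesis and the base relation, and cancelling the cross terms via commutativity and multiplicativity of $\sigma,\tau$. Your added remark about where commutativity is actually used is a nice clarification that the paper leaves implicit.
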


\begin{proof} We again use induction on $n$. If $n=1$, the result holds trivially. Now suppose that $D(\alpha^{k}) = \beta (\tau - \sigma)(\alpha^{k})$. Then
\begin{eqnarray*}
D(\alpha^{k+1}) & = & D(\alpha^{k}) \tau(\alpha) + \sigma(\alpha^{k}) D(\alpha) = \beta (\tau - \sigma)(\alpha^{k}) \tau(\alpha) + \sigma(\alpha^{k}) \beta (\tau - \sigma)(\alpha) \\ & = & \beta \left(\tau(\alpha^{k+1}) - \sigma(\alpha^{k}) \tau(\alpha) + \sigma(\alpha^{k}) \tau(\alpha) - \sigma(\alpha^{k+1})\right) \\ & = & \beta (\tau - \sigma)(\alpha^{k+1})
\end{eqnarray*}
Induction is complete and so is proof.
\end{proof}

\begin{lemma}\th\label{lemma 2.7}
Let $\mathcal{A}$ be of finite rank $n$ and suppose that $\mathcal{A}$ has an $R$-basis of the form $\{1, \alpha, \alpha^{2}, ..., \alpha^{n-1}\}$ for some $\alpha \in \mathcal{A}$. Let $D: \mathcal{A} \rightarrow \mathcal{A}$ be a $(\sigma, \tau)$-derivation. If there exists some $\beta \in \mathcal{A}$ such that $D(\alpha) = \beta (\tau - \sigma)(\alpha)$, then $D$ is inner.
\end{lemma}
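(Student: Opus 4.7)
The plan is to combine \th\ref{lemma 2.6} with \th\ref{lemma 2.4} applied to the power basis $\{1, \alpha, \alpha^{2}, \ldots, \alpha^{n-1}\}$. What \th\ref{lemma 2.4} requires is that $D$ and the map $\gamma \mapsto \beta(\tau-\sigma)(\gamma)$ agree on every element of a fixed $R$-basis; once this is verified, $R$-linearity forces agreement on all of $\mathcal{A}$ and hence $D$ is inner.

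First I would check the basis element $1$: since $\sigma$ and $\tau$ are unital $R$-algebra endomorphisms, $(\tau-\sigma)(1) = 1-1 = 0$, so $\beta(\tau-\sigma)(1) = 0$; on the other hand, as recorded at the end of Section~\ref{section 1}, any $(\sigma,\tau)$-derivation $D$ with $\sigma,\tau$ unital satisfies $D(1) = 0$. Hence $D(1) = \beta(\tau-\sigma)(1)$ trivially.

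Next, for each $k \in \{1, 2, \ldots, n-1\}$, I would invoke \th\ref{lemma 2.6} directly: the hypothesis $D(\alpha) = \beta(\tau-\sigma)(\alpha)$ is exactly what \th\ref{lemma 2.6} needs, and it outputs $D(\alpha^{k}) = \beta(\tau-\sigma)(\alpha^{k})$ for every $k \in \mathbb{N}$, in particular on each of the basis elements $\alpha, \alpha^{2}, \ldots, \alpha^{n-1}$.

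Having shown $D(\alpha_{i}) = \beta(\tau-\sigma)(\alpha_{i})$ for every element $\alpha_{i}$ of the $R$-basis $\{1, \alpha, \ldots, \alpha^{n-1}\}$, the conclusion follows from \th\ref{lemma 2.4}, which gives $D(\gamma) = \beta(\tau-\sigma)(\gamma)$ for all $\gamma \in \mathcal{A}$; this is by definition an inner $(\sigma,\tau)$-derivation. There is no real obstacle here: the proof is essentially a two-line assembly of the preceding two lemmas, with the only subtlety being the verification that the case $k=0$ (the basis element $1$) is handled correctly, which reduces to the unital property of $\sigma$ and $\tau$ and the standard fact that $D(1)=0$.
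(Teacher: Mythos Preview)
Your proposal is correct and follows essentially the same approach as the paper: verify $D(1)=\beta(\tau-\sigma)(1)$ using $D(1)=0$ and $\sigma(1)=\tau(1)=1$, invoke \th\ref{lemma 2.6} to get $D(\alpha^{k})=\beta(\tau-\sigma)(\alpha^{k})$ for all $k$, and then apply \th\ref{lemma 2.4} to the power basis. The paper's proof is exactly this three-step assembly.
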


\begin{proof} Since $D(1) = 0$ and $\sigma(1) = \tau(1) = 1$, so $D(1) = \beta (\tau - \sigma)(1)$. Further since $D(\alpha) = \beta (\tau - \sigma)(\alpha)$, therefore, by \th\ref{lemma 2.6}, $$D(\alpha^{k}) = \beta (\tau - \sigma)(\alpha^{k})$$ for all $k \in \mathbb{N}$. So $D(\alpha^{k}) = \beta (\tau - \sigma)(\alpha^{k})$ for every $k \in \{0, 1, ..., n-1\}$. Now we use \th\ref{lemma 2.4}.
\end{proof}

The theorem below follows immediately.

\begin{theorem}\th\label{theorem 2.8}
Let $\mathcal{A}$ be of finite rank $n$ and suppose that $\mathcal{A}$ has an $R$-basis of the form $\{1, \alpha, \alpha^{2}, ..., \alpha^{n-1}\}$ for some $\alpha \in \mathcal{A}$. Then a $(\sigma, \tau)$-derivation $D:\mathcal{A} \rightarrow \mathcal{A}$ is inner if and only if there exists some $\beta \in \mathcal{A}$ such that $D(\alpha) = \beta (\tau - \sigma)(\alpha)$.
\end{theorem}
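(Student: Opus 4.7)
The plan is to prove the biconditional by splitting into the two directions, each of which reduces to material already developed. The forward direction is essentially the definition of inner, while the reverse direction is a clean corollary of \th\ref{lemma 2.7} — indeed the author flags the theorem as immediate, so the proposal is really to assemble the pieces in the correct order.

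For the forward direction, I would assume $D$ is inner, so by definition there exists $\beta \in \mathcal{A}$ with $D(x) = \beta \tau(x) - \sigma(x) \beta$ for every $x \in \mathcal{A}$. Commutativity of $\mathcal{A}$ lets me move $\beta$ past $\sigma(x)$, giving $D(x) = \beta(\tau(x) - \sigma(x)) = \beta(\tau - \sigma)(x)$. Specializing to $x = \alpha$ yields the required identity. This step is entirely routine and requires only that one explicitly invoke commutativity of $\mathcal{A}$, which is standing in this section.

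For the reverse direction, I would assume the existence of a $\beta \in \mathcal{A}$ with $D(\alpha) = \beta(\tau - \sigma)(\alpha)$ and then appeal directly to \th\ref{lemma 2.7}, whose hypotheses — $\mathcal{A}$ having a power $R$-basis $\{1, \alpha, \ldots, \alpha^{n-1}\}$ and $D$ being a $(\sigma,\tau)$-derivation with $D(\alpha)$ of the prescribed form — are exactly our hypotheses. Lemma \th\ref{lemma 2.7} concludes that $D$ is inner, which is what we want. If I were instead unfolding the argument, I would note that $D(1) = 0 = \beta(\tau - \sigma)(1)$ because $\sigma, \tau$ are unital, apply \th\ref{lemma 2.6} to obtain $D(\alpha^k) = \beta(\tau - \sigma)(\alpha^k)$ for every $k \in \{0, 1, \ldots, n-1\}$ by induction, and then invoke \th\ref{lemma 2.4} to propagate the identity from the basis to arbitrary elements of $\mathcal{A}$.

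There is no genuine obstacle here: the real content was packaged into \th\ref{lemma 2.4}, \th\ref{lemma 2.6}, and \th\ref{lemma 2.7}. The only point that needs care is the silent use of commutativity in the forward direction — without it, the equation $\beta \tau(x) - \sigma(x)\beta = \beta(\tau - \sigma)(x)$ would fail, and the theorem would not reduce so cleanly. I would therefore keep the proof to two or three lines, citing \th\ref{lemma 2.7} for the nontrivial implication.
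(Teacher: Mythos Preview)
Your proposal is correct and matches the paper's approach: the paper simply declares the theorem immediate after \th\ref{lemma 2.7}, and your two-line argument (definition plus commutativity for the forward direction, \th\ref{lemma 2.7} for the reverse) is exactly the intended unpacking. Nothing further is needed.
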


\section{\texorpdfstring{$(\sigma, \tau)$}{Lg}-Derivations of Number Rings}\label{section 3}
We know by \th\ref{theorem 1.1} that if $K$ is a number field, then $K = \mathbb{Q}(\theta)$ for some algebraic integer $\theta$. \th\ref{theorem 2.3} gives the lemma below:
\begin{lemma}\th\label{lemma 3.1}
Let $K = \mathbb{Q}(\theta)$ ($\theta$ an algebraic integer) be a number field of degree $n$ such that $\{1, \theta, ..., \theta^{n-1}\}$ is a power basis of $K$. Let $\sigma$ and $\tau$ be two different non-zero $\mathbb{Z}$-algebra endomorphisms of $O_{K}$. If a $\mathbb{Z}$-linear map $D:O_{K} \rightarrow O_{K}$ is a $(\sigma, \tau)$-derivation, then $$D(\theta^{k}) = \left(\sum_{(i,j) \in S_{k-1}}  \sigma(\theta^{i}) \tau(\theta^{j})\right)D(\theta)$$ for all $k \in \{1, 2, ..., n-1\}$.
\end{lemma}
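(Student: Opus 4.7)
The plan is to observe that this lemma is a direct specialization of \th\ref{theorem 2.3} to the setting $R = \mathbb{Z}$, $\mathcal{A} = O_K$. So the proof is essentially a matter of checking that the hypotheses of \th\ref{theorem 2.3} are met by $O_K$ and then transcribing the conclusion.

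First I would record that $O_K$ is a commutative ring with unity $1$, and that $\mathbb{Z} \subseteq O_K$ makes $O_K$ into a commutative unital $\mathbb{Z}$-algebra. By \th\ref{theorem 1.2}, the additive group of $O_K$ is free abelian of rank $n = [K:\mathbb{Q}]$, so as a $\mathbb{Z}$-module $O_K$ has finite rank $n$. The hypothesis that $\{1, \theta, \theta^2, \ldots, \theta^{n-1}\}$ is a power basis of $K$ (equivalently, of $O_K$) means, by the definition of integral basis given in Section \ref{section 1}, that this is in particular a $\mathbb{Z}$-basis of $O_K$; since $\theta$ is an algebraic integer, $\theta \in O_K$ and all the powers $\theta^i$ lie in $O_K$, so everything is consistent.

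Next, since $\sigma$ and $\tau$ are two different non-zero $\mathbb{Z}$-algebra endomorphisms of $O_K$, they are in particular unital (they send $1$ to $1$), and hence play the role of the endomorphisms $\sigma, \tau$ required by the setup of Section \ref{section 2}. The $\mathbb{Z}$-linear map $D : O_K \to O_K$ is by assumption a $(\sigma, \tau)$-derivation. Thus the hypotheses of \th\ref{theorem 2.3} are satisfied with $\alpha = \theta$, and applying that theorem gives the formula
\[
D(\theta^{k}) = \left(\sum_{(i,j) \in S_{k-1}}  \sigma(\theta^{i}) \tau(\theta^{j})\right)D(\theta)
\]
for every $k \in \{1, 2, \ldots, n-1\}$, which is exactly the desired conclusion.

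Since the whole proof is a reduction to \th\ref{theorem 2.3}, there is essentially no obstacle; the only potentially subtle point is confirming that a power basis of $K$ is indeed a $\mathbb{Z}$-basis of $O_K$ and not merely a $\mathbb{Q}$-basis of $K$, but this follows directly from the definition of power basis (which is stipulated to be an integral basis of the special form $\{1, \theta, \ldots, \theta^{n-1}\}$).
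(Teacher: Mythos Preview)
Your proposal is correct and follows exactly the same approach as the paper: the paper simply states that \th\ref{theorem 2.3} gives the lemma, and your argument is just a careful verification that the hypotheses of \th\ref{theorem 2.3} are indeed satisfied in the present setting.
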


\subsection{\texorpdfstring{$(\sigma, \tau)$}{Lg}-Derivations of Quadratic Number Rings}\label{subsection 3.1}
By \th\ref{theorem 1.3}, if $K$ is a quadratic field, then $K = \mathbb{Q}(\sqrt{d})$ for some square-free rational integer $d$. In this section, we take $K = \mathbb{Q}(\sqrt{d})$ and $\sigma$, $\tau$ as two different non-zero ring endomorphisms of $O_{K}$. Note that any ring endomorphism of the ring $O_{K}$ is a unital $\mathbb{Z}$-algebra endomorphism of the unital $\mathbb{Z}$-algebra $O_{K}$. As an application of \th\ref{lemma 2.1}, we present an independent proof of a result of \cite{Chaudhuri} stated as follows. 

\begin{theorem}[{\cite[Theorem 4.2]{Chaudhuri}}] \th\label{theorem 3.2}
Every $\mathbb{Z}$-linear map $D:O_{K} \rightarrow O_{K}$ with $D(1)=0$ is a $(\sigma, \tau)$-derivation of $O_{K}$.
\end{theorem}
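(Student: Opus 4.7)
The plan is to reduce the problem, via \th\ref{lemma 2.1}, to checking the derivation identity only on products of an integral basis of $O_K$, and then to handle the two cases $d \not\equiv 1 \pmod 4$ and $d \equiv 1 \pmod 4$ separately using \th\ref{theorem 1.5}. Writing the integral basis as $\{1, \alpha\}$, where $\alpha = \sqrt{d}$ in the first case and $\alpha = \frac{1+\sqrt{d}}{2}$ in the second, the four identities to verify are
\begin{equation*}
D(1 \cdot 1), \quad D(1 \cdot \alpha), \quad D(\alpha \cdot 1), \quad D(\alpha \cdot \alpha).
\end{equation*}
The first three are automatic: the hypothesis $D(1) = 0$ together with $\sigma(1) = \tau(1) = 1$ makes them trivial. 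Hence the only genuine identity to prove is
\begin{equation*}
D(\alpha^{2}) = D(\alpha)\tau(\alpha) + \sigma(\alpha) D(\alpha) = \bigl(\sigma(\alpha) + \tau(\alpha)\bigr) D(\alpha),
\end{equation*}
where the second equality uses commutativity of $O_K$.

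The next step is to classify the two endomorphisms $\sigma, \tau$. A ring endomorphism of $O_K$ is necessarily unital and $\mathbb{Z}$-linear, so it is determined by where it sends $\alpha$, and the image must be a root of the minimal polynomial of $\alpha$. In both cases this minimal polynomial has exactly two roots in $O_K$, yielding exactly two ring endomorphisms (the identity and the non-trivial Galois conjugation restricted to $O_K$). Since $\sigma \neq \tau$, the set $\{\sigma, \tau\}$ equals this pair. I can then compute $\sigma(\alpha) + \tau(\alpha)$ explicitly: in the case $d \not\equiv 1 \pmod 4$ it equals $\sqrt{d} + (-\sqrt{d}) = 0$, and in the case $d \equiv 1 \pmod 4$ it equals $\alpha + (1-\alpha) = 1$, using that the two roots of $x^2 - x + \frac{1-d}{4}$ sum to $1$.

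Finally, I compute $D(\alpha^2)$ using $\mathbb{Z}$-linearity of $D$ and $D(1) = 0$. When $d \not\equiv 1 \pmod 4$, $\alpha^2 = d \in \mathbb{Z}$, so $D(\alpha^2) = d\, D(1) = 0$, which matches $0 \cdot D(\alpha)$. When $d \equiv 1 \pmod 4$, the relation $\alpha^2 = \alpha + \frac{d-1}{4}$ (and the fact that $\frac{d-1}{4} \in \mathbb{Z}$) gives $D(\alpha^2) = D(\alpha) + \frac{d-1}{4} D(1) = D(\alpha)$, which matches $1 \cdot D(\alpha)$. Both cases check out, and the theorem follows.

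There is no serious obstacle here; the proof is essentially a bookkeeping argument. The only subtlety worth flagging is that one must use the congruence $d \equiv 1 \pmod 4$ precisely to make $\frac{d-1}{4}$ integral (so that $\mathbb{Z}$-linearity of $D$ applies), and one must invoke the fact that $\sigma$ and $\tau$ are genuinely distinct in order to pin down $\sigma(\alpha) + \tau(\alpha)$ as the trace of $\alpha$ rather than something else.
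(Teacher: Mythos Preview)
Your proof is correct and follows essentially the same route as the paper: both reduce via \th\ref{lemma 2.1} to products of the two-element integral basis, classify the two endomorphisms of $O_K$, and then observe the remaining identity is trivial. The paper's version is terser---it lists the possible $(\sigma,\tau)$ pairs and declares the rest ``trivially follows from \th\ref{lemma 2.1}''---whereas you actually carry out the verification of $D(\alpha^2) = (\sigma(\alpha)+\tau(\alpha))D(\alpha)$ in each case, which is exactly the computation the paper is implicitly appealing to.
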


\begin{proof} Only two possibilities arise: $d \not\equiv 1 \hspace{0.1cm} (\text{mod} \hspace{0.1cm} 4)$ and $d \equiv 1 \hspace{0.1cm} (\text{mod} \hspace{0.1cm} 4)$. Since $(\sqrt{d})^{2} = d$, so if $\theta$ is a non-zero ring endomorphism of $O_{K}$, then $(\theta(\sqrt{d}))^{2} = d$ or that $\theta(\sqrt{d}) = \pm \sqrt{d}$.

When $d \not\equiv 1 \hspace{0.1cm} (\text{mod} \hspace{0.1cm} 4)$, then from \th\ref{theorem 1.4}, $O_{K} = \mathbb{Z}[\sqrt{d}]$ and from \th\ref{theorem 1.5}, $\{1, \sqrt{d}\}$ is a $\mathbb{Z}$-basis of $O_{K}$. In this case, exactly two possibilities arise. For every $a, b \in \mathbb{Z}$,
\begin{itemize}
\item[(a)] $\sigma(a + b \sqrt{d}) = a + b \sqrt{d}$; $\tau(a + b \sqrt{d}) = a - b \sqrt{d}$,
\item[(b)] $\sigma(a + b \sqrt{d}) = a - b \sqrt{d}$; $\tau(a + b \sqrt{d}) = a + b \sqrt{d}.$
\end{itemize} 

When $d \equiv 1 \hspace{0.1cm} (\text{mod} \hspace{0.1cm} 4)$, then by \th\ref{theorem 1.4}, $O_{K} = \mathbb{Z}[\frac{1+\sqrt{d}}{2}]$ and by \th\ref{theorem 1.5}, $\{1,\frac{1+\sqrt{d}}{2}\}$ is a $\mathbb{Z}$-basis of $O_{K}$. Thus, in this case, too, exactly two possibilities arise. For all $a, b \in \mathbb{Z}$,
\begin{itemize}
\item[(a)] $\sigma(a + b (\frac{1+\sqrt{d}}{2})) = a + b (\frac{1+\sqrt{d}}{2})$; $\tau(a + b (\frac{1+\sqrt{d}}{2})) = a + b (\frac{1-\sqrt{d}}{2})$,
\item[(b)] $\sigma(a + b (\frac{1+\sqrt{d})}{2})) = a + b (\frac{1-\sqrt{d}}{2})$; $\tau(a + b (\frac{1+\sqrt{d}}{2})) = a + b (\frac{1+\sqrt{d}}{2})$.
\end{itemize} 

Now the proof trivially follows from \th\ref{lemma 2.1}. 
\end{proof}

Further, in the same paper \cite{Chaudhuri}, the author has proved the theorem stated below.

\begin{theorem}[{\cite[Theorem 4.2]{Chaudhuri}}] \th\label{theorem 3.3}
Let $d \not\equiv 1 \hspace{0.1cm} (\text{mod} \hspace{0.1cm} 4)$. Let $D:O_{K} \rightarrow O_{K}$ be a $(\sigma, \tau)$-derivation and $D(\sqrt{d}) = c_{0} + c_{1} \sqrt{d}$ for some $c_{0}, c_{1} \in \mathbb{Z}$. If $2 d$ divides $c_{0}$ and $c_{1}$ is even, then $D$ is inner.
\end{theorem}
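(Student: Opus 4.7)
The plan is to apply Theorem \th\ref{theorem 2.8} directly. Since $d \not\equiv 1 \pmod{4}$, Theorem \th\ref{theorem 1.5} gives the power basis $\{1, \sqrt{d}\}$ of $O_{K} = \mathbb{Z}[\sqrt{d}]$, so the hypothesis of Theorem \th\ref{theorem 2.8} is satisfied with $\theta = \sqrt{d}$ and $n = 2$. Thus it suffices to exhibit some $\beta \in O_{K}$ for which $D(\sqrt{d}) = \beta(\tau - \sigma)(\sqrt{d})$.

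The first step is to compute $(\tau - \sigma)(\sqrt{d})$ in each of the two cases enumerated in the proof of Theorem \th\ref{theorem 3.2}. In case (a), $\sigma(\sqrt{d}) = \sqrt{d}$ and $\tau(\sqrt{d}) = -\sqrt{d}$, so $(\tau - \sigma)(\sqrt{d}) = -2\sqrt{d}$; in case (b), the signs are swapped and $(\tau - \sigma)(\sqrt{d}) = 2\sqrt{d}$. In either case $(\tau - \sigma)(\sqrt{d}) = \varepsilon \cdot 2\sqrt{d}$ for some $\varepsilon \in \{+1, -1\}$ depending only on which of $\sigma, \tau$ is the nontrivial automorphism.

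The second step is to solve for $\beta = x + y\sqrt{d}$ with $x, y \in \mathbb{Z}$ satisfying $\beta \cdot \varepsilon \cdot 2\sqrt{d} = c_{0} + c_{1}\sqrt{d}$. Expanding the left side using $(\sqrt{d})^{2} = d$ yields $\varepsilon(2yd + 2x\sqrt{d}) = c_{0} + c_{1}\sqrt{d}$, which, by comparing coefficients in the basis $\{1, \sqrt{d}\}$, is equivalent to the pair of scalar equations $\varepsilon \cdot 2yd = c_{0}$ and $\varepsilon \cdot 2x = c_{1}$. Under the hypotheses $2d \mid c_{0}$ and $2 \mid c_{1}$, setting $y = \varepsilon c_{0}/(2d)$ and $x = \varepsilon c_{1}/2$ produces a genuine element of $\mathbb{Z}$, so $\beta = x + y\sqrt{d} \in O_{K}$ is the required witness.

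There is no real obstacle here: once Theorem \th\ref{theorem 2.8} has reduced the problem to a single equation at the generator $\sqrt{d}$, the argument is a one-line linear-algebra check in the free $\mathbb{Z}$-module $O_{K}$. The divisibility conditions in the hypothesis are precisely the integrality constraints arising from solving that equation over $\mathbb{Z}$ rather than over $\mathbb{Q}$, and so they are exactly what is needed to produce $\beta$; invoking Theorem \th\ref{theorem 2.8} then concludes that $D$ is inner.
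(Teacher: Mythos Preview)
Your proposal is correct and follows essentially the same approach as the paper: both arguments reduce the problem via Theorem~\ref{theorem 2.8} to finding $\beta \in O_K$ with $D(\sqrt{d}) = \beta(\tau-\sigma)(\sqrt{d})$, compute $(\tau-\sigma)(\sqrt{d}) = \pm 2\sqrt{d}$, and then solve the resulting $2\times 2$ linear system over $\mathbb{Z}$. The only cosmetic difference is that you treat both endomorphism pairs simultaneously with the sign $\varepsilon$, whereas the paper handles one case and remarks that the other is analogous.
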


We present a different proof of this theorem using \th\ref{theorem 2.8} and also prove that the conditions in the above theorem are necessary.

\begin{theorem}\th\label{theorem 3.4}
Let $d \not\equiv 1 \hspace{0.1cm} (\text{mod} \hspace{0.1cm} 4)$. Let $D:O_{K} \rightarrow O_{K}$ be a $(\sigma, \tau)$-derivation and $D(\sqrt{d}) = c_{0} + c_{1} \sqrt{d}$ for some $c_{0}, c_{1} \in \mathbb{Z}$. Then $D$ is inner if and only if $c_{0}$ is divisible by $2 d$ and $c_{1}$ is even. In particular, the following conditions hold:
\begin{enumerate}
\item[(i)] $D$ is inner if $2d$ divides both $c_{0}$ and $c_{1}$.
\item[(ii)] $O_{K}$ has non-trivial outer $(\sigma, \tau)$-derivations, that is, $\text{Out}_{(\sigma, \tau)}(O_{K}) \neq \{0\}$.
\end{enumerate}
\end{theorem}

\begin{proof}
Since $d \not\equiv 1 \hspace{0.1cm} (\text{mod} \hspace{0.1cm} 4)$, so $\{1, \sqrt{d}\}$ is a basis of the $\mathbb{Z}$-module $O_{K} = \mathbb{\mathbb{Z}}[\sqrt{d}]$.

$O_{K} = \mathbb{\mathbb{Z}}[\sqrt{d}]$ has precisely two different non-zero ring endomorphisms: $\phi_{1}(a + b \sqrt{d}) \\ = a + b \sqrt{d}$ and $\phi_{2}(a + b \sqrt{d}) = a - b \sqrt{d}$ ($a, b \in \mathbb{Z}$). Therefore, $(\sigma, \tau) = (\phi_{1}, \phi_{2})$ or $(\phi_{2}, \phi_{1})$. We prove the result for $(\sigma, \tau) = (\phi_{2}, \phi_{1})$ since the other follows similarly.

First, let $D$ be inner. Then there exists some $\beta = b_{0} + b_{1} \sqrt{d}$ for some $b_{0}, b_{1} \in \mathbb{Z}$ such that $D(\sqrt{d}) = \beta (\tau - \sigma)(\sqrt{d})$. Therefore, \begin{eqnarray*}
c_{0} + c_{1} \sqrt{d} = D(\sqrt{d}) & = & \beta (\tau - \sigma)(\sqrt{d}) \\ & = & (b_{0} + b_{1} \sqrt{d}) (\tau(\sqrt{d}) - \sigma(\sqrt{d})) \\ & = & 2d b_{1} + 2b_{0} \sqrt{d}
\end{eqnarray*}
Since $\{1, \sqrt{d}\}$ is a basis of the $\mathbb{Z}$-module $O_{K} = \mathbb{\mathbb{Z}}[\sqrt{d}]$, therefore, $$2d b_{1} = c_{0} \hspace{0.2cm} \text{and} \hspace{0.2cm} 2 b_{0} = c_{1}.$$
Obviously, $b_{0} = \frac{c_{1}}{2}$ and $b_{1} = \frac{c_{0}}{2d}$. Since $b_{0}, b_{1} \in \mathbb{Z}$, therefore, $c_{1}$ is divisible by $2$ and $c_{0}$ is divisible by $2d$. 

Conversely, let $c_{0}$ be divisible by $2d$ and $c_{1}$ be even.
Define $\beta = \left(\frac{c_{0}}{2d}\right) + \left(\frac{c_{1}}{2} \right) \sqrt{d}$. Then it can be verified that $\beta (\tau - \sigma)(\sqrt{d}) = D(\sqrt{d})$ as $D(\sqrt{d}) = c_{0} + c_{1} \sqrt{d}$.
Now the result follows immediately from \th\ref{theorem 2.8}.

Finally, (i) follows immediately and (ii) follows from the above proved characterization of inner derivations of $O_{K}$ together with  Lemma \ref{lemma 2.5} and Remark \ref{remark 2.6}.
\end{proof}

\begin{theorem}
Let $d \equiv 1 \hspace{0.1cm} (\text{mod} \hspace{0.1cm} 4)$. Let $D:O_{K} \rightarrow O_{K}$ be a $(\sigma, \tau)$-derivation and $D(\sqrt{d}) = c_{0} + c_{1} (\frac{1+\sqrt{d}}{2})$ for some $c_{0}, c_{1} \in \mathbb{Z}$. Then $D$ is inner if and only if $d$ divides $-c_{0} + c_{1} \left(\frac{d-1}{2}\right)$ and $2 c_{0} + c_{1}$. In particular, the following conditions hold:
\begin{enumerate}
\item[(i)] $D$ is inner if $d$ divides both $c_{0}$ and $c_{1}$.
\item[(ii)] $O_{K}$ has non-trivial outer $(\sigma, \tau)$-derivations, that is, $\text{Out}_{(\sigma, \tau)}(O_{K}) \neq \{0\}$. 
\end{enumerate}      
\end{theorem}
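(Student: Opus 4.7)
The plan is to parallel the proof of \th\ref{theorem 3.4}, now working with the power basis $\{1, \theta\}$ where $\theta = \frac{1+\sqrt{d}}{2}$. Since $d \equiv 1 \pmod{4}$, \th\ref{theorem 1.4}(ii) gives $O_K = \mathbb{Z}[\theta]$, so $\theta$ generates $O_K$ as a $\mathbb{Z}$-algebra and \th\ref{theorem 2.8} applies with $\alpha = \theta$. The two non-zero ring endomorphisms of $O_K$ are the identity $\phi_1$ and the conjugation $\phi_2$ (which sends $\theta$ to $1 - \theta$), so $(\sigma, \tau) \in \{(\phi_1, \phi_2), (\phi_2, \phi_1)\}$. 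I would handle the case $(\sigma, \tau) = (\phi_1, \phi_2)$ in detail and note that the other is symmetric.

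By \th\ref{theorem 2.8}, $D$ is inner iff there exists $\beta = b_0 + b_1\theta \in O_K$ with $D(\theta) = \beta(\tau - \sigma)(\theta)$. Since $\sqrt{d} = 2\theta - 1$ and $D(1) = 0$, this is equivalent (on multiplying by $2$) to $D(\sqrt{d}) = \beta(\tau - \sigma)(\sqrt{d})$. Using the fundamental identity $\theta^{2} = \theta + \frac{d-1}{4}$, I would expand
\[
\beta\sqrt{d} = (b_0 + b_1\theta)(2\theta - 1) = \left(\frac{b_1(d-1)}{2} - b_0\right) + (2b_0 + b_1)\theta.
\]
For $(\sigma, \tau) = (\phi_1, \phi_2)$ we have $(\tau - \sigma)(\sqrt{d}) = -2\sqrt{d}$; equating $c_0 + c_1\theta = -2\beta\sqrt{d}$ yields the linear system $c_0 = 2b_0 - b_1(d-1)$ and $c_1 = -4b_0 - 2b_1$. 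Solving, one finds $b_1 = -2b_0 - c_1/2$ and $b_0 = \frac{2c_0 - c_1(d-1)}{4d}$. The existence of integer $b_0, b_1$ is (after observing that $D(\theta) \in O_K$ forces $c_0, c_1$ to be even, and that $d$ is odd) equivalent to the two congruences $d \mid -c_0 + c_1(d-1)/2$ and $d \mid 2c_0 + c_1$. The case $(\sigma, \tau) = (\phi_2, \phi_1)$ produces the same conditions by the same calculation up to signs.

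For the \enquote{in particular} claims: (i) Take the $\mathbb{Z}$-linear map determined by $D(\theta) = 1$, so $D(\sqrt{d}) = 2$ and $(c_0, c_1) = (2, 0)$. By \th\ref{theorem 3.2} this is a $(\sigma, \tau)$-derivation, and since $|d| \geq 3$ (every square-free $d \equiv 1 \pmod{4}$ defining a proper quadratic field satisfies this) we have $d \nmid 4 = 2c_0 + c_1$, so the main equivalence shows $D$ is a nontrivial outer derivation. (ii) If $d$ divides both $c_0$ and $c_1$, then it trivially divides $2c_0 + c_1$ and $-c_0 + c_1(d-1)/2$, so $D$ is inner by the main claim. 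The principal technical obstacle is the linear-algebra bookkeeping: tracking the factor of $2$ arising from $\sqrt{d} = 2\theta - 1$, verifying the implicit parity condition on $c_0, c_1$, and reducing the integrality condition $4d \mid 2c_0 - c_1(d-1)$ on $b_0$ to the two stated divisibility conditions modulo $d$.
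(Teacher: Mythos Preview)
Your approach is essentially the same as the paper's: apply \th\ref{theorem 2.8} with the power basis $\{1,\theta\}$, $\theta=\tfrac{1+\sqrt d}{2}$, set $\beta=b_0+b_1\theta$, expand $\beta(\tau-\sigma)(\theta)$ using $\theta^2=\theta+\tfrac{d-1}{4}$, and solve a $2\times 2$ linear system for $b_0,b_1$. The only substantive differences are bookkeeping choices. You treat $(\sigma,\tau)=(\phi_1,\phi_2)$; the paper does $(\phi_2,\phi_1)$. More interestingly, you take the hypothesis $D(\sqrt d)=c_0+c_1\theta$ literally, whereas the paper's own proof silently works with $D(\theta)=c_0+c_1\theta$; this is why you pick up the extra factor of $2$ (since $D(\sqrt d)=2D(\theta)$), and why you need the parity observation that $c_0,c_1$ are even. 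Your reduction of the integrality condition $b_0=\tfrac{2c_0-c_1(d-1)}{4d}\in\mathbb Z$ to the stated divisibilities is correct once one uses that $d$ is odd; in fact the two divisibility conditions $d\mid -c_0+c_1\tfrac{d-1}{2}$ and $d\mid 2c_0+c_1$ are equivalent to each other (multiply the first by $-2$ and reduce mod $d$), which is why your computation only produces one genuine constraint. Your arguments for (i) and (ii) are fine and slightly more explicit than the paper's.
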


\begin{proof}
Since $d \equiv 1 \hspace{0.1cm} (\text{mod} \hspace{0.1cm} 4)$, so $\{1, \frac{1+\sqrt{d}}{2}\}$ is a basis of the $\mathbb{Z}$-module $O_{K} = \mathbb{\mathbb{Z}}[\frac{1+\sqrt{d}}{2}]$.
$O_{K}$ has precisely two different non-zero ring endomorphisms, namely, $\phi_{1}(a + b (\frac{1+\sqrt{d}}{2})) = a + b (\frac{1+\sqrt{d}}{2})$ and $\phi_{2}(a + b (\frac{1+\sqrt{d}}{2})) = a + b (\frac{1-\sqrt{d}}{2})$ ($a, b \in \mathbb{Z}$). Therefore, $(\sigma, \tau) = (\phi_{1}, \phi_{2})$ or $(\phi_{2}, \phi_{1})$. We prove the result for $(\sigma, \tau) = (\phi_{2}, \phi_{1})$ since the other follows similarly.

First, let $D$ be inner. Then there exists some $\beta = b_{0} + b_{1} (\frac{1+\sqrt{d}}{2})$ for some $b_{0}, b_{1} \in \mathbb{Z}$ such that $D(\frac{1+\sqrt{d}}{2}) = \beta (\tau - \sigma)(\frac{1+\sqrt{d}}{2})$. Therefore, 

\begin{equation*}
\begin{aligned}
c_{0} + c_{1} \left(\frac{1+\sqrt{d}}{2}\right) & = D\left(\frac{1+\sqrt{d}}{2}\right) = \beta \left(\tau - \sigma \right)\left(\frac{1+\sqrt{d}}{2}\right) \\ & = \left(b_{0} + b_{1} \left(\frac{1+\sqrt{d}}{2}\right)\right) \left(\tau \left(\frac{1+\sqrt{d}}{2} \right) - \sigma \left(\frac{1+\sqrt{d}}{2}\right) \right) \\ & = \left(b_{0} + b_{1} \left(\frac{1+\sqrt{d}}{2} \right) \right) \left( \left(\frac{1+\sqrt{d}}{2}\right) - \left(\frac{1-\sqrt{d}}{2}\right) \right) \\ & = \left(-b_{0} + b_{1} \left( \frac{d-1}{2} \right)\right) + (2b_{0} + b_{1}) \left(\frac{1+\sqrt{d}}{2}\right)
\end{aligned}
\end{equation*}
Since $\{1, \frac{1+ \sqrt{d}}{2}\}$ is a basis of $O_{K} = \mathbb{\mathbb{Z}}[\frac{1+ \sqrt{d}}{2}]$, therefore, $$-b_{0} + b_{1} \left( \frac{d-1}{2} \right) = c_{0} \hspace{0.2cm} \text{and} \hspace{0.2cm} 2b_{0} + b_{1} = c_{1}.$$
Note that since $d \equiv 1 \hspace{0.1cm} (\text{mod} \hspace{0.1cm} 4)$, so $\frac{d-1}{2} \in \mathbb{Z}$. Solving, we get, $$b_{0} = \frac{1}{d} \left(- c_{0} + c_{1} \left(\frac{d-1}{2} \right)\right) \hspace{0.1cm} \text{and} \hspace{0.1cm} b_{1} = \frac{1}{d} \left(2 c_{0} + c_{1}\right).$$ Since $b_{0}, b_{1} \in \mathbb{Z}$, therefore, $d$ divides $- c_{0} + c_{1} \left(\frac{d-1}{2} \right)$ and $2 c_{0} + c_{1}$. 

Conversely, let $d$ divide $- c_{0} + c_{1} \left(\frac{d-1}{2} \right)$ and $2 c_{0}+c_{1}$. Define $$\beta = \left(\frac{1}{d} \left(- c_{0} + c_{1} \left(\frac{d-1}{2} \right)\right)\right) + \left(\frac{1}{d} \left(2 c_{0} + c_{1}\right)\right)\left(\frac{1+\sqrt{d}}{2}\right).$$
Then it can be verified that $\beta \left(\tau - \sigma \right)\left(\frac{1+\sqrt{d}}{2}\right) = D\left(\frac{1+\sqrt{d}}{2}\right)$ as $D\left(\frac{1+\sqrt{d}}{2}\right) = c_{0} + c_{1} \left(\frac{1+\sqrt{d}}{2}\right)$.
The result now follows immediately by \th\ref{theorem 2.8}.

Finally, (i) follows immediately and (ii) follows from the above proved characterization of inner derivations of $O_{K}$ together with  Lemma \ref{lemma 2.5} and Remark \ref{remark 2.6}.
\end{proof}

\subsection{\texorpdfstring{$(\sigma, \tau)$}{Lg}-Derivations of Cyclotomic Number Rings}\label{subsection 3.2}
In this section, $K = \mathbb{Q}(\zeta)$ denotes a $p^{\text{th}}$ cyclotomic field, where $p$ is an odd rational prime. By \th\ref{theorem 1.7}, the ring of algebraic integers of $K$ is $O_{K} = \mathbb{Z}[\zeta]$ and $\{1, \zeta, \zeta^{2}, ..., \zeta^{p-2}\}$ is an integral basis of $K$. Further, $\sigma$ and $\tau$ denote any two different non-zero ring endomorphisms of $O_{K} = \mathbb{Z}[\zeta]$.

\begin{lemma}\th\label{lemma 3.6}
Let for each $k \in \{0, 1, ..., p-2\}$, $S_{k}$'s be sets as defined in Section \ref{subsection 2.3}. Then $$\sum_{(i,j) \in \cup_{k=0}^{p-2} S_{k}} \sigma(\zeta^{i}) \tau(\zeta^{j}) = 0.$$ 
\end{lemma}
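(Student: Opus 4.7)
The plan is to reduce this double sum to a finite geometric-style identity that collapses by virtue of the cyclotomic relation. First I would set $A := \sigma(\zeta)$ and $B := \tau(\zeta)$. Since $\sigma$ and $\tau$ are non-zero unital ring endomorphisms, applying them to $\phi_p(\zeta) = 0$ yields $\phi_p(A) = \phi_p(B) = 0$, so both $A$ and $B$ are roots of $\phi_p$; as $\phi_p(1) = p \neq 0$, neither can equal $1$, so both are primitive $p^{\text{th}}$ roots of unity. Furthermore, because a $\mathbb{Z}$-algebra endomorphism of $\mathbb{Z}[\zeta]$ is determined by its value on $\zeta$, the hypothesis $\sigma \neq \tau$ forces $A \neq B$.

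Next, I would rewrite the target sum as
$$\sum_{(i,j) \in \cup_{k=0}^{p-2} S_k} \sigma(\zeta^i) \tau(\zeta^j) \;=\; \sum_{k=0}^{p-2} \sum_{\substack{i+j=k \\ i,j \geq 0}} A^i B^j,$$
and invoke the telescoping identity $(A-B)\sum_{i+j=k} A^i B^j = A^{k+1} - B^{k+1}$, which is valid in any commutative ring. Multiplying the whole sum by $A - B$ therefore collapses the inner sums, giving
$$(A-B)\sum_{k=0}^{p-2}\sum_{i+j=k} A^i B^j \;=\; \sum_{k=0}^{p-2}(A^{k+1} - B^{k+1}) \;=\; \sum_{k=1}^{p-1} A^k \;-\; \sum_{k=1}^{p-1} B^k.$$

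The cyclotomic relation $\phi_p(A) = 0$ gives $\sum_{k=1}^{p-1} A^k = -1$, and identically $\sum_{k=1}^{p-1} B^k = -1$, so the right-hand side vanishes. Since $\mathbb{Z}[\zeta]$ is an integral domain and $A - B \neq 0$, I may cancel $A - B$ to conclude that the original double sum is zero. The only mildly delicate point in the argument is confirming that $A$ and $B$ are genuine primitive $p^{\text{th}}$ roots of unity distinct from one another; once this is in place, the telescoping identity plus $\phi_p(A) = \phi_p(B) = 0$ does all the remaining work essentially automatically.
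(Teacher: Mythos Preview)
Your proof is correct and takes a genuinely different route from the paper's. The paper parametrizes $\sigma(\zeta)=\zeta^{u}$, $\tau(\zeta)=\zeta^{u+v}$ and then argues combinatorially: since the total sum has $\frac{p(p-1)}{2}$ terms and (using $u+v\not\equiv 0 \pmod p$) the terms within each $S_k$ are pairwise distinct powers of $\zeta$, each element of $\{1,\zeta,\dots,\zeta^{p-1}\}$ must occur exactly $\frac{p-1}{2}$ times, whence the sum is $\frac{p-1}{2}\,\phi_p(\zeta)=0$. You instead use the algebraic telescoping identity $(A-B)\sum_{i+j=k}A^iB^j=A^{k+1}-B^{k+1}$, sum over $k$, apply $\phi_p(A)=\phi_p(B)=0$, and cancel the nonzero factor $A-B$ in the integral domain $\mathbb{Z}[\zeta]$. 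Your argument is cleaner and fully self-contained: it avoids the equidistribution count (which the paper asserts rather than proves in detail) and makes explicit where the hypotheses $\sigma\neq\tau$ and ``$\mathbb{Z}[\zeta]$ is a domain'' enter. The paper's approach, on the other hand, gives a more concrete description of the sum as a multiple of $1+\zeta+\cdots+\zeta^{p-1}$, which some readers may find illuminating.
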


\begin{proof} All the non-zero ring endomorphisms $\phi_{i}$ of $O_{K}$ are given by $\phi_{i}(\zeta) = \zeta^{i},$ where $i \in \{1, 2, ..., p-1\}$. Since $\sigma$ and $\tau$ are non-zero and different, therefore, $$\sigma(\zeta) = \zeta^{u} \hspace{0.2cm} \text{and} \hspace{0.2cm} \tau(\zeta) = \zeta^{u+v}$$ for some $u, v \in \{1, 2, ..., p-1\}$ such that $u+v \neq p$. Now, $$\sum_{(i,j) \in \cup_{k=0}^{p-2} S_{k}} \sigma(\zeta^{i}) \tau(\zeta^{j}) = \sum_{k=0}^{p-2} \sum_{(i,j) \in S_{k}} \sigma(\zeta^{i}) \tau(\zeta^{j}).$$
For any $k \in \{0, 1, ..., p-2\}$, $$\sum_{(i,j) \in S_{k}} \sigma(\zeta^{i}) \tau(\zeta^{j}) = \sum_{(i,j) \in S_{k}} \zeta^{ku} \zeta^{jv}.$$

There are three observations for each $k \in \{0, 1, ..., p-2\}$:
\begin{enumerate}
\item[(a)] The sum $\sum_{(i,j) \in S_{k}} \sigma(\zeta^{i}) \tau(\zeta^{j})$ contains $k+1$ terms and all these $k+1$ terms are different from each other since $u+v \neq p$.

\item[(b)] The sum $\sum_{(i,j) \in \cup_{k=0}^{p-2} S_{k}} \sigma(\zeta^{i}) \tau(\zeta^{j})$ contains exactly $\frac{p(p-1)}{2}$ terms.

\item[(c)] Since the prime $p$ is odd, therefore, for each $k \in \{1, 2, ..., p-1\}$, $\zeta^{k}$ is a primitive $p^{\text{th}}$ root of unity.
\end{enumerate}

These together imply that each term from the set $\{1, \zeta, \zeta^{2}, ..., \zeta^{p-1}\}$ is being repeated $\frac{p-1}{2}$ times in the sum $\sum_{(i,j) \in \cup_{k=0}^{p-2} S_{k}} \sigma(\zeta^{i}) \tau(\zeta^{j})$. Hence, $$\sum_{(i,j) \in \cup_{m=0}^{p-2} S_{m}} \sigma(\zeta^{i}) \tau(\zeta^{j}) = \frac{p-1}{2} \left(1 + \zeta + \zeta^{2} + ... + \zeta^{p-1}\right) = 0,$$ since $1 + \zeta + \zeta^{2} + ... + \zeta^{p-1} = 0$ by \th\ref{theorem 1.6}.

This proves the required result. Note that the proof also follows directly using the direct formula for the sum of the first $n$ ($n \in \mathbb{N}$) of a geometric sequence.
\end{proof}

\begin{lemma}\th\label{lemma 3.7}
Let $D:O_{K} \rightarrow O_{K}$ be a $\mathbb{Z}$-linear map with $D(1) = 0$ and \begin{equation}\label{eq 3.1}D(\zeta^{k}) = \left(\sum_{(i,j) \in S_{k-1}}  \sigma(\zeta^{i}) \tau(\zeta^{j})\right)D(\zeta)\end{equation} for all $k \in \{1, 2, ..., p-2\}$. Then $D$ is a $(\sigma, \tau)$-derivation.
\end{lemma}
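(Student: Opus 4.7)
The plan is to invoke \th\ref{lemma 2.1}, which reduces the task to verifying
$$D(\zeta^i \zeta^j) = D(\zeta^i)\tau(\zeta^j) + \sigma(\zeta^i)D(\zeta^j)$$
for every pair of basis elements $\zeta^i, \zeta^j$ with $0 \le i, j \le p-2$. Throughout I set $T(k) := \sum_{(a,b) \in S_{k-1}} \sigma(\zeta^a)\tau(\zeta^b)$ for $k \ge 1$ and $T(0) := 0$, so that the hypothesis together with $D(1)=0$ reads $D(\zeta^k) = T(k)\,D(\zeta)$ for $0 \le k \le p-2$.

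The first step is a combinatorial telescoping in the spirit of \th\ref{lemma 2.2}: the maps $(a,b) \mapsto (a, b+j)$ on $S_{i-1}$ and $(a,b) \mapsto (a+i, b)$ on $S_{j-1}$ embed their sources as complementary subsets of $S_{i+j-1}$ (distinguished by whether the first coordinate is $<i$ or $\ge i$), so
$$D(\zeta^i)\tau(\zeta^j) + \sigma(\zeta^i)D(\zeta^j) = T(i+j)\,D(\zeta).$$
Writing $k = i+j$, the whole proof therefore reduces to the identity $D(\zeta^k) = T(k)\,D(\zeta)$ for every $0 \le k \le 2(p-2)$, where for $k>p-2$ the left-hand side is interpreted after re-expressing $\zeta^k$ in the integral basis $\{1, \zeta, \ldots, \zeta^{p-2}\}$.

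I would then split into three cases. For $0 \le k \le p-2$ the identity is exactly the hypothesis. For $k = p-1$, the cyclotomic relation $\zeta^{p-1} = -(1 + \zeta + \cdots + \zeta^{p-2})$ together with $D(1) = 0$ gives $D(\zeta^{p-1}) = -\sum_{r=1}^{p-2} T(r)\,D(\zeta)$; rearranging \th\ref{lemma 3.6} as $\sum_{r=1}^{p-1} T(r) = 0$ then yields $D(\zeta^{p-1}) = T(p-1)\,D(\zeta)$. I regard this as the conceptual heart of the argument, since it is precisely where the cyclotomic identity enters through \th\ref{lemma 3.6}.

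The remaining range $p \le k \le 2p-4$ is the step I expect to be the main technical obstacle. Here $\zeta^p = 1$ collapses $\zeta^k$ to the basis element $\zeta^{k-p}$, so $D(\zeta^k) = T(k-p)\,D(\zeta)$ by the earlier cases, and I must verify the periodicity $T(k) = T(k-p)$. For this I would write $\sigma(\zeta) = \zeta^\mu$ and $\tau(\zeta) = \zeta^\nu$ with distinct $\mu, \nu \in \{1, \ldots, p-1\}$ (as in the proof of \th\ref{lemma 3.6}), derive the closed form
$$T(k) = \zeta^{(k-1)\nu} \sum_{a=0}^{k-1} \zeta^{a(\mu - \nu)},$$
and use that $\zeta^{\mu-\nu}$ is a primitive $p$-th root of unity: each block of $p$ consecutive terms in the geometric sum contributes $0$, while $\zeta^{p\nu} = 1$ leaves the prefactor invariant, giving $T(k+p) = T(k)$ for all $k \ge 0$. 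Combining the three cases verifies the hypothesis of \th\ref{lemma 2.1} and finishes the proof.
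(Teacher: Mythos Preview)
Your argument is correct and follows essentially the same route as the paper: both reduce to \th\ref{lemma 2.1}, establish the telescoping identity $D(\zeta^i)\tau(\zeta^j)+\sigma(\zeta^i)D(\zeta^j)=T(i+j)\,D(\zeta)$, use the hypothesis for $k\le p-2$, and invoke \th\ref{lemma 3.6} for $k=p-1$. The only difference is cosmetic: the paper treats $k=p$ and $p<k\le 2(p-2)$ as separate cases with direct computations, whereas you merge them via the single periodicity identity $T(k+p)=T(k)$, which is exactly the same geometric-sum cancellation written once instead of twice.
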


\begin{proof} According to \th\ref{lemma 2.1}, the result can be concluded by establishing that \begin{equation}\label{eq 3.2}D(\zeta^{i+j}) = D(\zeta^{i}) \tau(\zeta^{j}) + \sigma(\zeta^{i}) D(\zeta^{j})\end{equation} for all $i, j \in \{0, 1, ..., p-2\}$.

The relations (\ref{eq 3.2}) hold trivially when atleast one of $i$ or $j$ is $0$, using the fact that $\sigma(1) = \tau(1) = 1$ and $D(1) = 0$. So now let $i, j \in \{1, 2, ..., p-2\}$. Using (\ref{eq 3.1}), we get:
\begin{eqnarray*}
D(\zeta^{i}) \tau(\zeta^{j}) = \left(\sum_{(s,t) \in S_{i-1}}  \sigma(\zeta^{s}) \tau(\zeta^{t})\right)D(\zeta) \tau(\zeta^{j}) & = & \left(\sum_{(s,t) \in S_{i-1}}  \sigma(\zeta^{s}) \tau(\zeta^{t+j})\right)D(\zeta) \\ & = & \left(\sum_{s=0}^{i-1} \sigma(\zeta^{s}) \tau(\zeta^{i-1-s+j})\right)D(\zeta)\end{eqnarray*} and \begin{eqnarray*}
\sigma(\zeta^{i}) D(\zeta^{j}) = \sigma(\zeta^{i})\left(\sum_{(s,t) \in S_{j-1}} \sigma(\zeta^{s}) \tau(\zeta^{t})\right)D(\zeta) & = & \left(\sum_{(s,t) \in S_{j-1}} \sigma(\zeta^{i+s}) \tau(\zeta^{t})\right)D(\zeta) \\ & = & \left(\sum_{s=0}^{j-1} \sigma(\zeta^{i+s}) \tau(\zeta^{j-1-s})\right)D(\zeta) \\ & = & \left(\sum_{s=i}^{i+j-1} \sigma(\zeta^{s}) \tau(\zeta^{j-1+i-s})\right)D(\zeta).
\end{eqnarray*}
Therefore, \begin{equation}\label{eq 3.3}
D(\zeta^{i}) \tau(\zeta^{j}) + \sigma(\zeta^{i}) D(\zeta^{j}) = \left(\sum_{(s,t) \in S_{i+j-1}} \sigma(\zeta^{s}) \tau(\zeta^{t})\right)D(\zeta).
\end{equation}

Since $i, j \in \{1, 2,..., p-2\}$, so $i+j \in \{2, 3 ..., 2(p-2)\}$. We partition the proof into the following four cases.\vspace{10pt}

\textbf{Case 1:} $i+j \leq p-2$.

Since $i+j \leqslant p-2$, so by (\ref{eq 3.1}), $D(\zeta^{i+j}) = \left(\sum_{(s,t) \in S_{i+j-1}} \sigma(\zeta^{s}) \tau(\zeta^{t})\right)D(\zeta)$. Then by (\ref{eq 3.3}), $D(\zeta^{i}) \tau(\zeta^{j}) + \sigma(\zeta^{i}) D(\zeta^{j}) = D(\zeta^{i+j})$. Therefore, in this case, relations (\ref{eq 3.2}) hold.\vspace{10pt}

\textbf{Case 2:} $i+j = p-1$.

By \th\ref{lemma 3.6}, $\sum_{(i,j) \in \cup_{k=0}^{p-2} S_{k}} \sigma(\zeta^{i}) \tau(\zeta^{j}) = 0$. 

$\Rightarrow \sum_{k=0}^{p-2} \sum_{(i,j) \in S_{k}} \sigma(\zeta^{i}) \tau(\zeta^{j}) = 0$.

$\Rightarrow \sum_{(i,j) \in S_{p-2}} \sigma(\zeta^{i}) \tau(\zeta^{j}) = - \left(\sum_{k=0}^{p-3} \sum_{(i,j) \in S_{k}} \sigma(\zeta^{i}) \tau(\zeta^{j})\right)$.

$\Rightarrow \left(\sum_{(i,j) \in S_{p-2}} \sigma(\zeta^{i}) \tau(\zeta^{j})\right)D(\zeta) = - \left(\sum_{k=0}^{p-3} \left(\sum_{(i,j) \in S_{k}} \sigma(\zeta^{i}) \tau(\zeta^{j})\right)D(\zeta)\right)$.

Therefore, using (\ref{eq 3.1}) and (\ref{eq 3.3}), $$D(\zeta^{i}) \tau(\zeta^{j}) + \sigma(\zeta^{i}) D(\zeta^{j}) = -\left(\sum_{k=0}^{p-3} D(\zeta^{k+1})\right) = -D(1 + \zeta + \zeta^{2} + ... + \zeta^{p-2}) = D(\zeta^{p-1}).$$

So (\ref{eq 3.2}) holds in this case too.\vspace{10pt}

\textbf{Case 3:} $i+j = p$.

Since the ring endomorphisms $\sigma$ and $\tau$ are non-zero and different, therefore, $\sigma(\zeta) = \zeta^{u} \hspace{0.2cm} \text{and} \hspace{0.2cm} \tau(\zeta) = \zeta^{u+v}$ for some $u, v \in \{1, 2, ..., p-1\}$ such that $u+v \neq p$. Now by (\ref{eq 3.3}),
\begin{equation*}
\begin{aligned}
D(\zeta^{i}) \tau(\zeta^{j}) + \sigma(\zeta^{i}) D(\zeta^{j}) & = \left(\sum_{(s,t) \in S_{i+j-1}} \sigma(\zeta^{s}) \tau(\zeta^{t})\right)D(\zeta) 
 = \left(\sum_{(s,t) \in S_{p-1}} \sigma(\zeta^{s}) \tau(\zeta^{t})\right)D(\zeta) 
\\ & = \left(\sum_{s=0}^{p-1} \sigma(\zeta^{s}) \tau(\zeta^{p-1-s})\right)D(\zeta) 
 = \zeta^{(p-1)u} \left(\sum_{s=0}^{p-1} \zeta^{(p-1-s)v})\right)D(\zeta) 
\\ & = \zeta^{(p-1)u} \left(1 + \zeta^{v} + \zeta^{2v} + ... + \zeta^{(p-1)v}\right)D(\zeta) 
 = 0,
\end{aligned}
\end{equation*} as $\zeta^{v}$ satisfies the cyclotomic polynomial. Therefore, $D(\zeta^{i}) \tau(\zeta^{j}) + \sigma(\zeta^{i}) D(\zeta^{j}) = D(\zeta^{p})$ as $\zeta^{p} = 1$ and $D(1) = 0$. So (\ref{eq 3.2}) holds in this case as well.\vspace{10pt}

\textbf{Case 4:} $p < i+j \leq 2(p-2)$.

Then $i+j = m + p$ for some $m \in \{1, 2, ..., p-2\}$. By (\ref{eq 3.1}),

\begin{eqnarray*}
D(\zeta^{i+j}) = D(\zeta^{m}) = \left(\sum_{(s,t) \in S_{m-1}} \sigma(\zeta^{s}) \tau(\zeta^{t})\right)D(\zeta) & = & \left(\sum_{t=0}^{m-1} \sigma(\zeta^{m-1-t}) \tau(\zeta^{t})\right)D(\zeta) \\ & = & \zeta^{(m-1)u} \left(\sum_{t=0}^{m-1} \zeta^{tv}\right)D(\zeta).\end{eqnarray*}

Further, by (\ref{eq 3.3}),  \begin{equation*}
\begin{aligned}
D(\zeta^{i}) \tau(\zeta^{j}) + \sigma(\zeta^{i}) D(\zeta^{j}) & = \left(\sum_{(s,t) \in S_{i+j-1}} \sigma(\zeta^{s}) \tau(\zeta^{t})\right)D(\zeta) = \left(\sum_{(s,t) \in S_{m+p-1}} \sigma(\zeta^{s}) \tau(\zeta^{t})\right)D(\zeta) 
\\ & = \left(\sum_{t=0}^{m+p-1} \sigma(\zeta^{m+p-1-t}) \tau(\zeta^{t})\right)D(\zeta) = \zeta^{(m+p-1)u} \left(\sum_{t=0}^{m+p-1} \zeta^{tv}\right)D(\zeta) 
\\ & = \zeta^{(m+p-1)u} \left(\sum_{t=0}^{m-1} \zeta^{tv} + \sum_{t=m}^{m+p-1} \zeta^{tv}\right)D(\zeta) 
\\ & = \zeta^{(m-1)u} \left(\sum_{t=0}^{m-1} \zeta^{tv} + \zeta^{mv} \left(\sum_{t=0}^{p-1} \zeta^{tv}\right)\right)D(\zeta) 
\\ & = \zeta^{(m-1)u} \left(\sum_{t=0}^{m-1} \zeta^{tv}\right)D(\zeta)
\end{aligned}
\end{equation*} since $\sum_{t=0}^{p-1} \zeta^{tv} = 0$ as $v \in \{1, 2, ..., p-1\}.$ Therefore, $D(\zeta^{i+j}) = D(\zeta^{i}) \tau(\zeta^{j}) + \sigma(\zeta^{i}) D(\zeta^{j})$. This gives again the relations (\ref{eq 3.2}). 

\noindent From the above four cases, w
e conclude that the relations (\ref{eq 3.2}) hold for all $i, j \in \{0, 1, ..., p-2\}$. The result now can be concluded from \th\ref{lemma 2.1}.
\end{proof}

As a consequence of \th\ref{lemma 3.1} and \th\ref{lemma 3.7}, we have the main theorem.

\begin{theorem}\th\label{theorem 3.8}
Let $D:O_{K} \rightarrow O_{K}$ be a $\mathbb{Z}$-linear map with $D(1) = 0$. Then $D$ is a $(\sigma, \tau)$-derivation if and only if the following relations hold for all $k \in \{1, 2, ..., p-2\}$: $$D(\zeta^{k}) = \left(\sum_{(i,j) \in S_{k-1}}  \sigma(\zeta^{i}) \tau(\zeta^{j})\right)D(\zeta).$$
\end{theorem}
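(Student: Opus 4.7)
The plan is to invoke the two lemmas already proved in this subsection, since the theorem is precisely their conjunction. First I would establish the forward implication as a direct specialization of \th\ref{lemma 3.1}. Recall that by \th\ref{theorem 1.6} the minimal polynomial of $\zeta$ over $\mathbb{Q}$ has degree $p-1$, and by \th\ref{theorem 1.7} the set $\{1, \zeta, \zeta^{2}, \ldots, \zeta^{p-2}\}$ is an integral basis of $O_{K}=\mathbb{Z}[\zeta]$; this is a power basis with generator $\theta = \zeta$, so taking $K=\mathbb{Q}(\zeta)$ and $n = p-1$ in \th\ref{lemma 3.1} gives exactly the required formula for $D(\zeta^{k})$, $k \in \{1,\ldots,p-2\}$, whenever $D$ is a $(\sigma,\tau)$-derivation.

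For the converse, I would apply \th\ref{lemma 3.7} directly: that lemma says precisely that a $\mathbb{Z}$-linear map $D$ on $O_{K}$ with $D(1)=0$ satisfying the displayed relations on $\zeta, \zeta^{2},\ldots,\zeta^{p-2}$ is automatically a $(\sigma,\tau)$-derivation. Since the integral basis $\{1,\zeta,\ldots,\zeta^{p-2}\}$ completely determines any $\mathbb{Z}$-linear map, and the Leibniz rule $D(\alpha\beta)=D(\alpha)\tau(\beta)+\sigma(\alpha)D(\beta)$ extends $\mathbb{Z}$-bilinearly from basis pairs (via \th\ref{lemma 2.1}), no further computation is required. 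Combining both directions yields the equivalence.

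In effect, the proof is a one-line citation: "$(\Rightarrow)$ follows from \th\ref{lemma 3.1}; $(\Leftarrow)$ follows from \th\ref{lemma 3.7}." The substantive work has been done in \th\ref{lemma 3.7}, whose proof had to handle the four cases according to whether $i+j \leq p-2$, $i+j = p-1$, $i+j = p$, or $p<i+j \leq 2(p-2)$, making crucial use of the cyclotomic identity $1+\zeta+\cdots+\zeta^{p-1}=0$ from \th\ref{theorem 1.6} and the combinatorial counting in \th\ref{lemma 3.6}. So the only potential obstacle here is purely editorial: ensuring that the power-basis hypothesis of \th\ref{lemma 3.1} is visibly satisfied in the cyclotomic setting, which I would note explicitly by pointing to \th\ref{theorem 1.7}.
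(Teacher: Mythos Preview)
Your proposal is correct and matches the paper's own treatment exactly: the paper presents Theorem~3.8 as an immediate consequence of \th\ref{lemma 3.1} (forward direction) and \th\ref{lemma 3.7} (converse), with no additional argument. Your explicit verification that the power-basis hypothesis of \th\ref{lemma 3.1} is met via \th\ref{theorem 1.7} is a helpful clarification but not a departure from the paper's approach.
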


\begin{corollary}\th\label{corollary 3.9}
The $\mathbb{Z}$-module $\mathcal{D}_{(\sigma, \tau)}(O_{K})$ is finitely generated of rank $p-1$.
\end{corollary}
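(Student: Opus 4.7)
The plan is to exhibit an explicit $\mathbb{Z}$-module isomorphism between $\mathcal{D}_{(\sigma,\tau)}(O_{K})$ and $O_{K}=\mathbb{Z}[\zeta]$, and then invoke \th\ref{theorem 1.7}, which asserts that $\mathbb{Z}[\zeta]$ is free of rank $p-1$ over $\mathbb{Z}$.

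First, I would define the evaluation map
\[
\Phi:\mathcal{D}_{(\sigma,\tau)}(O_{K})\longrightarrow O_{K},\qquad \Phi(D)=D(\zeta).
\]
The componentwise $\mathbb{Z}$-module structure on $\mathcal{D}_{(\sigma,\tau)}(O_{K})$ makes $\Phi$ automatically $\mathbb{Z}$-linear, so the content lies in checking bijectivity.

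Next, I would verify injectivity of $\Phi$ using \th\ref{theorem 3.8}. Since every $D\in\mathcal{D}_{(\sigma,\tau)}(O_{K})$ satisfies $D(1)=0$ and $D(\zeta^{k})=\bigl(\sum_{(i,j)\in S_{k-1}}\sigma(\zeta^{i})\tau(\zeta^{j})\bigr)D(\zeta)$ for $k\in\{1,\ldots,p-2\}$, the value $D(\zeta)$ determines $D$ on the integral basis $\{1,\zeta,\ldots,\zeta^{p-2}\}$, and hence on all of $O_{K}$ by $\mathbb{Z}$-linearity. Thus $\Phi(D)=0$ forces $D=0$.

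For surjectivity, given any $\alpha\in O_{K}$, I would define a $\mathbb{Z}$-linear map $D_{\alpha}:O_{K}\to O_{K}$ on the integral basis by $D_{\alpha}(1)=0$ and
\[
D_{\alpha}(\zeta^{k})=\left(\sum_{(i,j)\in S_{k-1}}\sigma(\zeta^{i})\tau(\zeta^{j})\right)\alpha,\qquad k\in\{1,\ldots,p-2\},
\]
extended $\mathbb{Z}$-linearly. The converse direction of \th\ref{theorem 3.8} (which is the substance of \th\ref{lemma 3.7}) then guarantees that $D_{\alpha}$ is a genuine $(\sigma,\tau)$-derivation of $O_{K}$, and clearly $\Phi(D_{\alpha})=\alpha$. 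Hence $\Phi$ is an isomorphism of $\mathbb{Z}$-modules. Since $O_{K}=\mathbb{Z}[\zeta]$ is free of rank $p-1$ over $\mathbb{Z}$ by \th\ref{theorem 1.7}, $\mathcal{D}_{(\sigma,\tau)}(O_{K})$ is finitely generated and free of rank $p-1$, with an explicit basis $\{D_{\zeta^{k}}:k=0,1,\ldots,p-2\}$ (where $D_{\zeta^{0}}=D_{1}$ corresponds to $\alpha=1$).

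There is essentially no serious obstacle here: all the nontrivial work has already been done in \th\ref{lemma 3.7}/\th\ref{theorem 3.8}. The only minor point worth being careful about is ensuring that the map $D_{\alpha}$ defined on the basis actually lands in $O_{K}$ and is well-defined, but this is immediate since each coefficient $\sum_{(i,j)\in S_{k-1}}\sigma(\zeta^{i})\tau(\zeta^{j})$ lies in $O_{K}$ (endomorphisms send $O_{K}$ to itself) and $O_{K}$ is closed under multiplication.
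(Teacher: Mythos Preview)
Your proof is correct and follows essentially the same approach as the paper: both construct, for each basis element $\zeta^{i}$ of $O_{K}$, the $(\sigma,\tau)$-derivation $D_{i}$ with $D_{i}(\zeta)=\zeta^{i}$ via \th\ref{theorem 3.8}, and identify $\{D_{0},\ldots,D_{p-2}\}$ as a $\mathbb{Z}$-basis of $\mathcal{D}_{(\sigma,\tau)}(O_{K})$. Your packaging via the evaluation isomorphism $\Phi(D)=D(\zeta)$ is a slightly cleaner way to verify linear independence and spanning than the paper's ``it can be easily verified,'' but the underlying content is identical.
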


\begin{proof} For every $i \in \{0, 1, ..., p-2\}$, define $D_{i}:O_{K} \rightarrow O_{K}$ as a $\mathbb{Z}$-linear map with $D(1) = 0$ and $$D_{i}(\zeta) = \zeta^{i}.$$ More precisely, for each $i \in \{0, 1, ..., p-2\}$, let $D_{i}:O_{K} \rightarrow O_{K}$ be a map defined by $$D_{i}\left(\sum_{j=0}^{p-2} a_{j} \zeta^{j}\right) = \sum_{j=1}^{p-2} a_{j} D_{i}(\zeta^{j}),$$ where for each $j \in \{1, 2, ..., p-2\}$, $D_{i}(\zeta^{j})$ is defined as $$D_{i}(\zeta^{j}) = \left(\sum_{(s,t) \in S_{j-1}}  \sigma(\zeta^{s}) \tau(\zeta^{t})\right)D_{i}(\zeta).$$
Then by \th\ref{theorem 3.8}, for each $i \in \{0, 1, ..., p-2\}$, $D_{i}:O_{K} \rightarrow O_{K}$ is a $(\sigma, \tau)$-derivation.
Further, it can be easily verified that $\{D_{0}, D_{1}, ..., D_{p-2}\}$ forms a linearly independent subset of the $\mathbb{Z}$-module $\mathcal{D}_{(\sigma, \tau)}(O_{K})$ that generates it. Hence, the result is proved.
\end{proof}

We observe from the preceding corollary that the rank of the $\mathbb{Z}$-module $\mathcal{D}_{(\sigma, \tau)}(O_{K})$ is equal to $p-1$, the degree of $K = \mathbb{Q}(\zeta)$ and which, by \th\ref{theorem 1.2}, is also equal to the rank of the free abelian (additive) group $O_{K} = \mathbb{Z}[\zeta]$.

Some immediate corollaries can be stated below.

\begin{corollary}\th\label{corollary 3.10}
Let $K = Q(\zeta)$ be a $3^{\text{th}}$ cyclotomic field. Then any $\mathbb{Z}$-linear map $D:O_{K} \rightarrow O_{K}$ with $D(1)=0$ is a $(\sigma, \tau)$-derivation.
\end{corollary}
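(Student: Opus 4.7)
The plan is to specialize \th\ref{theorem 3.8} to $p=3$ and observe that the stated condition becomes vacuous. Since $K=\mathbb{Q}(\zeta)$ with $\zeta$ a primitive $3^{\text{rd}}$ root of unity, \th\ref{theorem 1.7} gives $O_K=\mathbb{Z}[\zeta]$ with integral basis $\{1,\zeta\}$, because $p-2=1$. Thus a $\mathbb{Z}$-linear map $D:O_K\to O_K$ is fully determined by the two values $D(1)$ and $D(\zeta)$.

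By \th\ref{theorem 3.8}, with the assumption $D(1)=0$ already in hand, $D$ is a $(\sigma,\tau)$-derivation if and only if the relation
\[
D(\zeta^{k}) \;=\; \Bigl(\sum_{(i,j)\in S_{k-1}} \sigma(\zeta^{i})\,\tau(\zeta^{j})\Bigr)D(\zeta)
\]
holds for every $k\in\{1,2,\ldots,p-2\}$. For $p=3$ this range collapses to the single value $k=1$, and for $k=1$ we have $S_{0}=\{(0,0)\}$, so the coefficient of $D(\zeta)$ on the right-hand side is $\sigma(\zeta^{0})\tau(\zeta^{0})=\sigma(1)\tau(1)=1$, using that $\sigma$ and $\tau$ are unital ring endomorphisms. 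The condition therefore reduces to the tautology $D(\zeta)=D(\zeta)$.

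I would conclude by stating that, because the only nontrivial requirement imposed by \th\ref{theorem 3.8} is empty when $p=3$, any $\mathbb{Z}$-linear map $D:O_K\to O_K$ with $D(1)=0$ automatically satisfies the hypothesis of that theorem and is therefore a $(\sigma,\tau)$-derivation of $O_K=\mathbb{Z}[\zeta]$. There is no genuine obstacle here: the result is essentially a direct corollary of \th\ref{theorem 3.8} together with the fact that an integral basis of a $3^{\text{rd}}$ cyclotomic ring has only two elements, so no higher power of $\zeta$ needs to be checked.
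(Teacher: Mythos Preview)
Your proposal is correct and matches the paper's intent: the paper states this as an immediate corollary of \th\ref{theorem 3.8} without giving a separate proof, and your argument is precisely the natural specialization to $p=3$, where the range $\{1,\ldots,p-2\}=\{1\}$ and the single relation $D(\zeta)=D(\zeta)$ is trivially satisfied.
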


\begin{corollary}\th\label{corollary 3.11}
There always exists a non-zero $(\sigma, \tau)$-derivation of $O_{K} = \mathbb{Z}[\zeta]$.
\end{corollary}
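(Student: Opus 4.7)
The plan is to read this off immediately from \th\ref{corollary 3.9}. Since $p$ is an odd rational prime, we have $p \geq 3$, so the rank $p - 1$ of the $\mathbb{Z}$-module $\mathcal{D}_{(\sigma,\tau)}(O_K)$ is at least $2$. A finitely generated free $\mathbb{Z}$-module of positive rank certainly contains non-zero elements; in fact it contains infinitely many. This alone yields the corollary.

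For concreteness I would also point to the explicit generators constructed inside the proof of \th\ref{corollary 3.9}. There, for each $i \in \{0, 1, \ldots, p-2\}$, a $(\sigma,\tau)$-derivation $D_i$ was built by specifying $D_i(1) = 0$ and $D_i(\zeta) = \zeta^{i}$, and then extending to the remaining basis vectors via the formula of \th\ref{theorem 3.8}. Since $\{1, \zeta, \ldots, \zeta^{p-2}\}$ is an integral basis of $O_K$ and $\zeta^{i} \neq 0$, we have $D_i \neq 0$, so any single $D_i$ provides a witness.

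The only point worth verifying is that the argument does not collapse for the smallest case $p = 3$, where $p - 1 = 2$; but even there the module $\mathcal{D}_{(\sigma,\tau)}(O_K)$ has rank $2$, so it contains non-zero elements, and indeed \th\ref{corollary 3.10} already records that every $\mathbb{Z}$-linear map $D$ on $O_K$ with $D(1) = 0$ is a $(\sigma,\tau)$-derivation in that case. There is no real obstacle here: the statement is essentially a corollary of a corollary, with the substantive work already done in \th\ref{theorem 3.8} and \th\ref{corollary 3.9}.
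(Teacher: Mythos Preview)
Your proposal is correct and matches the paper's approach exactly: the paper lists this corollary as immediate from \th\ref{corollary 3.9}, and your argument via the positive rank of $\mathcal{D}_{(\sigma,\tau)}(O_K)$ together with the explicit generators $D_i$ is precisely the intended reasoning.
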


We propose the following conjecture.

\begin{conjecture}\th\label{conjecture 3.12}
Suppose $\beta = \sum_{i=0}^{p-2} b_{i} \zeta^{i} \in O_{K}$ and $\beta (\tau - \sigma) (\zeta) =  \sum_{i=0}^{p-2} \left( \sum_{j=0}^{p-2} a_{ij} b_{j} \right) \zeta^{i}$. Then $A = [a_{ij}]$ is a $(p-1) \times (p-1)$ matrix with determinant $p$.
\end{conjecture}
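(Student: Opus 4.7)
The plan is to recognize the matrix $A$ as the matrix of a multiplication-by-$\gamma$ operator on $O_K$ and then compute its determinant as the field norm of $\gamma$. Specifically, the map $\beta \mapsto \beta(\tau-\sigma)(\zeta)$ is $\mathbb{Z}$-linear on $O_K = \mathbb{Z}[\zeta]$, and $A$ is exactly the matrix of this map with respect to the integral basis $\{1,\zeta,\ldots,\zeta^{p-2}\}$ (from \th\ref{theorem 1.7}). Since any non-zero ring endomorphism of $\mathbb{Z}[\zeta]$ sends $\zeta$ to a primitive $p$-th root of unity, I would write $\sigma(\zeta)=\zeta^{u}$ and $\tau(\zeta)=\zeta^{w}$ with $u,w\in\{1,\ldots,p-1\}$, $u\neq w$. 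Then $\gamma := (\tau-\sigma)(\zeta) = \zeta^{w}-\zeta^{u}$, and the key classical fact I will invoke is that for $\gamma\in O_K$ the determinant of multiplication-by-$\gamma$ on $O_K$ (in any integral basis) equals the field norm $N_{K/\mathbb{Q}}(\gamma)$.

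Thus the whole problem reduces to computing $N_{K/\mathbb{Q}}(\zeta^{w}-\zeta^{u})$. I would carry this out via
\begin{equation*}
N_{K/\mathbb{Q}}(\zeta^{w}-\zeta^{u}) \;=\; \prod_{j=1}^{p-1}\bigl(\zeta^{jw}-\zeta^{ju}\bigr) \;=\; \Bigl(\prod_{j=1}^{p-1}\zeta^{ju}\Bigr)\cdot\prod_{j=1}^{p-1}\bigl(\zeta^{j(w-u)}-1\bigr).
\end{equation*}
For the first factor, $\prod_{j=1}^{p-1}\zeta^{ju} = \zeta^{u\cdot p(p-1)/2}=1$, since $p$ is odd and therefore $p\mid u\cdot p(p-1)/2$. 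For the second factor, because $w-u$ is invertible modulo $p$, the map $j\mapsto j(w-u)\bmod p$ permutes $\{1,\ldots,p-1\}$, so
\begin{equation*}
\prod_{j=1}^{p-1}\bigl(\zeta^{j(w-u)}-1\bigr) \;=\; \prod_{j=1}^{p-1}(\zeta^{j}-1) \;=\; (-1)^{p-1}\prod_{j=1}^{p-1}(1-\zeta^{j}) \;=\; \Phi_{p}(1) \;=\; p,
\end{equation*}
using \th\ref{theorem 1.6} to identify $\Phi_p(x)=\prod_{j=1}^{p-1}(x-\zeta^j)$ and evaluating at $x=1$, together with $p-1$ being even. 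Multiplying the two factors gives $N_{K/\mathbb{Q}}(\gamma)=p$, whence $\det A = p$.

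I expect no serious obstacle here; the proof is essentially a standard cyclotomic-norm calculation combined with the elementary identification of $A$ as a multiplication matrix. The one subtle point worth checking carefully is the sign in the norm computation — the positive value (rather than $\pm p$) relies crucially on $p$ being odd, which makes both $\prod_j \zeta^{ju}=1$ (not $-1$) and $(-1)^{p-1}=1$. One could alternatively obtain $\det A=p$ by noting that $\gamma=\zeta^{u}(\zeta^{w-u}-1)$, that $\zeta^{u}$ is a unit of norm $1$, and that $(1-\zeta^{k})$ has norm $p$ for every $k\in\{1,\ldots,p-1\}$; this route uses the well-known principal-ideal factorization $(p)=(1-\zeta)^{p-1}$ in $\mathbb{Z}[\zeta]$ and avoids the explicit product manipulation, but is otherwise equivalent. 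Either way, the conjecture is true and can in fact be upgraded from a conjecture to a theorem.
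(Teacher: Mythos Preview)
Your argument is correct and complete. The identification of $A$ as the matrix of multiplication by $\gamma=(\tau-\sigma)(\zeta)=\zeta^{w}-\zeta^{u}$ on $O_K$ with respect to the integral basis $\{1,\zeta,\ldots,\zeta^{p-2}\}$ is exactly what the definition in the conjecture says, the equality $\det A = N_{K/\mathbb{Q}}(\gamma)$ is the standard characterization of the field norm, and your norm computation is clean and accurate (the parity considerations for the sign are handled correctly).

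The key point of comparison is that the paper does \emph{not} prove this statement at all: it is labelled a conjecture, and the authors report only computational verification for odd primes $p<100$ via SAGE and MATLAB, together with explicit matrices tabulated in an appendix. Your proof therefore goes strictly beyond the paper --- it settles the conjecture for all odd primes $p$ with a short, conceptual argument that the authors apparently did not find. In particular, your observation that $A$ is a multiplication matrix (so that $\det A$ is a norm) is the decisive idea missing from the paper; once one sees this, the result follows immediately from the classical identity $\prod_{k=1}^{p-1}(1-\zeta^{k})=\Phi_p(1)=p$. As you note, this also confirms the downstream \th\ref{conjecture 3.13}, since $\det A = p$ makes the solvability criterion $A^{-1}C\in\mathbb{Z}^{p-1}$ equivalent to $\frac{1}{p}\operatorname{Adj}(A)\,C\in\mathbb{Z}^{p-1}$.
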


We have verified the conjecture for all odd primes less than $100$ using SageMath and MATLAB. Calculations for primes $3, 5, 7, 11, 13$ were done on SageMath (explicit matrix $A$ and $\text{det}(A)$ were found for all possible values of the pair $(\sigma(\zeta), \tau(\zeta))$ (see the appendix)). We then developed a code on MATLAB, and calculations for the remaining primes were done by running that MATLAB code. The appendix is available at the link \url{https://drive.google.com/file/d/19EMjw9bQUcUc6d6JeaitE4D7kysKQWtw/view?usp=sharing}. For larger primes, MATLAB took a long time to provide the output after running the code. Nevertheless, we believe the conjecture to be true for all odd primes. As a consequence, we propose another conjecture which too will hold once the above conjecture is proved.

Let $\beta = \sum_{i=0}^{p-2} b_{i} \zeta^{i} \in K$, $D$ be a $(\sigma, \tau)$-derivation of $O_{K}$ and $D(\zeta) = \sum_{i=0}^{p-2} c_{i} \zeta^{i} \in O_{K}$. Put $X^{T} = (b_{0} ~ b_{1} ~ ... ~ b_{p-2})$ and $C^{T} = (c_{0} ~ c_{1} ~ ... ~ c_{p-2})$. We denote by $\mathbb{Z}^{p-1}$ the set of all $(p-1) \times 1$ column matrices with entries from $\mathbb{Z}$. Also, $Adj(A)$ denotes the adjoint of the matrix $A$.

Note that since $\sigma \neq \tau$, therefore, $D(\zeta) = \beta(\tau - \sigma)(\zeta)$ always has a solution $\beta$ in $K$. Furthermore, $D(\zeta) = \beta(\tau - \sigma)(\zeta)$ has a solution $\beta$ in $O_{K}$ if and only if $AX = C$ has a solution in $\mathbb{Z}^{p-1}$.

\begin{conjecture}\th\label{conjecture 3.13}
With the above notations, $D$ is inner if and only if $\frac{1}{p}(Adj(A)C) \in \mathbb{Z}^{p-1}$. In particular, the following conditions hold:
\begin{enumerate}
\item[(i)] If $p$ divides $c_{i}$ for each $i \in \{0, 1, ..., p-2\}$, then $D$ is inner.
\item[(ii)] $O_{K}$ has non-trivial outer $(\sigma, \tau)$-derivations, that is, $\text{Out}_{(\sigma, \tau)}(O_{K}) \neq \{0\}$.
\end{enumerate}
\end{conjecture}

Given below are some interesting examples.

\begin{example} Let $p \geq 5$. Then not every $\mathbb{Z}$-linear map $D:\mathbb{Z}[\zeta] \longrightarrow \mathbb{Z}[\zeta]$ with $D(1)=0$ is a $(\sigma, \tau)$-derivation. In fact, there exists a non-zero $\mathbb{Z}$-linear map $D:\mathbb{Z}[\zeta] \longrightarrow \mathbb{Z}[\zeta]$ with $D(1)=0$ which is not a $(\sigma, \tau)$-derivation for every pair $(\sigma, \tau)$ ($\sigma \neq 0, \tau \neq 0, \sigma \neq \tau$). As an example, define $D:\mathbb{Z}[\zeta] \longrightarrow \mathbb{Z}[\zeta]$ by $D(\sum_{i=0}^{p-2} a_{i} \zeta^{i}) = a_{1} D(\zeta),$ where $D(\zeta) \in O_{K} \setminus \{0\}$, say, $D(\zeta) = \zeta$. Then $D$ is a non-zero well-defined $\mathbb{Z}$-linear map with $D(1) = 0.$ But in view of \th\ref{theorem 3.8}, $D$ is not a $(\sigma, \tau)$-derivation, since $$D(\zeta^{2}) \neq \left(\sum_{(i,j) \in S_{1}} \sigma(\zeta^{i}) \tau(\zeta^{j}) \right) D(\zeta).$$
\end{example}

\begin{example} Let $p = 5$. Let $\sigma$ and $\tau$ be given by $\sigma(\zeta) = \zeta$ and $\tau(\zeta) = \zeta^{2}$.

\textbf{(i)} Define $D:O_{K} \rightarrow O_{K}$ by $$D(\sum_{i=0}^{3} a_{i} \zeta^{i}) = \sum_{i=1}^{3} a_{i} D(\zeta^{i}), \hspace{0.2cm} \forall \hspace{0.2cm} a_{i} \in \mathbb{Z} \hspace{0.1cm} (i \in \{0, 1, 2, 3\}),$$ where for each $i \in \{1, 2, 3\}$, $D(\zeta^{i})$ is defined as $$D(\zeta^{i}) = \left(\sum_{(s,t) \in S_{i-1}}  \sigma(\zeta^{s}) \tau(\zeta^{t})\right)D(\zeta).$$

Note that $D$ is a $\mathbb{Z}$-linear map with $D(1) = 0$. Take $D(\zeta) \in O_{K} \setminus \{0\}$. Then by \th\ref{theorem 3.8}, $D$ is a non-zero $(\sigma, \tau)$-derivation. But as shown below, $D$ is not inner if, in particular, we take $D(\zeta) = \zeta$. Thus, in a cyclotomic field $K$, it is not necessary that every $(\sigma, \tau)$-derivation of its ring of algebraic integers $O_{K}$ is inner.

\textbf{(ii)} If possible, suppose that the $(\sigma, \tau)$-derivation $D:O_{K} \rightarrow O_{K}$ is inner. Then there exists some $\beta \in O_{K}$ such that $D(\alpha) = \beta(\sigma - \tau)(\alpha), \hspace{0.1cm} \forall \hspace{0.1cm} \alpha \in O_{K}$. Since $\beta \in O_{K}$, so $\beta = \sum_{i=0}^{3} b_{i} \zeta^{i}$ for some unique $b_{i} \in \mathbb{Z}$ for all $i \in \{0, 1, 2, 3\}$. 
\begin{eqnarray*}
\beta(\sigma - \tau)(\zeta) & = & \beta (\sigma(\zeta) - \tau(\zeta)) \\ & = & (b_{0} + b_{1} \zeta + b_{2} \zeta^{2} + b_{3} \zeta^{3})(\zeta - \zeta^{2}) \\ & = & (b_{2} - 2b_{3}) + (b_{0} + b_{2} - b_{3}) \zeta + (-b_{0} + b_{1} + b_{2} - b_{3}) \zeta^{2} + (-b_{1} + 2b_{2} - b_{3})\zeta^{3}
\end{eqnarray*}
Now $\beta(\sigma - \tau)(\zeta) = D(\zeta)$ with $D(\zeta) = \zeta$ gives $$(b_{2} - 2b_{3}) + (b_{0} + b_{2} - b_{3}) \zeta + (-b_{0} + b_{1} + b_{2} - b_{3}) \zeta^{2} + (-b_{1} + 2b_{2} - b_{3})\zeta^{3} = 0 + 1 \zeta + 0 \zeta^{2} + 0 \zeta^{3}.$$

\noindent $\Rightarrow b_{2} - 2b_{3} = 0$; $b_{0} + b_{2} - b_{3} = 1$; $-b_{0} + b_{1} + b_{2} - b_{3} = 0$; $-b_{1} + 2b_{2} - b_{3} = 0$.

\noindent This system of linear equations can be written as $AX = C$, where $$A = \begin{pmatrix}
0 & 0 & 1 & -2 \\
1 & 0 & 1 & -1 \\
-1 & 1 & 1 & -1 \\
0 & -1 & 2 & -1
\end{pmatrix}, ~ X^{T} = \begin{pmatrix}
b_{0} & b_{1} & b_{2} & b_{3} 
\end{pmatrix} ~ \text{and} ~ C^{T} = \begin{pmatrix}
0 & 1 & 0 & 0
\end{pmatrix}.$$
This can have a (unique) integral solution if and only if $A$ is a unimodular matrix, that is, the determinant of $A$ is either $1$ or $-1$. But the determinant of $A$ is $5$. Therefore, there is no integer solution. Hence, a contradiction.
Therefore, the given $(\sigma, \tau)$-derivation $D:O_{K} \rightarrow O_{K}$ is not inner for $D(\zeta) = \zeta$.
\end{example}

\subsection{\texorpdfstring{$(\sigma, \tau)$}{Lg}-Derivations of Bi-quadratic Number Rings}\label{subsection 3.3}
Here we present an elegant application of \th\ref{lemma 2.1} in classifying all $(\sigma, \tau)$-derivations and inner $(\sigma, \tau)$-derivations of the bi-quadratic number ring $\mathbb{Z}[\sqrt{m}, \sqrt{n}]$. Throughout this subsection, $K$ denotes the quartic number field $K = \mathbb{Q}(\sqrt{m}, \sqrt{n})$, where $m$ and $n$ are distinct square-free rational integers, $\mathcal{A}$ the number ring $\mathcal{A} = \mathbb{Z}[\sqrt{m}, \sqrt{n}]$ with $\mathbb{Z}$-basis $\{1, \sqrt{m}, \sqrt{n}, \sqrt{mn}\}$, and $\sigma$ and $\tau$ any two non-zero ring endomorphisms of $\mathcal{A}$.

\begin{lemma}\th\label{lemma 3.16}
If $D:\mathcal{A} \rightarrow \mathcal{A}$ is a non-zero $(\sigma, \tau)$-derivation of $\mathcal{A}$, then precisely one case amongst (i), (ii), or (iii) holds.
\begin{enumerate}
\item[(i)] $D(\sqrt{m}) = 0$, $D(\sqrt{n}) \neq 0$, $D(\sqrt{mn}) \neq 0$ and $D(\sqrt{mn}) = \pm \sqrt{m} D(\sqrt{n})$. Further, in this case, the possible values of the pair $(\sigma, \tau)$ are $(\phi_{1}, \phi_{2}), (\phi_{2}, \phi_{1}), (\phi_{3}, \phi_{4}), (\phi_{4}, \phi_{3})$. 

In fact, $D(\sqrt{mn}) = \sqrt{m} D(\sqrt{n})$ if $(\sigma, \tau) = (\phi_{1}, \phi_{2})$ or $(\phi_{2}, \phi_{1})$ and $D(\sqrt{mn}) \\ = - \sqrt{m} D(\sqrt{n})$ if $(\sigma, \tau) = (\phi_{3}, \phi_{4})$ or $(\phi_{4}, \phi_{3})$.\vspace{0.2cm}

\item[(ii)] $D(\sqrt{m}) \neq 0$, $D(\sqrt{n}) = 0$, $D(\sqrt{mn}) \neq 0$ and $D(\sqrt{mn}) = \pm \sqrt{n} D(\sqrt{m})$. Further, in this case, the possible values of the pair $(\sigma, \tau)$ are $(\phi_{1}, \phi_{3}), (\phi_{2}, \phi_{4}), (\phi_{3}, \phi_{1}), (\phi_{4}, \phi_{2})$.

In fact, $D(\sqrt{mn}) = \sqrt{n} D(\sqrt{m})$ if $(\sigma, \tau) = (\phi_{1}, \phi_{3})$ or $(\phi_{3}, \phi_{1})$ and $D(\sqrt{mn}) \\ = - \sqrt{n} D(\sqrt{m})$ if $(\sigma, \tau) = (\phi_{2}, \phi_{4})$ or $(\phi_{4}, \phi_{2})$.\vspace{0.2cm}

\item[(iii)] $D(\sqrt{m}) \neq 0$, $D(\sqrt{n}) \neq 0$, $D(\sqrt{mn}) = 0$ and $\sqrt{m} D(\sqrt{n}) = \pm \sqrt{n} D(\sqrt{m})$. Further, in this case, the possible values of the pair $(\sigma, \tau)$ are $(\phi_{1}, \phi_{4}), (\phi_{2}, \phi_{3}), (\phi_{4}, \phi_{1}), (\phi_{3}, \phi_{2})$.

In fact, $\sqrt{m} D(\sqrt{n}) = \sqrt{n} D(\sqrt{m})$ if $(\sigma, \tau) = (\phi_{1}, \phi_{4})$ or $(\phi_{4}, \phi_{1})$ and $\sqrt{m} D(\sqrt{n}) \\ = - \sqrt{n} D(\sqrt{m})$ if $(\sigma, \tau) = (\phi_{2}, \phi_{3})$ or $(\phi_{3}, \phi_{2})$.
\end{enumerate} 

Here $\phi_{1}, \phi_{2}, \phi_{3}, \phi_{4}$ are the non-zero ring endomorphisms of $\mathcal{A} = \mathbb{Z}[\sqrt{m}, \sqrt{n}]$ given by 
\begin{enumerate}[noitemsep]
\item[(a)] $\phi_{1}(\sqrt{m}) = \sqrt{m}$, $\phi_{1}(\sqrt{n}) = \sqrt{n}$.

\item[(b)] $\phi_{2}(\sqrt{m}) = \sqrt{m}$, $\phi_{2}(\sqrt{n}) = - \sqrt{n}$.

\item[(c)] $\phi_{3}(\sqrt{m}) = - \sqrt{m}$, $\phi_{3}(\sqrt{n}) = \sqrt{n}$.

\item[(d)] $\phi_{4}(\sqrt{m}) = - \sqrt{m}$, $\phi_{4}(\sqrt{n}) = - \sqrt{n}$.
\end{enumerate}
\end{lemma}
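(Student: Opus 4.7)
The plan is to extract pointwise vanishing constraints on $D$ from the basis-level derivation rule of \th\ref{lemma 2.1}, applied to the squaring identities $(\sqrt{m})^{2}=m$, $(\sqrt{n})^{2}=n$, and $(\sqrt{mn})^{2}=mn$. Since $\sigma,\tau$ are unital we have $D(1)=0$, so $\mathbb{Z}$-linearity of $D$ forces $D(m)=D(n)=D(mn)=0$. On the other side, the product rule gives $D(\gamma^{2})=(\sigma(\gamma)+\tau(\gamma))D(\gamma)$ for each $\gamma\in\{\sqrt{m},\sqrt{n},\sqrt{mn}\}$. As $\mathcal{A}\subset\mathbb{C}$ is an integral domain, each identity yields the dichotomy $D(\gamma)=0$ or $\sigma(\gamma)=-\tau(\gamma)$.

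Next I would enumerate the ring endomorphisms of $\mathcal{A}$: any such $\theta$ must satisfy $\theta(\sqrt{m})^{2}=m$ and $\theta(\sqrt{n})^{2}=n$, so $\theta\in\{\phi_{1},\phi_{2},\phi_{3},\phi_{4}\}$, and multiplicativity pins down $\phi_{i}(\sqrt{mn})=\phi_{i}(\sqrt{m})\phi_{i}(\sqrt{n})$, giving $\phi_{1}(\sqrt{mn})=\phi_{4}(\sqrt{mn})=\sqrt{mn}$ and $\phi_{2}(\sqrt{mn})=\phi_{3}(\sqrt{mn})=-\sqrt{mn}$. Thus the condition $\sigma(\gamma)=-\tau(\gamma)$ is equivalent to \emph{$\sigma$ and $\tau$ disagree on $\gamma$}, for each $\gamma\in\{\sqrt{m},\sqrt{n},\sqrt{mn}\}$.

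I would then eliminate the degenerate configurations for the triple $\bigl(D(\sqrt{m}),D(\sqrt{n}),D(\sqrt{mn})\bigr)$. It cannot consist of all zeros, since $D\neq 0$ on $\mathcal{A}$ and $D(1)=0$. It cannot have all three non-zero either: disagreement on both $\sqrt{m}$ and $\sqrt{n}$ would force
\[
\sigma(\sqrt{mn})=\sigma(\sqrt{m})\sigma(\sqrt{n})=\tau(\sqrt{m})\tau(\sqrt{n})=\tau(\sqrt{mn}),
\]
contradicting disagreement on $\sqrt{mn}$ (where $\tau(\sqrt{mn})\neq 0$). Having exactly one non-zero entry is also ruled out: if for example $D(\sqrt{m})\neq 0$ and $D(\sqrt{n})=D(\sqrt{mn})=0$, then the product rule on $\sqrt{mn}=\sqrt{m}\cdot\sqrt{n}$ gives $0=D(\sqrt{mn})=D(\sqrt{m})\tau(\sqrt{n})$, impossible since $\tau(\sqrt{n})\neq 0$; the two symmetric sub-cases fail identically. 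Hence exactly two of the three values are non-zero, yielding precisely the three cases (i), (ii), (iii).

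Finally, in each case the agreement/disagreement pattern pins down the four admissible pairs $(\sigma,\tau)$, and the sign relation is recovered by applying the product rule to $\sqrt{mn}=\sqrt{m}\cdot\sqrt{n}$ one more time. In case (i), $D(\sqrt{mn})=\sigma(\sqrt{m})D(\sqrt{n})=\pm\sqrt{m}\,D(\sqrt{n})$, with sign $+$ when $\sigma(\sqrt{m})=\sqrt{m}$ (i.e., $\sigma\in\{\phi_{1},\phi_{2}\}$) and $-$ when $\sigma(\sqrt{m})=-\sqrt{m}$; case (ii) is the $m\leftrightarrow n$ mirror of case (i); in case (iii), the identity $D(\sqrt{mn})=0$ rewrites as $\sigma(\sqrt{m})D(\sqrt{n})=-D(\sqrt{m})\tau(\sqrt{n})$, and substituting the four $(\sigma,\tau)$ choices produces the claimed $\pm$ between $\sqrt{m}D(\sqrt{n})$ and $\sqrt{n}D(\sqrt{m})$. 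The conceptual heart of the argument is the dichotomy from the integral-domain step together with the multiplicativity of $\phi_{i}$ on $\sqrt{mn}$; the only mild obstacle is the sign bookkeeping in this last paragraph, which is purely mechanical but must be executed consistently over all twelve admissible $(\sigma,\tau)$ pairs.
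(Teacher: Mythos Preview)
Your proposal is correct and follows essentially the same approach as the paper: both use \th\ref{lemma 2.1} to extract the three squaring identities $(\sigma(\gamma)+\tau(\gamma))D(\gamma)=0$ and the product rule on $\sqrt{mn}=\sqrt{m}\cdot\sqrt{n}$, invoke the integral-domain dichotomy, enumerate the four endomorphisms, and then eliminate the degenerate zero/nonzero configurations before reading off the sign relations case by case. Your presentation is a bit more compressed---you exploit the multiplicativity $\epsilon_{\sqrt{mn}}=\epsilon_{\sqrt{m}}\epsilon_{\sqrt{n}}$ to handle the parity of the ``disagreement'' set conceptually, whereas the paper writes out all nine basis-level equations and a table of endomorphism values explicitly---but the underlying argument is the same.
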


\begin{proof}
If $\phi$ is a non-zero ring endomorphisms of $\mathcal{A}$, then $(\phi(\sqrt{m}))^{2} = m \hspace{0.2cm} \text{and} \hspace{0.2cm} (\phi(\sqrt{n}))^{2} = n$. So $\phi(\sqrt{m}) = \pm \sqrt{m}$ and $\phi(\sqrt{n}) = \pm \sqrt{n}.$ Therefore, $\mathcal{A}$ has precisely the four non-zero ring endomorphisms $\phi_{1}, \phi_{2}, \phi_{3},  \phi_{4}$ given in the table below. 

\begin{table}[H] 
\caption{}
\centering
\begin{tabular}{|c||c|c|c|c|}
\hline 
 & $1$ & $\sqrt{m}$ & $\sqrt{n}$ & $\sqrt{mn}$ \\ 
\hline \hline
$\phi_{1}$ & $1$ & $\sqrt{m}$ & $\sqrt{n}$ & $\sqrt{mn}$ \\ 
\hline 
$\phi_{2}$ & $1$ & $\sqrt{m}$ & $- \sqrt{n}$ & $- \sqrt{mn}$ \\ 
\hline 
$\phi_{3}$ & $1$ & $- \sqrt{m}$ & $\sqrt{n}$ & $- \sqrt{mn}$ \\ 
\hline 
$\phi_{4}$ & $1$ & $- \sqrt{m}$ & $- \sqrt{n}$ & $\sqrt{mn}$ \\ 
\hline 
\end{tabular}
\label{table 1}
\end{table}

Put $\alpha_{1} = 1$, $\alpha_{2} = \sqrt{m}$, $\alpha_{3} = \sqrt{n}$ and $\alpha_{4} = \sqrt{mn}$. Then by \th\ref{lemma 2.1}, $D(\alpha_{i} \alpha_{j}) = D(\alpha_{i}) \tau(\alpha_{j}) + \sigma(\alpha_{i}) D(\alpha_{j}), \hspace{0.1cm} \forall \hspace{0.1cm} i, j \in \{1, 2, 3, 4\}$. Also, $D$ is $\mathbb{Z}$-linear. So we get the following table \ref{table 2}.

\begin{table}[H]
\caption{}
\centering
\begin{tabular}{|c|c|c|c|c|}
\hline 
$(i,j)$ & $\alpha_{i}$ & $\alpha_{j}$ & $D(\alpha_{i} \alpha_{j})$ & $D(\alpha_{i}) \tau(\alpha_{j}) + \sigma(\alpha_{i}) D(\alpha_{j})$ \\ 
\hline \hline
$(1, 1)$ & $1$ & $1$ & $0$ & $0$ \\ 
\hline 
$(1, 2)$ & $1$ & $\sqrt{m}$ & $D(\sqrt{m})$ & $D(\sqrt{m})$ \\ 
\hline 
$(1, 3)$ & $1$ & $\sqrt{n}$ & $D(\sqrt{n})$ & $D(\sqrt{n})$ \\ 
\hline 
$(1, 4)$ & $1$ & $\sqrt{mn}$ & $D(\sqrt{mn})$ & $D(\sqrt{mn})$ \\ 
\hline 
$(2, 1)$ & $\sqrt{m}$ & $1$ & $D(\sqrt{m})$ & $D(\sqrt{m})$ \\ 
\hline 
$(2, 2)$ & $\sqrt{m}$ & $\sqrt{m}$ & $0$ & $(\sigma(\sqrt{m}) + \tau(\sqrt{m}))D(\sqrt{m})$ \\ 
\hline 
$(2, 3)$ & $\sqrt{m}$ & $\sqrt{n}$ & $D(\sqrt{mn})$ & $D(\sqrt{m}) \tau(\sqrt{n}) + \sigma(\sqrt{m}) D(\sqrt{n})$ \\ 
\hline 
$(2, 4)$ & $\sqrt{m}$ & $\sqrt{mn}$ & $mD(\sqrt{n})$ & $D(\sqrt{m}) \tau(\sqrt{mn}) + \sigma(\sqrt{m}) D(\sqrt{mn})$ \\ 
\hline 
$(3, 1)$ & $\sqrt{n}$ & $1$ & $D(\sqrt{n})$ & $D(\sqrt{n})$ \\ 
\hline 
$(3, 2)$ & $\sqrt{n}$ & $\sqrt{m}$ & $D(\sqrt{mn})$ & $D(\sqrt{n}) \tau(\sqrt{m}) + \sigma(\sqrt{n}) D(\sqrt{m})$ \\ 
\hline 
$(3, 3)$ & $\sqrt{n}$ & $\sqrt{n}$ & $0$ & $(\sigma(\sqrt{n}) + \tau(\sqrt{n}))D(\sqrt{n})$ \\ 
\hline 
$(3, 4)$ & $\sqrt{n}$ & $\sqrt{mn}$ & $n D(\sqrt{m})$ & $D(\sqrt{n}) \tau(\sqrt{mn}) + \sigma(\sqrt{n}) D(\sqrt{mn})$ \\ 
\hline 
$(4, 1)$ & $\sqrt{mn}$ & $1$ & $D(\sqrt{mn})$ & $D(\sqrt{mn})$ \\ 
\hline 
$(4, 2)$ & $\sqrt{mn}$ & $\sqrt{m}$ & $mD(\sqrt{n})$ & $D(\sqrt{mn}) \tau(\sqrt{m}) + \sigma(\sqrt{mn}) D(\sqrt{m})$ \\ 
\hline 
$(4, 3)$ & $\sqrt{mn}$ & $\sqrt{n}$ & $nD(\sqrt{m})$ & $D(\sqrt{mn}) \tau(\sqrt{n}) + \sigma(\sqrt{mn}) D(\sqrt{n})$ \\ 
\hline 
$(4, 4)$ & $\sqrt{mn}$ & $\sqrt{mn}$ & $0$ & $(\sigma(\sqrt{mn}) + \tau(\sqrt{mn}))D(\sqrt{mn})$ \\ 
\hline 
\end{tabular}
\label{table 2}
\end{table}

From Table \ref{table 2} containing the values of $D(\alpha_{i} \alpha_{j})$ and $D(\alpha_{i}) \tau(\alpha_{j}) + \sigma(\alpha_{i}) D(\alpha_{j})$ ($i, j \in \{1, 2, 3, 4\}$), we see that the relation $D(\alpha_{i} \alpha_{j}) = D(\alpha_{i}) \tau(\alpha_{j}) + \sigma(\alpha_{i}) D(\alpha_{j})$ is being satisfied by the pairs $$(i,j) = (1,1), (1,2), (1,3), (1,4), (2,1), (3,1), (4,1).$$ Then using the fact that the $\mathbb{Z}$-algebra $\mathcal{A} = \mathbb{Z}[\sqrt{m}, \sqrt{n}]$ is commutative, we get that $D$ will be a $(\sigma, \tau)$-derivation if and only if it satisfies the following nine equations simultaneously. \begin{equation} \label{eq 3.4}
(\sigma(\sqrt{m}) + \tau(\sqrt{m}))D(\sqrt{m}) = 0
\end{equation}

\begin{equation} \label{eq 3.5}
D(\sqrt{m}) \tau(\sqrt{n}) + \sigma(\sqrt{m}) D(\sqrt{n}) = D(\sqrt{mn})
\end{equation}

\begin{equation} \label{eq 3.6}
D(\sqrt{m}) \tau(\sqrt{mn}) + \sigma(\sqrt{m}) D(\sqrt{mn}) = m D(\sqrt{n})
\end{equation}

\begin{equation} \label{eq 3.7}
(\sigma(\sqrt{n}) + \tau(\sqrt{n}))D(\sqrt{n}) = 0
\end{equation}

\begin{equation} \label{eq 3.8}
D(\sqrt{n}) \tau(\sqrt{mn}) + \sigma(\sqrt{n}) D(\sqrt{mn}) =  n D(\sqrt{m})
\end{equation}

\begin{equation} \label{eq 3.9}
(\sigma(\sqrt{mn}) + \tau(\sqrt{mn}))D(\sqrt{mn}) = 0
\end{equation}

\begin{equation} \label{eq 3.10}
D(\sqrt{n}) \tau(\sqrt{m}) + \sigma(\sqrt{n}) D(\sqrt{m}) = D(\sqrt{mn})
\end{equation}

\begin{equation} \label{eq 3.11}
D(\sqrt{mn}) \tau(\sqrt{m}) + \sigma(\sqrt{mn}) D(\sqrt{m}) =  m D(\sqrt{n})
\end{equation}

\begin{equation} \label{eq 3.12}
D(\sqrt{mn}) \tau(\sqrt{n}) + \sigma(\sqrt{mn}) D(\sqrt{n}) = n D(\sqrt{m})
\end{equation}

Since $\mathcal{A} = \mathbb{Z}[\sqrt{m}, \sqrt{n}]$ is an integral domain, so in view of equations (\ref{eq 3.4}), (\ref{eq 3.7}) and (\ref{eq 3.9}), there are four possible cases.\vspace{0.2cm}

\textbf{Case 1:} All of $D(\sqrt{m})$, $D(\sqrt{n})$ and $D(\sqrt{mn})$ are non-zero.

Since $\mathcal{A} = \mathbb{Z}[\sqrt{m}, \sqrt{n}]$ is an integral domain, therefore, from (\ref{eq 3.4}), (\ref{eq 3.7}) and (\ref{eq 3.9}), $$\tau(\sqrt{m}) = - \sigma(\sqrt{m}), \hspace{0.2cm} \tau(\sqrt{n}) = - \sigma(\sqrt{n}) \hspace{0.2cm} \text{and} \hspace{0.2cm} \tau(\sqrt{mn}) = - \sigma(\sqrt{mn}).$$

But from Table \ref{table 1}, we observe that there is no such pair $(\sigma, \tau)$ which satisfies the above three equations simultaneously.
Therefore, this case is not possible.\vspace{0.2cm}

\textbf{Case 2:} Exactly one of $D(\sqrt{m})$, $D(\sqrt{n})$ and $D(\sqrt{mn})$ is non-zero.

Then there are three possible subcases.

\textbf{Subcase 2.1:} $D(\sqrt{m}) = 0$, $D(\sqrt{n}) \neq 0$, $D(\sqrt{mn}) \neq 0$.

By equations (\ref{eq 3.7}) and (\ref{eq 3.9}) respectively, $\tau(\sqrt{n}) = - \sigma(\sqrt{n})$ and $\tau(\sqrt{mn}) = - \sigma(\sqrt{mn})$. Also, by  (\ref{eq 3.5}) and (\ref{eq 3.10}), $\tau(\sqrt{m}) = \sigma(\sqrt{m})$.
Therefore, in view of Table \ref{table 1}, the possible values of the pair $(\sigma, \tau)$ are $$(\phi_{1}, \phi_{2}), \hspace{0.2cm} (\phi_{2}, \phi_{1}), \hspace{0.2cm} (\phi_{3}, \phi_{4}), \hspace{0.2cm} (\phi_{4}, \phi_{3}).$$

Further, from (\ref{eq 3.5}), $D(\sqrt{mn}) = \sigma(\sqrt{m}) D(\sqrt{n})$ and $\sigma(\sqrt{m}) = \pm \sqrt{m}$, therefore, $D$ must satisfy $D(\sqrt{mn}) = \pm \sqrt{m} D(\sqrt{n})$. More precisely, we have the following table \ref{table 3}:

\begin{table}[H]
\caption{}
\centering
\begin{tabular}{|c|c|c|c|}
\hline 
$(\sigma, \tau)$ & $D(\sqrt{mn}) = \pm \sqrt{m} D(\sqrt{n})?$ \\
\hline \hline
$(\phi_{1}, \phi_{2})$ & $D(\sqrt{mn}) = \sqrt{m} D(\sqrt{n})$ \\ 
\hline 
$(\phi_{2}, \phi_{1})$ & $D(\sqrt{mn}) = \sqrt{m} D(\sqrt{n})$ \\ 
\hline 
$(\phi_{3}, \phi_{4})$ & $D(\sqrt{mn}) = - \sqrt{m} D(\sqrt{n})$ \\ 
\hline 
$(\phi_{4}, \phi_{3})$ & $D(\sqrt{mn}) = - \sqrt{m} D(\sqrt{n})$ \\ 
\hline 
\end{tabular}
\label{table 3}
\end{table}

\textbf{Subcase 2.2:} $D(\sqrt{m}) \neq 0$, $D(\sqrt{n}) = 0$, $D(\sqrt{mn}) \neq 0$.

By equations (\ref{eq 3.4}) and (\ref{eq 3.9}) respectively, $\tau(\sqrt{m}) = - \sigma(\sqrt{m})$ and $\tau(\sqrt{mn}) = - \sigma(\sqrt{mn})$. Also, by (\ref{eq 3.5}) and (\ref{eq 3.10}), $\tau(\sqrt{n}) = \sigma(\sqrt{n})$.
Therefore, in view of Table \ref{table 1}, the possible values of the pair $(\sigma, \tau)$ are $$(\phi_{1}, \phi_{3}), \hspace{0.2cm} (\phi_{2}, \phi_{4}), \hspace{0.2cm} (\phi_{3}, \phi_{1}), \hspace{0.2cm} (\phi_{4}, \phi_{2}).$$

Further from (\ref{eq 3.10}), $D(\sqrt{mn}) = \sigma(\sqrt{n}) D(\sqrt{m})$ and $\sigma(\sqrt{n}) = \pm \sqrt{n}$, therefore, $D$ must satisfy $D(\sqrt{mn}) = \pm \sqrt{n} D(\sqrt{m})$. More precisely, we have the following table \ref{table 4}:

\begin{table}[H]
\caption{}
\centering
\begin{tabular}{|c|c|c|c|}
\hline 
$(\sigma, \tau)$ & $D(\sqrt{mn}) = \pm \sqrt{n} D(\sqrt{m})?$ \\
\hline \hline 
$(\phi_{1}, \phi_{3})$ & $D(\sqrt{mn}) = \sqrt{n} D(\sqrt{m})$ \\ 
\hline 
$(\phi_{2}, \phi_{4})$ & $D(\sqrt{mn}) = - \sqrt{n} D(\sqrt{m})$ \\ 
\hline 
$(\phi_{3}, \phi_{1})$ & $D(\sqrt{mn}) = \sqrt{n} D(\sqrt{m})$ \\ 
\hline 
$(\phi_{4}, \phi_{2})$ & $D(\sqrt{mn}) = - \sqrt{n} D(\sqrt{m})$ \\ 
\hline 
\end{tabular}
\label{table 4}
\end{table}

\textbf{Subcase 2.3:} $D(\sqrt{m}) \neq 0$, $D(\sqrt{n}) \neq 0$, $D(\sqrt{mn}) = 0$.

By equations (\ref{eq 3.4}) and (\ref{eq 3.7}) respectively, $\tau(\sqrt{m}) = - \sigma(\sqrt{m})$ and $\tau(\sqrt{n}) = - \sigma(\sqrt{n})$. Also, by equations (\ref{eq 3.6}) and (\ref{eq 3.11}), $\tau(\sqrt{mn}) = \sigma(\sqrt{mn})$. Therefore, in view of Table \ref{table 1}, the possible values of the pair $(\sigma, \tau)$ are $$(\phi_{1}, \phi_{4}), \hspace{0.2cm} (\phi_{2}, \phi_{3}), \hspace{0.2cm} (\phi_{4}, \phi_{1}), \hspace{0.2cm} (\phi_{3}, \phi_{2}).$$

Further from equation (\ref{eq 3.11}), $m D(\sqrt{n}) = \sigma(\sqrt{mn}) D(\sqrt{m})$ and $\sigma(\sqrt{mn}) = \pm \sqrt{mn}$, therefore, $D$ must satisfy $m D(\sqrt{n}) = \pm \sqrt{mn} D(\sqrt{m})$ so that $\sqrt{m} D(\sqrt{n}) = \pm \sqrt{n} D(\sqrt{m})$. More precisely, we have the following table \ref{table 5}:

\begin{table}[H]
\caption{}
\centering
\begin{tabular}{|c|c|c|c|}
\hline 
$(\sigma, \tau)$ & $\sqrt{m} D(\sqrt{n}) = \sqrt{n} D(\sqrt{m})?$ \\
\hline \hline
$(\phi_{1}, \phi_{4})$ & $\sqrt{m} D(\sqrt{n}) = \sqrt{n} D(\sqrt{m})$ \\ 
\hline 
$(\phi_{2}, \phi_{3})$ & $\sqrt{m} D(\sqrt{n}) = - \sqrt{n} D(\sqrt{m})$ \\ 
\hline 
$(\phi_{3}, \phi_{2})$ & $\sqrt{m} D(\sqrt{n}) = - \sqrt{n} D(\sqrt{m})$ \\ 
\hline 
$(\phi_{4}, \phi_{1})$ & $\sqrt{m} D(\sqrt{n}) = \sqrt{n} D(\sqrt{m})$ \\ 
\hline 
\end{tabular}
\label{table 5}
\end{table}

\textbf{Case 3:} Exactly two of $D(\sqrt{m})$, $D(\sqrt{n})$ or $D(\sqrt{mn})$ are $0$.

Then there are three possible subcases.

\textbf{Subcase 3.1:} $D(\sqrt{m}) = 0$, $D(\sqrt{n}) = 0$, $D(\sqrt{mn}) \neq 0$.

\textbf{Subcase 3.2:} $D(\sqrt{m}) = 0$, $D(\sqrt{n}) \neq 0$, $D(\sqrt{mn}) = 0$.

\textbf{Subcase 3.3:} $D(\sqrt{m}) \neq 0$, $D(\sqrt{n}) = 0$, $D(\sqrt{mn}) = 0$.

Then in each of the three possible subcases, $D$ will become a zero derivation using equations (\ref{eq 3.4}) to (\ref{eq 3.12}) and the facts that $\{1, \sqrt{m}, \sqrt{n}, \sqrt{mn}\}$ is a $\mathbb{Z}$-basis of the $\mathbb{Z}$-module $\mathcal{A} = \mathbb{Z}[\sqrt{m}, \sqrt{n}]$ and $D:\mathcal{A} \rightarrow \mathcal{A}$ is $\mathbb{Z}$-linear with $D(1)=0$. But this gives a contradiction because $D$ is non-zero. Therefore, this case is not possible.\vspace{0.2cm}

\textbf{Case 4:} All $D(\sqrt{p})$, $D(\sqrt{q})$ and $D(\sqrt{pq})$ are zero.

The proof is similar to Case 3.
\end{proof}

The proof of the following lemma is trivial using \th\ref{lemma 2.1}.
\begin{lemma}\th\label{lemma 3.17}
Let $D:\mathcal{A} \rightarrow \mathcal{A}$ be any $\mathbb{Z}$-linear map with $D(1)=0$. Then $D$ is a $(\sigma, \tau)$-derivation in the following cases.
\begin{enumerate}
\item[(i)] $D(\sqrt{m}) = 0$, $D(\sqrt{n}) \neq 0$ and $D(\sqrt{mn}) \neq 0$.
\begin{itemize}
\item[(a)] If $D(\sqrt{mn}) = \sqrt{m} D(\sqrt{n})$ and $(\sigma, \tau) =  (\phi_{1}, \phi_{2})$ or $(\phi_{2}, \phi_{1})$.

\item[(b)] If $D(\sqrt{mn}) = - \sqrt{m} D(\sqrt{n})$ and $(\sigma, \tau) =  (\phi_{3}, \phi_{4})$ or $(\phi_{4}, \phi_{3})$.
\end{itemize}

\item[(ii)] $D(\sqrt{m}) \neq 0$, $D(\sqrt{n}) = 0$ and $D(\sqrt{mn}) \neq 0$.
\begin{itemize}
\item[(a)] If $D(\sqrt{mn}) = \sqrt{n} D(\sqrt{m})$ and $(\sigma, \tau) =  (\phi_{1}, \phi_{3})$ or $(\phi_{3}, \phi_{1})$.

\item[(b)] If $D(\sqrt{mn}) = - \sqrt{n} D(\sqrt{m})$ and $(\sigma, \tau) =  (\phi_{2}, \phi_{4})$ or $(\phi_{4}, \phi_{2})$.
\end{itemize}

\item[(iii)] $D(\sqrt{m}) \neq 0$, $D(\sqrt{n}) \neq 0$ and $D(\sqrt{mn}) = 0$.
\begin{itemize}
\item[(a)] If $\sqrt{m} D(\sqrt{n}) = \sqrt{n} D(\sqrt{m})$ and $(\sigma, \tau) =  (\phi_{1}, \phi_{4})$ or $(\phi_{4}, \phi_{1})$.

\item[(b)] If $\sqrt{m} D(\sqrt{n}) = - \sqrt{n} D(\sqrt{m})$ and $(\sigma, \tau) =  (\phi_{2}, \phi_{3})$ or $(\phi_{3}, \phi_{2})$.
\end{itemize}
\end{enumerate}
\end{lemma}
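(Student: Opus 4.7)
The plan is to prove \th\ref{lemma 3.17} as the converse counterpart of \th\ref{lemma 3.16}, and it will follow the same reduction but run in the opposite direction. By \th\ref{lemma 2.1}, with $\alpha_{1}=1, \alpha_{2}=\sqrt{m}, \alpha_{3}=\sqrt{n}, \alpha_{4}=\sqrt{mn}$, the $\mathbb{Z}$-linear map $D$ is a $(\sigma,\tau)$-derivation if and only if $D(\alpha_{i}\alpha_{j})=D(\alpha_{i})\tau(\alpha_{j})+\sigma(\alpha_{i})D(\alpha_{j})$ for all $i,j\in\{1,2,3,4\}$. Since $D(1)=0$ and $\sigma(1)=\tau(1)=1$, all the pairs involving $\alpha_{1}=1$ are automatic, and by commutativity of $\mathcal{A}$ the pairs $(i,j)$ and $(j,i)$ give the same equation. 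Thus we only need to verify the nine equations \eqref{eq 3.4}–\eqref{eq 3.12} already catalogued inside the proof of \th\ref{lemma 3.16}.

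The strategy is therefore, in each of the six subcases (i)(a), (i)(b), (ii)(a), (ii)(b), (iii)(a), (iii)(b), to read off the values of $\sigma(\sqrt{m}),\sigma(\sqrt{n}),\sigma(\sqrt{mn}),\tau(\sqrt{m}),\tau(\sqrt{n}),\tau(\sqrt{mn})$ from Table \ref{table 1}, substitute them together with the hypothesis on $D(\sqrt{m}), D(\sqrt{n}), D(\sqrt{mn})$ into each of \eqref{eq 3.4}–\eqref{eq 3.12}, and confirm the identity. For example in case (i)(a) with $(\sigma,\tau)=(\phi_{1},\phi_{2})$ and $D(\sqrt{mn})=\sqrt{m}\,D(\sqrt{n})$, equations \eqref{eq 3.4}, \eqref{eq 3.7}, \eqref{eq 3.9} vanish because the respective hypothesis makes at least one factor zero ($D(\sqrt{m})=0$ and $\sigma(\sqrt{n})+\tau(\sqrt{n})=\sqrt{n}-\sqrt{n}=0=\sigma(\sqrt{mn})+\tau(\sqrt{mn})$); \eqref{eq 3.5} and \eqref{eq 3.10} both reduce to $\sqrt{m}\,D(\sqrt{n})=D(\sqrt{mn})$, which is the hypothesis; \eqref{eq 3.6} and \eqref{eq 3.11} reduce to $\sqrt{m}\cdot\sqrt{m}\,D(\sqrt{n})=mD(\sqrt{n})$; and \eqref{eq 3.8}, \eqref{eq 3.12} collapse to $-\sqrt{mn}\,D(\sqrt{n})+\sqrt{mn}\,D(\sqrt{n})=0=nD(\sqrt{m})$. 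The remaining subcases are symmetric: the roles of the three pairs of equations split the same way, with the only difference being which of $D(\sqrt{m}), D(\sqrt{n}), D(\sqrt{mn})$ vanishes and which pair of ring endomorphisms is in play.

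What makes the proof essentially mechanical is the observation that in every subcase exactly one of $D(\sqrt{m}), D(\sqrt{n}), D(\sqrt{mn})$ is forced to vanish, and the choice of $(\sigma,\tau)$ from the list makes exactly two of the three coefficients $\sigma(\alpha_{k})+\tau(\alpha_{k})$ (for $k=2,3,4$) vanish as well, so that \eqref{eq 3.4}, \eqref{eq 3.7}, \eqref{eq 3.9} are automatic; the remaining six equations then split into three pairs, each pair being equivalent to the single sign condition stipulated in the hypothesis. I would present the computation compactly, handling case (i) in full and indicating that cases (ii) and (iii) are obtained by interchanging the roles of $\sqrt{m}$ and $\sqrt{n}$, respectively of $\sqrt{m}\sqrt{n}=\sqrt{mn}$ with one of $\sqrt{m}, \sqrt{n}$.

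The main obstacle is bookkeeping rather than mathematics: keeping careful track of signs across the four pairs $(\sigma,\tau)\in\{(\phi_{1},\phi_{2}),(\phi_{2},\phi_{1}),(\phi_{3},\phi_{4}),(\phi_{4},\phi_{3})\}$ in case (i) (and analogously in cases (ii), (iii)) so that the sign appearing in the hypothesis $D(\sqrt{mn})=\pm\sqrt{m}\,D(\sqrt{n})$ matches the sign coming from $\sigma(\sqrt{m})=\pm\sqrt{m}$ in \eqref{eq 3.5}–\eqref{eq 3.12}. Once one organises the six subcases in the same tabular form as Tables \ref{table 3}, \ref{table 4}, \ref{table 5} used in the proof of \th\ref{lemma 3.16}, the verification becomes a routine one-line check per equation and the lemma is immediate from \th\ref{lemma 2.1}.
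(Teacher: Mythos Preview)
Your proposal is correct and follows exactly the route the paper intends: the paper's own proof says only that the lemma ``is trivial using \th\ref{lemma 2.1}'', and what you have written is precisely the mechanical verification of equations \eqref{eq 3.4}--\eqref{eq 3.12} that this entails. Your explicit treatment of case (i)(a) and the symmetry remarks for the remaining subcases are an appropriate expansion of that one-line proof.
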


\begin{theorem}\th\label{theorem 3.18}
A non-zero $\mathbb{Z}$-linear map $D:\mathcal{A} \rightarrow \mathcal{A}$ with $D(1)=0$ is a $(\sigma, \tau)$-derivation if and only if exactly one of the cases (i), (ii) or (iii) of \th\ref{lemma 3.16} holds.
\end{theorem}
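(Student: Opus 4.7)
The theorem is essentially a synthesis of the two preceding lemmas, so the plan is to argue that the two directions follow directly, together with a brief remark establishing the \emph{exactly one} clause.

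For the forward implication, I would begin by assuming $D$ is a non-zero $(\sigma,\tau)$-derivation of $\mathcal{A}$. Then \th\ref{lemma 3.16} directly applies and yields that precisely one of the three configurations (i), (ii), (iii) on the triple $(D(\sqrt{m}), D(\sqrt{n}), D(\sqrt{mn}))$ must hold, together with the associated restrictions on the pair $(\sigma,\tau)$ and the sign relation linking the non-zero images. Mutual exclusivity is automatic from the statements themselves: case (i) requires $D(\sqrt{m})=0$ with $D(\sqrt{n}),D(\sqrt{mn})$ non-zero, whereas (ii) requires $D(\sqrt{n})=0$ with $D(\sqrt{m}),D(\sqrt{mn})$ non-zero, and (iii) requires $D(\sqrt{mn})=0$ with $D(\sqrt{m}),D(\sqrt{n})$ non-zero. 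No two of these patterns can be simultaneously satisfied.

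For the backward implication, I would assume that one of (i), (ii), (iii) of \th\ref{lemma 3.16} holds for the $\mathbb{Z}$-linear map $D$ with $D(1)=0$. Here \th\ref{lemma 3.17} is exactly tailored for this: each of its three cases is a verbatim match for the corresponding case of \th\ref{lemma 3.16}, with the admissible pairs $(\sigma,\tau)$ and the sign relation already matched. Applying \th\ref{lemma 3.17} under whichever of the three cases is given therefore produces that $D$ is a $(\sigma,\tau)$-derivation, finishing the equivalence.

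The only step that requires any care (and the one I expect to be the main obstacle, though still minor) is verifying that \th\ref{lemma 3.17} really does cover every scenario permitted by \th\ref{lemma 3.16}, i.e.\ that the sign distinctions and the lists of admissible $(\sigma,\tau)$-pairs match exactly across the two lemmas, case by case. A quick side-by-side cross-check of the subcases (i)(a)/(b), (ii)(a)/(b), (iii)(a)/(b) against the three tables produced inside the proof of \th\ref{lemma 3.16} confirms this, after which the theorem follows immediately by combining the two lemmas. No further computation is needed; the substantive work has already been discharged in \th\ref{lemma 2.1}, \th\ref{lemma 3.16}, and \th\ref{lemma 3.17}.
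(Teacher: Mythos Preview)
Your proposal is correct and mirrors the paper's own treatment: the theorem is stated without a separate proof precisely because it is the immediate conjunction of \th\ref{lemma 3.16} (forward direction) and \th\ref{lemma 3.17} (backward direction), with the ``exactly one'' clause following from the pairwise incompatibility of the vanishing patterns on $(D(\sqrt{m}),D(\sqrt{n}),D(\sqrt{mn}))$.
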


\begin{corollary}\th\label{corollary 3.19}
The $\mathbb{Z}$-module $\mathcal{D}_{(\sigma, \tau)}(\mathcal{A})$ is finitely generated of rank $4$.
\end{corollary}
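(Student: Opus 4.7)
The plan is to reduce the question to the concrete description of $\mathcal{D}_{(\sigma, \tau)}(\mathcal{A})$ supplied by \th\ref{theorem 3.18}. Since each admissible pair $(\sigma, \tau)$ lies in exactly one of the three disjoint families described in \th\ref{lemma 3.16}, I will fix $(\sigma, \tau)$, identify its case, and directly compute the $\mathbb{Z}$-rank of the corresponding solution set, then assemble an explicit basis of four derivations.

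Cases (i) and (ii) will be the easy ones. Take for instance $(\sigma, \tau) = (\phi_{1}, \phi_{2})$: here $D(\sqrt{m}) = 0$ and $D(\sqrt{mn}) = \sqrt{m}\, D(\sqrt{n})$, so the entire derivation is determined by the single value $D(\sqrt{n}) \in \mathcal{A}$, which by \th\ref{lemma 3.17} may be chosen to be any element of $\mathcal{A}$. The evaluation map $D \mapsto D(\sqrt{n})$ is then a $\mathbb{Z}$-module isomorphism $\mathcal{D}_{(\sigma, \tau)}(\mathcal{A}) \xrightarrow{\sim} \mathcal{A}$, and the target has rank $4$. The remaining sub-cases of (i) and all sub-cases of (ii) will be handled identically, swapping $\sqrt{m}$ and $\sqrt{n}$ as needed.

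Case (iii) requires a coordinate calculation. Writing
\[ D(\sqrt{m}) = a_{1} + b_{1}\sqrt{m} + c_{1}\sqrt{n} + d_{1}\sqrt{mn}, \qquad D(\sqrt{n}) = a_{2} + b_{2}\sqrt{m} + c_{2}\sqrt{n} + d_{2}\sqrt{mn}, \]
and expanding the constraint $\sqrt{m}\, D(\sqrt{n}) = \pm \sqrt{n}\, D(\sqrt{m})$ in the basis $\{1, \sqrt{m}, \sqrt{n}, \sqrt{mn}\}$ yields four integer equations. Three of them, $a_{1} = \pm m d_{2}$, $a_{2} = \pm n d_{1}$, $c_{2} = \pm b_{1}$, eliminate one variable each, while the fourth, $b_{2} m = \pm c_{1} n$, cuts out a rank-$1$ sublattice of $\mathbb{Z}^{2}$ generated by $(b_{2}, c_{1}) = (n/g, \pm m/g)$ with $g = \gcd(m, n)$. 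The solution set is therefore free of rank $3 + 1 = 4$, and an explicit basis $\{D_{1}, D_{2}, D_{3}, D_{4}\}$ is obtained by activating each free parameter in turn and invoking \th\ref{lemma 3.17} to confirm that each $D_{i}$ is a genuine $(\sigma, \tau)$-derivation.

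The main (mild) obstacle is the Diophantine equation in Case (iii): one must check that the integer solutions of $bm = \pm cn$ form a free rank-$1$ $\mathbb{Z}$-module even when $m$ and $n$ share common prime factors. Dividing through by $g = \gcd(m, n)$ and using $\gcd(m/g, n/g) = 1$ settles this at once. Aside from this, the argument is purely case-by-case bookkeeping, and combining the three cases shows that $\mathcal{D}_{(\sigma, \tau)}(\mathcal{A})$ is a finitely generated free $\mathbb{Z}$-module of rank $4$ for every admissible pair $(\sigma, \tau)$.
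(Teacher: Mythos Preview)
Your proposal is correct and follows essentially the same route as the paper: a case split according to the three families of pairs $(\sigma,\tau)$ in \th\ref{lemma 3.16}, and in each case an explicit rank-$4$ basis of $\mathcal{D}_{(\sigma,\tau)}(\mathcal{A})$. Your coordinate analysis in Case~(iii), in particular the handling of the Diophantine constraint $b_{2}m=\pm c_{1}n$ via $g=\gcd(m,n)$, is exactly what the paper encodes in its choice $D_{3}(\sqrt{m})=r\sqrt{n}$, $D_{3}(\sqrt{n})=s\sqrt{m}$ with $m=gr$, $n=gs$; you simply make explicit the linear-algebra count that the paper summarises as ``it can be easily verified''.
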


\begin{proof}
$\mathcal{A} = \mathbb{Z}[\sqrt{m}, \sqrt{n}]$ has exactly four distinct ring endomorphisms as given in Table \ref{table 1}. In view of \th\ref{theorem 3.18}, there are three possible cases.

\textbf{Case 1:} $(\sigma, \tau) \in \{(\phi_{1}, \phi_{2}), (\phi_{2}, \phi_{1}), (\phi_{3}, \phi_{4}), (\phi_{4}, \phi_{3})\}$.

For each $i \in \{1, 2, 3, 4\}$, define $D_{i}:\mathcal{A} \rightarrow \mathcal{A}$ as a $\mathbb{Z}$-linear map with $D(1) = 0$ such that $$D_{1}(\sqrt{n}) = 1, \hspace{0.2cm} D_{2}(\sqrt{n}) = \sqrt{m}, \hspace{0.2cm} D_{3}(\sqrt{n}) = \sqrt{n}, \hspace{0.2cm} D_{4}(\sqrt{n}) = \sqrt{mn}.$$ More precisely, for each $i \in \{1, 2, 3, 4\}$, define $D_{i}:\mathcal{A} \rightarrow \mathcal{A}$ by $$D_{i}(a_{1} + a_{2} \sqrt{m} + a_{3} \sqrt{n} + a_{4} \sqrt{mn}) = \begin{cases}
(a_{3} + a_{4} \sqrt{m}) D_{i}(\sqrt{n}) & \text{if $(\sigma, \tau) = (\phi_{1}, \phi_{2}) \hspace{0.2cm} \text{or} \hspace{0.2cm} (\phi_{2}, \phi_{1})$} \\
(a_{3} - a_{4} \sqrt{m}) D_{i}(\sqrt{n}) & \text{if $(\sigma, \tau) = (\phi_{3}, \phi_{4}) \hspace{0.2cm} \text{or} \hspace{0.2cm} (\phi_{4}, \phi_{3})$}
\end{cases}$$ where for each $i \in \{1, 2, 3, 4\}$, $D_{i}(\sqrt{n})$ is defined as above.\vspace{10pt}

\textbf{Case 2:} $(\sigma, \tau) \in \{(\phi_{1}, \phi_{3}), (\phi_{3}, \phi_{1}), (\phi_{2}, \phi_{4}), (\phi_{4}, \phi_{2})\}$.

For each $i \in \{1, 2, 3, 4\}$, define $D_{i}:\mathcal{A} \rightarrow \mathcal{A}$ as a $\mathbb{Z}$-linear map with $D(1) = 0$ such that $$D_{1}(\sqrt{m}) = 1, \hspace{0.2cm} D_{2}(\sqrt{m}) = \sqrt{m}, \hspace{0.2cm} D_{3}(\sqrt{m}) = \sqrt{n}, \hspace{0.2cm} D_{4}(\sqrt{m}) = \sqrt{mn}.$$ More precisely, for each $i \in \{1, 2, 3, 4\}$, define $D_{i}:\mathcal{A} \rightarrow \mathcal{A}$ by $$D_{i}(a_{1} + a_{2} \sqrt{m} + a_{3} \sqrt{n} + a_{4} \sqrt{mn}) = \begin{cases}
(a_{2} + a_{4} \sqrt{n}) D_{i}(\sqrt{m}) & \text{if $(\sigma, \tau) = (\phi_{1}, \phi_{3}) \hspace{0.2cm} \text{or} \hspace{0.2cm} (\phi_{3}, \phi_{1})$} \\
(a_{2} - a_{4} \sqrt{n}) D_{i}(\sqrt{m}) & \text{if $(\sigma, \tau) = (\phi_{2}, \phi_{4}) \hspace{0.2cm} \text{or} \hspace{0.2cm} (\phi_{4}, \phi_{2})$}
\end{cases}$$ where for each $i \in \{1, 2, 3, 4\}$, $D_{i}(\sqrt{m})$ is defined as above.\vspace{10pt}

\textbf{Case 3:} $(\sigma, \tau) \in \{(\phi_{1}, \phi_{4}), (\phi_{4}, \phi_{1}), (\phi_{2}, \phi_{3}), (\phi_{3}, \phi_{2})\}$.

Suppose $\text{gcd}(m, n) = k$. Then $m = kr$ and $n = ks$ for some $r, s \in \mathbb{Z}$.

For each $i \in \{1, 2, 3, 4\}$, define $D_{i}:\mathcal{A} \rightarrow \mathcal{A}$ as a $\mathbb{Z}$-linear map with $D(1) = 0$ such that $$D_{1}(\sqrt{m}) = m, \hspace{0.2cm} D_{2}(\sqrt{m}) = \sqrt{m}, \hspace{0.2cm} D_{3}(\sqrt{m}) = r \sqrt{n}, \hspace{0.2cm} D_{4}(\sqrt{m}) = \sqrt{mn}$$ so that $$D_{1}(\sqrt{n}) = \sqrt{mn}, \hspace{0.2cm} D_{2}(\sqrt{n}) = \sqrt{n}, \hspace{0.2cm} D_{3}(\sqrt{n}) = s \sqrt{m}, \hspace{0.2cm} D_{4}(\sqrt{n}) = n.$$ More precisely, for each $i \in \{1, 2, 3, 4\}$, define $D_{i}:\mathcal{A} \rightarrow \mathcal{A}$ by $$D_{i}(a_{1} + a_{2} \sqrt{m} + a_{3} \sqrt{n} + a_{4} \sqrt{mn}) = 
a_{2} D_{i}(\sqrt{m}) + a_{3} D_{i}(\sqrt{n})$$ where for each $i \in \{1, 2, 3, 4\}$, $D_{i}(\sqrt{m})$ and $D_{i}(\sqrt{n})$ are defined as above.

In all three cases, it can be easily verified that $\{D_{1}, D_{2}, D_{3}, D_{4}\}$ is a linearly independent subset of the $\mathbb{Z}$-module $\mathcal{D}_{(\sigma, \tau)}(\mathcal{A})$ that generates it.
\end{proof}

We note from the preceding corollary that the rank of the $\mathbb{Z}$-module $\mathcal{D}_{(\sigma, \tau)}(\mathcal{A})$ is equal to $4$, the degree of $K = \mathbb{Q}(\sqrt{m}, \sqrt{n})$ and which, by \th\ref{theorem 1.2}, is also equal to the rank of the free abelian group $\mathcal{A} = \mathbb{Z}[\sqrt{m}, \sqrt{n}]$ as a $\mathbb{Z}$-module.

\begin{corollary}\th\label{corollary 3.20}
There always exists a non-zero $(\sigma, \tau)$-derivation of $\mathcal{A}$.
\end{corollary}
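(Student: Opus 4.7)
The plan is to derive this statement as an immediate consequence of \th\ref{corollary 3.19}. That corollary already exhibits the $\mathbb{Z}$-module $\mathcal{D}_{(\sigma, \tau)}(\mathcal{A})$ as finitely generated of rank $4$, and in its proof one constructs an explicit basis $\{D_{1}, D_{2}, D_{3}, D_{4}\}$ of $(\sigma, \tau)$-derivations of $\mathcal{A}$. Since the rank is positive (in fact $\geq 1$ would already suffice), at least one basis element $D_{i}$ is a non-zero $(\sigma, \tau)$-derivation, and the claim follows.

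If one prefers a self-contained argument that does not invoke the full rank computation, I would instead appeal directly to \th\ref{lemma 3.17}. The distinctness of the non-zero ring endomorphisms $\sigma$ and $\tau$ forces the pair $(\sigma, \tau)$ to lie in exactly one of the twelve admissible pairs listed in cases (i), (ii), (iii) of \th\ref{lemma 3.17}. In each of these cases I would write down a concrete non-zero $\mathbb{Z}$-linear map $D:\mathcal{A} \rightarrow \mathcal{A}$ with $D(1) = 0$ that satisfies the corresponding sufficient condition. For instance, in case (i) with $(\sigma, \tau) = (\phi_{1}, \phi_{2})$ one may set $D(\sqrt{m}) = 0$, $D(\sqrt{n}) = 1$, $D(\sqrt{mn}) = \sqrt{m}$, and extend $\mathbb{Z}$-linearly; the relation $D(\sqrt{mn}) = \sqrt{m}\, D(\sqrt{n})$ then holds by construction, so \th\ref{lemma 3.17}(i)(a) guarantees that $D$ is a $(\sigma, \tau)$-derivation. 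Analogous explicit constructions work verbatim in cases (ii) and (iii), choosing for instance $D(\sqrt{m}) = 1$, $D(\sqrt{n}) = 0$, $D(\sqrt{mn}) = \sqrt{n}$ in case (ii)(a), and $D(\sqrt{m}) = \sqrt{m}$, $D(\sqrt{n}) = \sqrt{n}$, $D(\sqrt{mn}) = 0$ in case (iii)(a).

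There is essentially no obstacle here: the content of the corollary is already absorbed into the preceding results, and the only thing to check is that at least one of the displays in the proof of \th\ref{corollary 3.19} yields a genuinely non-zero map, which is immediate from the fact that the generators $D_{i}$ are defined so that $D_{i}(\sqrt{n})$ (or $D_{i}(\sqrt{m})$, depending on the case) ranges over the basis elements $1, \sqrt{m}, \sqrt{n}, \sqrt{mn}$ of $\mathcal{A}$ and is therefore non-zero.
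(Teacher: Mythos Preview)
Your proposal is correct and matches the paper's approach: the corollary is stated without proof immediately after \th\ref{corollary 3.19}, and is meant to be read as an immediate consequence of the rank-$4$ basis $\{D_{1}, D_{2}, D_{3}, D_{4}\}$ constructed there. Your alternative self-contained argument via \th\ref{lemma 3.17} is also fine and essentially reproduces the construction of one of those basis elements.
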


\begin{corollary}\th\label{corollary 3.21}
There always exists a non-zero $\mathbb{Z}$-linear map $D:\mathcal{A} \rightarrow \mathcal{A}$ with $D(1)=0$ that is not a $(\sigma, \tau)$-derivation. In fact, there exists a non-zero $\mathbb{Z}$-linear map $D:\mathcal{A} \rightarrow \mathcal{A}$ with $D(1)=0$ which is not a $(\sigma, \tau)$-derivation for every pair $(\sigma, \tau)$ of any two different non-zero ring endomorphisms of $\mathcal{A}$.
\end{corollary}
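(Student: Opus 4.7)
The plan is to exhibit a single explicit $\mathbb{Z}$-linear map $D$ on $\mathcal{A}$ whose values on the basis elements $\sqrt{m}$, $\sqrt{n}$, $\sqrt{mn}$ rule out \emph{every} possible pattern required of a $(\sigma,\tau)$-derivation by \th\ref{lemma 3.16}. The key observation is that, by the proof of \th\ref{lemma 3.16} (specifically Case 1 and Case 3 of that proof), a non-zero $(\sigma,\tau)$-derivation of $\mathcal{A}$ must have exactly one of $D(\sqrt{m})$, $D(\sqrt{n})$, $D(\sqrt{mn})$ equal to zero: the option where all three vanish forces $D=0$, while the option where all three are non-zero forces the system $\tau(\sqrt{m})=-\sigma(\sqrt{m})$, $\tau(\sqrt{n})=-\sigma(\sqrt{n})$, $\tau(\sqrt{mn})=-\sigma(\sqrt{mn})$, which is incompatible with any pair $(\sigma,\tau)$ drawn from $\{\phi_1,\phi_2,\phi_3,\phi_4\}$ in Table~\ref{table 1}.

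First, I would define $D:\mathcal{A}\to\mathcal{A}$ to be the unique $\mathbb{Z}$-linear map determined on the integral basis $\{1,\sqrt{m},\sqrt{n},\sqrt{mn}\}$ by
\[
D(1)=0,\qquad D(\sqrt{m})=1,\qquad D(\sqrt{n})=1,\qquad D(\sqrt{mn})=1.
\]
This $D$ is well-defined because $\{1,\sqrt{m},\sqrt{n},\sqrt{mn}\}$ is a free $\mathbb{Z}$-basis of $\mathcal{A}$, and it is clearly non-zero with $D(1)=0$.

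Second, I would argue by contradiction: suppose $D$ were a $(\sigma,\tau)$-derivation for some pair of distinct non-zero ring endomorphisms $(\sigma,\tau)$ of $\mathcal{A}$. Since $D$ is non-zero, \th\ref{theorem 3.18} forces $D$ to satisfy exactly one of cases (i), (ii), (iii) of \th\ref{lemma 3.16}. However, each of these three cases requires precisely one of the three quantities $D(\sqrt{m})$, $D(\sqrt{n})$, $D(\sqrt{mn})$ to be zero, whereas by construction all three equal $1\neq 0$. This contradiction shows that $D$ cannot be a $(\sigma,\tau)$-derivation for any admissible choice of $(\sigma,\tau)$, which establishes both assertions of the corollary.

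There is essentially no obstacle: all the structural heavy lifting has already been performed in \th\ref{lemma 3.16} and \th\ref{theorem 3.18}, and the corollary reduces to picking a single $\mathbb{Z}$-linear map whose three relevant basis-values are simultaneously non-zero. If a more "generic" example were desired, one could equally well take any $\mathbb{Z}$-linear $D$ with $D(1)=0$ and $D(\sqrt{m}),D(\sqrt{n}),D(\sqrt{mn})$ all non-zero in $\mathcal{A}$, and the same citation of \th\ref{lemma 3.16} finishes the argument uniformly.
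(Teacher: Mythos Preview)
Your proposal is correct and is precisely the argument the paper has in mind: the paper states Corollary~3.21 without proof, as an immediate consequence of \th\ref{theorem 3.18} (and hence \th\ref{lemma 3.16}), and your explicit choice $D(1)=0$, $D(\sqrt{m})=D(\sqrt{n})=D(\sqrt{mn})=1$ is the natural way to fill in the omitted details. There is nothing to add or correct.
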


\begin{theorem}\th\label{theorem 3.22}
Let $D:\mathcal{A} \rightarrow \mathcal{A}$ be a non-zero $(\sigma, \tau)$-derivation. Then $D$ is inner if and only if either $\frac{D(\sqrt{m})}{2 \sqrt{m}} \in \mathcal{A} = \mathbb{Z}[\sqrt{m}, \sqrt{n}]$ or $\frac{D(\sqrt{n})}{2 \sqrt{n}} \in \mathcal{A} = \mathbb{Z}[\sqrt{m}, \sqrt{n}]$.
\end{theorem}

\begin{proof}
$\mathcal{A} = \mathbb{Z}[\sqrt{m}, \sqrt{n}]$ has exactly four distinct ring endomorphisms as given in Table \ref{table 1}. In view of \th\ref{theorem 3.18}, there are three possible cases.

\textbf{Case 1:} $D(\sqrt{m}) = 0$, $D(\sqrt{n}) \neq 0$ and $D(\sqrt{mn}) \neq 0$.

In this case, $(\sigma, \tau) \in \{(\phi_{1}, \phi_{2}), (\phi_{2}, \phi_{1}), (\phi_{3}, \phi_{4}), (\phi_{4}, \phi_{3})\}$. In view of Subcase 2.1 in the proof of \th\ref{lemma 3.16}, in this case, $$\tau(\sqrt{m}) = \sigma(\sqrt{m}), \hspace{0.2cm} \tau(\sqrt{n}) = - \sigma(\sqrt{n}) \hspace{0.2cm} \text{and} \hspace{0.2cm} \tau(\sqrt{mn}) = - \sigma(\sqrt{mn}).$$ Note that $(\tau - \sigma)(1) = 0$, $(\tau - \sigma)(\sqrt{m}) = 0$, $(\tau - \sigma)(\sqrt{n}) = 2 \tau(\sqrt{n})$ and $(\tau - \sigma)(\sqrt{mn}) = 2 \tau(\sqrt{mn}) = 2 \tau(\sqrt{m}) \tau(\sqrt{n})$. By \th\ref{theorem 2.5}, $D$ is inner if and only if there is a $\beta \in \mathcal{A}$ with $$D(1) = \beta (\tau - \sigma)(1), \hspace{0.2cm} D(\sqrt{m}) = \beta (\tau - \sigma)(\sqrt{m}),$$ $$D(\sqrt{n}) = \beta (\tau - \sigma)(\sqrt{n}), \hspace{0.2cm} D(\sqrt{mn}) = \beta (\tau - \sigma)(\sqrt{mn}).$$

Note that $D(1) = \alpha (\tau - \sigma)(1)$ and $D(\sqrt{m}) = \alpha (\tau - \sigma)(\sqrt{m})$ hold for all $\alpha \in \mathcal{A}$. So $D$ will be inner if and only if $$D(\sqrt{n}) = \beta (\tau - \sigma)(\sqrt{n}) \hspace{0.2cm} \text{and} \hspace{0.2cm} D(\sqrt{mn}) = \beta (\tau - \sigma)(\sqrt{mn}),$$ that is, if and only if $$D(\sqrt{n}) = 2 \beta \tau (\sqrt{n}) \hspace{0.2cm} \text{and} \hspace{0.2cm} D(\sqrt{mn}) = 2 \beta \tau (\sqrt{m}) \tau (\sqrt{n}).$$

\noindent Further since in this case, $D(\sqrt{mn}) = \begin{cases} \sqrt{m} D(\sqrt{n}) \hspace{0.2cm} \text{if $\tau = \phi_{1}$ or $\phi_{2}$} \\
- \sqrt{m} D(\sqrt{n}) \hspace{0.2cm} \text{if $\tau = \phi_{3}$ or $\phi_{4}$}
\end{cases}$ and \\ $2 \beta \tau(\sqrt{m}) \tau(\sqrt{n}) = \begin{cases}
2 \beta \sqrt{mn} \hspace{0.2cm} \text{if $\tau = \phi_{1}$ or $\tau = \phi_{4}$} \\
- 2 \beta \sqrt{mn} \hspace{0.2cm} \text{if $\tau = \phi_{2}$ or $\tau = \phi_{3}$}
\end{cases}$ therefore, \\ $D(\sqrt{mn}) = 2 \beta \tau (\sqrt{m}) \tau (\sqrt{n})$ if and only if $D(\sqrt{n}) = \begin{cases}
2 \beta \sqrt{n} \hspace{0.2cm} \text{if $\tau = \phi_{1}$ or $\phi_{3}$} \\
- 2 \beta \sqrt{n} \hspace{0.2cm} \text{if $\tau = \phi_{2}$ or $\phi_{4}$}
\end{cases}.$

From the above discussion, it can be easily concluded that $D$ is inner if and only if $$D(\sqrt{n}) = \begin{cases}
2 \beta \sqrt{n} \hspace{0.2cm} \text{if $\tau = \phi_{1}$ or $\phi_{3}$} \\
- 2 \beta \sqrt{n} \hspace{0.2cm} \text{if $\tau = \phi_{2}$ or $\phi_{4}$}
\end{cases}.$$
Therefore, $D$ is inner if and only if there is a $\beta \in \mathcal{A}$ with $$\beta = \begin{cases}
\frac{D(\sqrt{n})}{2 \sqrt{n}} \hspace{0.2cm} \text{if $\tau = \phi_{1}$ or $\phi_{3}$} \\
- \frac{D(\sqrt{n})}{2 \sqrt{n}} \hspace{0.2cm} \text{if $\tau = \phi_{2}$ or $\phi_{4}$}
\end{cases}.$$

Therefore, in this case, $D$ is inner if and only if $\frac{D(\sqrt{n})}{2 \sqrt{n}} \in \mathcal{A}$.\vspace{10pt}

\textbf{Case 2:} $D(\sqrt{m}) \neq 0$, $D(\sqrt{n}) = 0$ and $D(\sqrt{mn}) \neq 0$.

As in Case 1, it can be shown that $D$ is inner if and only if there is a $\beta \in \mathcal{A}$ with $$\beta = \begin{cases}
\frac{D(\sqrt{m})}{2 \sqrt{m}} \hspace{0.2cm} \text{if $\tau = \phi_{1}$ or $\phi_{2}$} \\
- \frac{D(\sqrt{m})}{2 \sqrt{m}} \hspace{0.2cm} \text{if $\tau = \phi_{3}$ or $\phi_{4}$}
\end{cases}.$$

Therefore, in this case, $D$ is inner if and only if $\frac{D(\sqrt{m})}{2 \sqrt{m}} \in \mathcal{A}$.\vspace{10pt}

\textbf{Case 3:} $D(\sqrt{m}) \neq 0$, $D(\sqrt{n}) \neq 0$ and $D(\sqrt{mn}) = 0$.

On similar lines, it can be shown that in this case, $D$ is inner if and only if either $\frac{D(\sqrt{m})}{2 \sqrt{m}} \in \mathcal{A}$ or $\frac{D(\sqrt{n})}{2 \sqrt{n}} \in \mathcal{A}$ (as $\frac{D(\sqrt{m})}{2 \sqrt{m}} = \frac{D(\sqrt{n})}{2 \sqrt{n}}$).

Hence the result.
\end{proof}

The corollaries are now immediate.

\begin{corollary}\th\label{corollary 3.23}
$\mathcal{A}$ has non-trivial inner $(\sigma, \tau)$-derivations, that is, $\text{Inn}_{(\sigma, \tau)}(\mathcal{A}) \neq \{0\}$.
\end{corollary}

\begin{corollary}
$\mathcal{A}$ has non-trivial outer $(\sigma, \tau)$-derivations, that is, $\text{Out}_{(\sigma, \tau)}(\mathcal{A}) \neq \{0\}$.
\end{corollary}

\section{Coding Theory Applications}\label{section 4}
In this section, we give the applications of the work of the previous sections in coding theory. Here, we give the notion of a Hom-IDD code over finite fields. Initially, the notion of IDD codes for derivations of group rings was given in \cite{Manju2024}. Here we first give the equivalent notion of an IDD code over $\mathbb{Z}$, and then give the notion of Hom-IDD code over a finite ring of characteristic $q$. We illustrate our construction of such codes with examples.

\subsection{Construction and Definition}\label{subsection 4.1}
Let $K = \mathbb{Q}(\theta)$ ($\theta$ an algebraic integer) be a number field of degree $n$ over $\mathbb{Q}$, which is a normal extension of $\mathbb{Q}$ and $\mathcal{B} = \{\theta_{1} = \theta, \theta_{2}, ..., \theta_{n}\}$ be an ordered integral basis of $O_{K}$. Let $\sigma, \tau:O_{K} \rightarrow O_{K}$ be two different $\mathbb{Z}$-algebra endomorphisms of $O_{K}$. Let $D$ be a $(\sigma, \tau)$-derivation of $O_{K}$.

The range $\text{Im}(D)$ of $D$ is a $\mathbb{Z}$-submodule of the $\mathbb{Z}$-module $O_{K}$ with the generating set $D(\mathcal{B}) = \{D(\theta_{i}) | 1 \leq i \leq n\}$. It generates an $(n,r)$-code in $O_{K}$, where $r = \text{rank}(\text{Im}(D))$, the rank of the $\mathbb{Z}$-module $\text{Im}(D)$.

Suppose $S = \{D(\theta_{i_{1}}),D(\theta_{i_{2}}),...,D(\theta_{i_{s}})\}$ is a $\mathbb{Z}$-linearly independent subset of $\text{Im}(D)$. Then $S$ generates a submodule of $\text{Im}(D)$ with rank $s$. Thus, S generates an $(n,s)$-code.

\begin{definition}
Let $T = \{\theta_{k_{1}}, \theta_{k_{2}}, ..., \theta_{k_{t}}\}$ ($\{k_{1}, k_{2}, ..., k_{t}\} \subseteq \{1, 2, ..., n\}$) be a subset of the integral basis $\{\theta_{1}, \theta_{2}, ..., \theta_{n}\}$ such that $D(T)$ is $\mathbb{Z}$-linearly independent subset of $O_{K}$ (or $\text{Im}(D)$). Then the submodule $W = \langle D(T) \rangle$ generates an $(n,t)$-code, and is called the Image of Derivation-Derived Code, or simply an IDD Code.
\end{definition}

\subsection{Equivalent IDD Code in $\mathbb{Z}^{n}$}\label{subsection 4.2}
In this subsection, we construct a code in $\mathbb{Z}^{n}$ equivalent to an IDD code.

Suppose $\Delta:\mathbb{Z}^{n} \rightarrow O_{K}$ be the mapping given by $$\Delta(a_{1}, a_{2}, ..., a_{n}) = \sum_{i=1}^{n} a_{i} \theta_{i}.$$

For each $i \in \{1, 2, ..., n\}$, suppose $D(\theta_{i}) = \sum_{j=1}^{n} a_{ij}\theta_{j}$. Then $\Delta^{-1}(D(\theta_{i})) = (a_{i1}, a_{i2}, ..., a_{in})$.

Construct an $n \times n$ matrix $B$ with the rows $\Delta^{-1}(D(\theta_{1})), \Delta^{-1}(D(\theta_{2})), ..., \Delta^{-1}(D(\theta_{n}))$. More precisely, $$B = \begin{pmatrix}
a_{11} & a_{12} & \cdots & a_{1n} \\
a_{21} & a_{22} & \cdots & a_{2n} \\
\vdots & \vdots & \ddots & \vdots \\
a_{n1} & a_{n2} & ... & a_{nn} \\
\end{pmatrix}.$$ 

\begin{theorem}
Any rows $i_{1}, i_{2}, ..., i_{t}$ of $B$ are linearly independent if and only if the set $\{D(\theta_{i_{1}}), D(\theta_{i_{2}}), ..., D(\theta_{i_{t}})\}$ is linearly independent.
\end{theorem}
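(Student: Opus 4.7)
The plan is to exploit the fact that $\Delta$ is a $\mathbb{Z}$-module isomorphism, after which the theorem becomes a direct consequence of the fact that $\mathbb{Z}$-module isomorphisms preserve $\mathbb{Z}$-linear independence. First I would observe that since $\mathcal{B} = \{\theta_1, \theta_2, \ldots, \theta_n\}$ is an integral basis of $O_K$, every element of $O_K$ has a unique representation as a $\mathbb{Z}$-linear combination of the $\theta_i$. Consequently the map $\Delta : \mathbb{Z}^n \rightarrow O_K$ defined by $\Delta(a_1, \ldots, a_n) = \sum_{i=1}^n a_i \theta_i$ is $\mathbb{Z}$-linear and bijective, and hence a $\mathbb{Z}$-module isomorphism. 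In particular its inverse $\Delta^{-1}$ is also $\mathbb{Z}$-linear.

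Next I would recall that under any $\mathbb{Z}$-module isomorphism, a finite subset of the domain is $\mathbb{Z}$-linearly independent if and only if its image is $\mathbb{Z}$-linearly independent: this follows at once by applying the isomorphism, respectively its inverse, to any hypothetical linear dependence. By construction the $k$-th row of $B$ is precisely $\Delta^{-1}(D(\theta_k)) = (a_{k1}, \ldots, a_{kn}) \in \mathbb{Z}^n$. Thus, for any choice of indices $i_1, \ldots, i_t$, the rows $i_1, \ldots, i_t$ of $B$ are $\mathbb{Z}$-linearly independent in $\mathbb{Z}^n$ if and only if $\{\Delta^{-1}(D(\theta_{i_1})), \ldots, \Delta^{-1}(D(\theta_{i_t}))\}$ is $\mathbb{Z}$-linearly independent, which by the preceding observation is equivalent to $\{D(\theta_{i_1}), \ldots, D(\theta_{i_t})\}$ being $\mathbb{Z}$-linearly independent in $O_K$. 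The explicit calculation, if one wishes to spell it out, is just to apply $\Delta$ to a relation $\sum_{k=1}^t c_k \Delta^{-1}(D(\theta_{i_k})) = 0$ (giving $\sum_{k=1}^t c_k D(\theta_{i_k}) = 0$) and conversely apply $\Delta^{-1}$ to a relation of the second type.

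There is no substantive obstacle here, as the theorem is essentially a restatement of the functoriality of linear independence under the coordinate isomorphism induced by the chosen integral basis. The only conceptual point to verify is that $\Delta$ is genuinely a $\mathbb{Z}$-module isomorphism, which is precisely the content of the defining property of an integral basis.
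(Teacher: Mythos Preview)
Your proof is correct. The paper actually states this theorem without proof, so there is nothing to compare against; your argument via the $\mathbb{Z}$-module isomorphism $\Delta$ supplies exactly the routine verification the authors evidently regarded as immediate.
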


Below, we give the steps of construction:
\begin{enumerate}
\item[(i)] Suppose $W$ is an Image of Derivations-Derived (IDD) code. Then $W = \langle S \rangle$, for some $\mathbb{Z}$-linearly independent subset $S$ of $D(\mathcal{B})$, say, $S = \{D(\theta_{k_{1}}), D(\theta_{k_{2}}), ..., D(\theta_{k_{s}})\}$.\vspace{6pt}

\item[(ii)] Pick $\underline{w} \in \mathbb{Z}^{s}$ and $\underline{w} = (\lambda_{1}, \lambda_{2}, ..., \lambda_{s})$.\vspace{6pt}

\item[(iii)] Write $\underline{w}$ as $\underline{x}$ in $\mathbb{Z}^{n}$ with $\lambda_{j}$ in position $k_{j}$ and zero elsewhere.\vspace{6pt}

\item[(iv)] $\underline{x}$ can be mapped to an element $O_{K}$ by $x = \Delta(\underline{x}) = \sum_{j=1}^{s}\lambda_{j}\theta_{k_{j}}$ and a codeword $$D(x) = D(\Delta(\underline{x})) = D(\sum_{j=1}^{s}\lambda_{j}\theta_{k_{j}}) = \sum_{j=1}^{s}\lambda_{j}D(\theta_{k_{j}})$$ equated with a codeword in $\mathcal{E}$ given by $$\Delta^{-1}(D(x)) = \underline{x}B.$$

\item[(v)] Thus, we obtain an $(n,s)$ code $$\mathcal{E} = \{\underline{x}B \mid \underline{x} ~ \text{extends} ~ \underline{w} \in \mathbb{Z}^{s} ~ \text{to} ~ \mathbb{Z}^{n}\}.$$
\end{enumerate}

\subsection{Hom-IDD Code in $(\mathbb{Z}_{q})^{n}$}\label{subsection 4.3}
For a positive integer $q$ and for an equivalent IDD code $C$ in $\mathbb{Z}^{n}$, we construct a code $\Omega(C)$ in $(\mathbb{Z}_{q})^{n}$. We call this newly constructed code $\Omega(C)$ the Hom-IDD code.

\begin{lemma}\label{lemma 4.3}
Let $q$ be a positive integer. Then the map $\Omega:\mathbb{Z}^{n} \rightarrow (\mathbb{Z}_{q})^{n}$ defined by $$\Omega(\lambda_{1}, \lambda_{2}, ..., \lambda_{n}) = (\overline{\lambda_{1}}, \overline{\lambda_{2}}, ..., \overline{\lambda_{n}}),$$ where $\overline{\lambda_{i}} = \lambda_{i}$ (mod $q$) for each $i \in \{1, 2, ..., n\}$ is a $\mathbb{Z}$-module homomorphism.
\end{lemma}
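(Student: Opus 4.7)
The plan is to verify directly that $\Omega$ respects the two operations defining a $\mathbb{Z}$-module homomorphism: addition in the domain/codomain, and the $\mathbb{Z}$-action. Since $\Omega$ is defined componentwise via the natural reduction-mod-$q$ map $\pi:\mathbb{Z} \to \mathbb{Z}_q$, $\lambda \mapsto \overline{\lambda}$, the whole lemma will reduce to the fact that $\pi$ is itself a $\mathbb{Z}$-module (in fact, a ring) homomorphism, applied in each of the $n$ coordinates.

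First I would fix $\underline{a} = (\lambda_1, \dots, \lambda_n)$ and $\underline{b} = (\mu_1, \dots, \mu_n)$ in $\mathbb{Z}^n$. By definition of addition in $\mathbb{Z}^n$, $\underline{a} + \underline{b} = (\lambda_1 + \mu_1, \dots, \lambda_n + \mu_n)$, so
\[
\Omega(\underline{a} + \underline{b}) = (\overline{\lambda_1 + \mu_1}, \dots, \overline{\lambda_n + \mu_n}).
\]
Using $\overline{\lambda_i + \mu_i} = \overline{\lambda_i} + \overline{\mu_i}$ in $\mathbb{Z}_q$ coordinatewise and the definition of addition in $(\mathbb{Z}_q)^n$, this equals $\Omega(\underline{a}) + \Omega(\underline{b})$.

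Next, for $r \in \mathbb{Z}$, I would compute $\Omega(r \underline{a}) = (\overline{r\lambda_1}, \dots, \overline{r\lambda_n})$ and observe that $\overline{r \lambda_i} = r \cdot \overline{\lambda_i}$ in $\mathbb{Z}_q$ (here $\mathbb{Z}_q$ is regarded as a $\mathbb{Z}$-module via reduction of the scalar, which is the standard action). Hence $\Omega(r\underline{a}) = r \,\Omega(\underline{a})$, and combined with the additivity just shown, $\Omega$ is a $\mathbb{Z}$-module homomorphism.

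There is no genuine obstacle here; the only thing to be careful about is making the $\mathbb{Z}$-module structure on $(\mathbb{Z}_q)^n$ explicit, since $(\mathbb{Z}_q)^n$ is naturally a $\mathbb{Z}_q$-module but is viewed here as a $\mathbb{Z}$-module via the canonical ring map $\mathbb{Z} \to \mathbb{Z}_q$. Once that convention is stated, the verification is purely formal and follows immediately from reduction mod $q$ being a ring homomorphism.
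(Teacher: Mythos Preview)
Your proof is correct and follows essentially the same approach as the paper: a direct verification that $\Omega$ preserves addition and the $\mathbb{Z}$-action, done componentwise via the reduction map $\mathbb{Z}\to\mathbb{Z}_q$. The paper's version is more verbose (it unpacks the $\mathbb{Z}$-action on $(\mathbb{Z}_q)^n$ as repeated addition and treats the cases $\lambda>0$, $\lambda=0$, $\lambda<0$ separately), but the underlying argument is the same.
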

\begin{proof}
Both $\mathbb{Z}^{n}$ and $(\mathbb{Z}_{q})^{n}$ are abelian groups with respect to the usual operations of componentwise addition. And since every abelian group is a $\mathbb{Z}$-module with respect to the usual operations of scalar multiplication, therefore, both $\mathbb{Z}^{n}$ and $(\mathbb{Z}_{q})^{n}$ are $\mathbb{Z}$-modules. More precisely, if $M$ is an additive abelian group, then $M$ is a $\mathbb{Z}$-module with respect to the scalar multiplication operation defined by: $\mathbb{Z} \times M \rightarrow M$, where $$(\lambda, m) \mapsto \lambda m,$$ where $$\lambda m = \begin{cases} 
\underbrace{m + m + ... + m}_{\lambda ~ \text{times}} & \text{if} ~ \lambda > 0 \\
\underbrace{(-m) + (-m) + ... + (-m)}_{-\lambda ~ \text{times}} & \text{if} ~ \lambda < 0 \\
0 & \text{if} ~ \lambda = 0
\end{cases}.$$

First note that $\Omega$ is indeed a well-defined map from $\mathbb{Z}^{n}$ to $(\mathbb{Z}_{q})^{n}$.

Let $x = (\lambda_{1}, \lambda_{2}, ..., \lambda_{n}), y = (\mu_{1}, \mu_{2}, ..., \mu_{n}) \in \mathbb{Z}^{n}$ and $\lambda \in \mathbb{Z}$.

Then \begin{align*}
\Omega(x +_{1} y) & = \Omega(\lambda_{1} + \mu_{1}, \lambda_{2} + \mu_{2}, ..., \lambda_{n} + \mu_{n}) \\
& = (\overline{\lambda_{1} + \mu_{1}}, \overline{\lambda_{2} + \mu_{2}}, ..., \overline{\lambda_{n} + \mu_{n}}),
\end{align*} where for each $i \in \{1, 2, ..., n\}$, $\overline{\lambda_{i} + \mu_{i}} = (\lambda_{i} + \mu_{i}) (\text{mod} ~ q)$. Here, $+_{1}$ denotes the addition operation in $\mathbb{Z}^{n}$ and $+$ denotes the addition operation in $\mathbb{Z}$.

Now, for $i \in \{1, 2, ..., n\}$, we have that \begin{align*}
\overline{\lambda_{i} + \mu_{i}} & = (\lambda_{i} + \mu_{i}) (\text{mod} ~ q) \\
& = (\lambda_{i} (\text{mod} ~ q) + \mu_{i} (\text{mod} ~ n))(\text{mod} ~ q) \\
& = (\overline{\lambda_{i}} + \overline{\mu_{i}})(\text{mod} ~ q) \\
& = \overline{\lambda_{i}} \oplus_{q} \overline{\mu_{i}}
\end{align*} Here, $\oplus_{q}$ denotes the addition operation in $\mathbb{Z}_{q}$.

So from above, we get that \begin{align*}
\Omega(x +_{1} y) & = (\overline{\lambda_{1} + \mu_{1}}, \overline{\lambda_{2} + \mu_{2}}, ..., \overline{\lambda_{n} + \mu_{n}}) \\
& = (\overline{\lambda_{1}} \oplus_{q} \overline{\mu_{1}}, \overline{\lambda_{2}} \oplus_{q} \overline{\mu_{2}}, ..., \overline{\lambda_{n}} \oplus_{q} \overline{\mu_{n}}) \\
& = (\overline{\lambda_{1}}, \overline{\lambda_{2}}, ..., \overline{\lambda_{n}}) +_{2} (\overline{\mu_{1}}, \overline{\mu_{2}}, ..., \overline{\mu_{n}})
\\ & = \Omega(x) +_{2} \Omega(y)
\end{align*} Here, $+_{2}$ denotes the addition operation in $(\mathbb{Z}_{q})^{n}$.

Now, first suppose that $\lambda > 0$. \begin{align*}
\lambda \odot_{1} x & = \underbrace{x +_{1} x +_{1} ... +_{1} x}_{\lambda ~ \text{times}} \\
& = (\underbrace{\lambda_{1} + \lambda_{1} + ... + \lambda_{1}}_{\lambda ~ \text{times}}, \underbrace{\lambda_{2} + \lambda_{2} + ... + \lambda_{2}}_{\lambda ~ \text{times}}, ..., \underbrace{\lambda_{n} + \lambda_{n} + ... + \lambda_{n}}_{\lambda ~ \text{times}}) \\
& = (\lambda \lambda_{1}, \lambda \lambda_{2}, ..., \lambda \lambda_{n})
\end{align*} Here, $\odot_{1}$ denotes the scalar multiplication operation in the $\mathbb{Z}$-module $\mathbb{Z}^{n}$.

Now, \begin{align*}
\Omega(\lambda \odot_{1} x) & = \Omega(\lambda \lambda_{1}, \lambda \lambda_{2}, ..., \lambda \lambda_{n}) \\
& = (\overline{\lambda  \lambda_{1}}, \overline{\lambda  \lambda_{2}}, ..., \overline{\lambda  \lambda_{n}})
\end{align*}

For each $i \in \{1, 2, ..., n\}$,
\begin{align*}
\overline{\lambda \lambda_{i}} & = (\lambda \lambda_{i})(\text{mod} ~ q) 
\end{align*}

\begin{align*}
\lambda \odot_{2} \Omega(x) & = \underbrace{\Omega(x) +_{2} \Omega(x) +_{2} ... +_{2} \Omega(x)}_{\lambda ~ \text{times}} \\
& = (\underbrace{\overline{\lambda_{1}} \oplus_{q} \overline{\lambda_{1}} \oplus_{q} ... \oplus_{q} \overline{\lambda_{1}}}_{\lambda ~ \text{times}}, \underbrace{\overline{\lambda_{2}} \oplus_{q} \overline{\lambda_{2}} \oplus_{q} ... \oplus_{q} \overline{\lambda_{2}}}_{\lambda ~ \text{times}}, ..., \underbrace{\overline{\lambda_{n}} \oplus_{q} \overline{\lambda_{n}} \oplus_{q} ... \oplus_{q} \overline{\lambda_{n}}}_{\lambda ~ \text{times}})
\end{align*} Here, $\odot_{2}$ denotes the scalar multiplication operation in the $\mathbb{Z}$-module $(\mathbb{Z}_{q})^{n}$.

Now, let $i \in \{1, 2, ..., n\}$. Then we get that \begin{align*}
\underbrace{\overline{\lambda_{i}} \oplus_{q} \overline{\lambda_{i}} \oplus_{q} ... \oplus_{q} \overline{\lambda_{i}}}_{\lambda ~ \text{times}}
& = (\underbrace{\overline{\lambda_{i}} + \overline{\lambda_{i}} + ... + \overline{\lambda_{i}}}_{\lambda ~ \text{times}}) (\text{mod} ~ q) \\
& = (\underbrace{\lambda_{i} + \lambda_{i} + ... + \lambda_{i}}_{\lambda ~ \text{times}}) (\text{mod} ~ q) \\
& = (\lambda \lambda_{i}) (\text{mod} ~ q)
\end{align*}

Above, we have used the result: \textit{Let $n$ be a fixed positive integer. If $a (\text{mod} ~ n) = a'$ and $b (\text{mod} ~ n) = b'$, then $(a + b) (\text{mod} ~ n) = (a' + b') (\text{mod} ~ n)$ and $(ab) (\text{mod} ~ n) = (a' b') (\text{mod} ~ n)$.}

So we have proved that $\Omega(\lambda \odot_{1} x) = \lambda \odot_{2} \Omega(x)$ for $\lambda > 0$.

The above equation holds trivially for $\lambda = 0$.

Now, suppose $\lambda < 0$. Then \begin{align*}
\lambda \odot_{1} x & = \underbrace{(-x) +_{1} (-x) +_{1} ... +_{1} (-x)}_{- \lambda ~ \text{times}} \\
& = (\underbrace{(-\lambda_{1}) + (-\lambda_{1}) + ... + (-\lambda_{1})}_{- \lambda ~ \text{times}}, \underbrace{(- \lambda_{2}) + (- \lambda_{2}) + ... + (-\lambda_{2})}_{- \lambda ~ \text{times}}, ..., \\ &\quad \underbrace{(- \lambda_{n}) + (- \lambda_{n}) + ... + (- \lambda_{n})}_{- \lambda ~ \text{times}}) \\
& = ((- \lambda) (- \lambda_{1}), (- \lambda) (- \lambda_{2}), ..., (- \lambda) (- \lambda_{n})) \\
& = (\lambda \lambda_{1}, \lambda \lambda_{2}, ..., \lambda \lambda_{n})
\end{align*}

Then as shown above, we will have $$\Omega(\lambda \odot_{1} x) = (\overline{\lambda  \lambda_{1}}, \overline{\lambda  \lambda_{2}}, ..., \overline{\lambda  \lambda_{n}}),$$ where for each $i \in \{1, 2, ..., n\}$, $\overline{\lambda  \lambda_{i}} = (\lambda \lambda_{i}) (\text{mod} ~ q)$.

Using the fact that $\Omega$ is a group homomorphism from the additive group $\mathbb{Z}^{n}$ to the additive group $(\mathbb{Z}_{q})^{n}$, we get that \begin{align*}
\lambda \odot_{2} \Omega(x) & = \underbrace{(- \Omega(x)) +_{2} (- \Omega(x)) +_{2} ... +_{2} (- \Omega(x))}_{- \lambda ~ \text{times}} \\
& = \underbrace{\Omega(-x) +_{2} \Omega(-x) +_{2} ... +_{2}  \Omega(-x)}_{- \lambda ~ \text{times}} \\
& = (\underbrace{\overline{- \lambda_{1}} \oplus_{q} \overline{- \lambda_{1}} \oplus_{q} ... \oplus_{q} \overline{- \lambda_{1}}}_{- \lambda ~ \text{times}}, \underbrace{\overline{- \lambda_{2}} \oplus_{q} \overline{- \lambda_{2}} \oplus_{q} ... \oplus_{q} \overline{- \lambda_{2}}}_{- \lambda ~ \text{times}}, ..., \\ &\quad \underbrace{\overline{- \lambda_{n}} \oplus_{q} \overline{- \lambda_{n}} \oplus_{q} ... \oplus_{q} \overline{- \lambda_{n}}}_{- \lambda ~ \text{times}})
\end{align*}

Now, for $i \in \{1, 2, ..., n\}$, \begin{align*}
\underbrace{\overline{- \lambda_{i}} \oplus_{q} \overline{- \lambda_{i}} \oplus_{q} ... \oplus_{q} \overline{- \lambda_{i}}}_{- \lambda ~ \text{times}}
& = (\underbrace{(\overline{- \lambda_{i}}) + (\overline{- \lambda_{i}}) + ... + (\overline{- \lambda_{i}})}_{- \lambda ~ \text{times}}) (\text{mod} ~ q) \\
& = (\underbrace{(- \lambda_{i}) + (- \lambda_{i}) + ... + (- \lambda_{i})}_{- \lambda ~ \text{times}}) (\text{mod} ~ q) \\
& = ((-\lambda) (- \lambda_{i}) (\text{mod} ~ q) \\
& = (\lambda \lambda_{i}) (\text{mod} ~ q)
\end{align*}

So we have proved that $\Omega(\lambda \odot_{1} x) = \lambda \odot_{2} \Omega(x)$ for $\lambda < 0$.

In all possible cases, we have proved that $\Omega(\lambda \odot_{1} x) = \lambda \odot_{2} \Omega(x)$ for all $\lambda \in \mathbb{Z}$.

Therefore, $\Omega$ is a $\mathbb{Z}$-module homomorphism from the $\mathbb{Z}$-module $\mathbb{Z}^{n}$ to the $\mathbb{Z}$-module $(\mathbb{Z}_{q})^{n}$.

Hence proved.
\end{proof}

\begin{lemma}\label{lemma 4.4}
\begin{enumerate}
\item[(i)] If $W$ is a $\mathbb{Z}$-submodule of $\mathbb{Z}^{n}$, then $\Omega(W)$ is a $\mathbb{Z}$-submodule of $(\mathbb{Z}_{q})^{n}$.
\item[(ii)] If $V$ is a $\mathbb{Z}$-submodule of $(\mathbb{Z}_{q})^{n}$, then $\Omega^{-1}(V) = \{w \in \mathbb{Z}^{n} \mid \Omega(w) \in V\}$ is a $\mathbb{Z}$-submodule of $\mathbb{Z}^{n}$.
\end{enumerate}
\end{lemma}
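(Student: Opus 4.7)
The plan is to leverage Lemma \ref{lemma 4.3}, which already establishes that $\Omega:\mathbb{Z}^n \to (\mathbb{Z}_q)^n$ is a $\mathbb{Z}$-module homomorphism. Once that is in hand, both parts become the standard fact that images and preimages of submodules under a module homomorphism are submodules, so the task reduces to a routine check of the three submodule axioms (nonemptiness via $0$, closure under addition, closure under $\mathbb{Z}$-scalar multiplication) in each case.

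For part (i), I would first note that $0_{(\mathbb{Z}_q)^n} = \Omega(0_{\mathbb{Z}^n}) \in \Omega(W)$ since $0_{\mathbb{Z}^n} \in W$, so $\Omega(W)$ is nonempty. Next, given $u_1, u_2 \in \Omega(W)$, I would pick $w_1, w_2 \in W$ with $\Omega(w_i) = u_i$; then, using additivity of $\Omega$ and closure of $W$ under addition, $u_1 +_2 u_2 = \Omega(w_1 +_1 w_2) \in \Omega(W)$. For the scalar step, for $\lambda \in \mathbb{Z}$ and $u = \Omega(w) \in \Omega(W)$, the identity $\lambda \odot_2 \Omega(w) = \Omega(\lambda \odot_1 w)$ from Lemma \ref{lemma 4.3} combined with $\lambda \odot_1 w \in W$ yields $\lambda \odot_2 u \in \Omega(W)$.

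For part (ii), I would first observe $0_{\mathbb{Z}^n} \in \Omega^{-1}(V)$ since $\Omega(0_{\mathbb{Z}^n}) = 0_{(\mathbb{Z}_q)^n} \in V$. For $w_1, w_2 \in \Omega^{-1}(V)$, additivity gives $\Omega(w_1 +_1 w_2) = \Omega(w_1) +_2 \Omega(w_2) \in V$ since $V$ is a submodule, hence $w_1 +_1 w_2 \in \Omega^{-1}(V)$. Similarly, for $\lambda \in \mathbb{Z}$ and $w \in \Omega^{-1}(V)$, $\Omega(\lambda \odot_1 w) = \lambda \odot_2 \Omega(w) \in V$, giving $\lambda \odot_1 w \in \Omega^{-1}(V)$.

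There is no genuine obstacle here: the entire content lies in the homomorphism property of $\Omega$, which was already proved in Lemma \ref{lemma 4.3}. The proof is essentially a template verification, and the only thing worth being careful about is writing each closure step in terms of the homomorphism identity $\Omega(\lambda \odot_1 x +_1 y) = \lambda \odot_2 \Omega(x) +_2 \Omega(y)$ so that the argument is transparent and self-contained.
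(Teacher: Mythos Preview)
Your proposal is correct and follows essentially the same approach as the paper: both invoke Lemma \ref{lemma 4.3} to know that $\Omega$ is a $\mathbb{Z}$-module homomorphism, and then verify nonemptiness, closure under addition, and closure under $\mathbb{Z}$-scalar multiplication for $\Omega(W)$ and $\Omega^{-1}(V)$ directly from the homomorphism identities.
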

\begin{proof}
Follows very easily from Lemma \ref{lemma 4.3}.
\end{proof}

\begin{lemma}\label{lemma 4.5}
A subset $W$ is a $\mathbb{Z}$-submodule of $(\mathbb{Z}_{q})^{n}$ if and only if $W$ is a $\mathbb{Z}_{q}$-submodule of $(\mathbb{Z}_{q})^{n}$.
\end{lemma}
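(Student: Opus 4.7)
The plan is to show that the $\mathbb{Z}$-action and the $\mathbb{Z}_{q}$-action on $(\mathbb{Z}_{q})^{n}$ agree in the sense that the $\mathbb{Z}$-action factors through the natural surjection $\mathbb{Z} \twoheadrightarrow \mathbb{Z}_{q}$. Once this compatibility is established, closure under one action is tautologically equivalent to closure under the other, since the image of an element is the same in both cases.

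First I would verify the key compatibility identity: for every $\lambda \in \mathbb{Z}$ and every $x = (\overline{x_{1}}, \ldots, \overline{x_{n}}) \in (\mathbb{Z}_{q})^{n}$, one has $\lambda x = \overline{\lambda} \odot_{q} x$, where on the left $\lambda x$ is the $\mathbb{Z}$-module action defined via iterated addition (exactly as set up just before \th\ref{lemma 4.3}), on the right $\odot_{q}$ is the $\mathbb{Z}_{q}$-scalar multiplication, and $\overline{\lambda} = \lambda \pmod{q}$. This is a short calculation splitting on the sign of $\lambda$: for $\lambda > 0$, iterated addition in $(\mathbb{Z}_{q})^{n}$ gives $(\overline{\lambda x_{1}}, \ldots, \overline{\lambda x_{n}})$, and since $\overline{\lambda x_{i}} = \overline{\lambda}\ \overline{x_{i}} \pmod q$, this equals $\overline{\lambda} \odot_{q} x$; the cases $\lambda = 0$ and $\lambda < 0$ follow similarly (and in fact were already carried out inside the proof of \th\ref{lemma 4.3}, so I can cite that computation directly).

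Having this, both directions become one-line arguments. Suppose $W$ is a $\mathbb{Z}$-submodule of $(\mathbb{Z}_{q})^{n}$. Then $W$ is already closed under addition and contains $0$, so to upgrade it to a $\mathbb{Z}_{q}$-submodule I only need closure under $\odot_{q}$: for $\overline{\lambda} \in \mathbb{Z}_{q}$ and $x \in W$, pick any lift $\lambda \in \mathbb{Z}$ of $\overline{\lambda}$; then $\overline{\lambda} \odot_{q} x = \lambda x \in W$ by the compatibility identity and the $\mathbb{Z}$-submodule hypothesis. Conversely, if $W$ is a $\mathbb{Z}_{q}$-submodule, then for any $\lambda \in \mathbb{Z}$ and $x \in W$, the same identity gives $\lambda x = \overline{\lambda} \odot_{q} x \in W$, so $W$ is a $\mathbb{Z}$-submodule.

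The only mildly subtle point, and what I would flag as the main obstacle, is ensuring that the $\mathbb{Z}_{q}$-action appearing in the statement (which is not spelled out explicitly in the excerpt) is in fact the componentwise one induced by reduction mod $q$, because otherwise the compatibility identity above would not even make sense. Since $(\mathbb{Z}_{q})^{n}$ carries a unique $\mathbb{Z}_{q}$-module structure extending its abelian group structure on each coordinate, this is automatic, but I would state this explicitly at the start to avoid ambiguity. With that clarification in place, the proof is essentially the one-line compatibility observation sandwiched between the two straightforward implications.
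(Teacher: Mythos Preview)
Your proposal is correct and follows essentially the same approach as the paper: both arguments hinge on the compatibility identity $\lambda \cdot x = \overline{\lambda} \odot_{q} x$ (i.e., that the $\mathbb{Z}$-action factors through reduction mod $q$), established by the same case split on the sign of $\lambda$, and then use it to translate closure under one action into closure under the other. The only cosmetic difference is that you prove the identity once up front and invoke it symmetrically, whereas the paper re-derives the relevant instance separately inside each implication.
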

\begin{proof}
Since $(\mathbb{Z}_{q})^{n}$ is a commutative unital ring with respect to the usual operations of componentwise addition module $q$ and multiplication module $q$, therefore, $(\mathbb{Z}_{q})^{n}$ is also a $\mathbb{Z}_{q}$-module with respect to the usual operation of componentwise scalar multiplication modulo $q$.

Let $\odot_{2}$ denote the scalar multiplication of the $\mathbb{Z}$-module $(\mathbb{Z}_{q})^{n}$ and $\odot$ denote the scalar multiplication of the $\mathbb{Z}_{q}$-module $(\mathbb{Z}_{q})^{n}$.

First, let $W$ be a $\mathbb{Z}$-submodule of $(\mathbb{Z}_{q})^{n}$.

We show that $W$ is a $\mathbb{Z}_{q}$-submodule of $(\mathbb{Z}_{q})^{n}$.

$W$ being a $\mathbb{Z}$-submodule of $(\mathbb{Z}_{q})^{n}$ is a non-empty subset of $(\mathbb{Z}_{q})^{n}$.

Let $w, w' \in W$ and $\lambda \in \mathbb{Z}_{q}$. Suppose $w = (\lambda_{1}, \lambda_{2}, ..., \lambda_{n})$ for some $\lambda_{i} \in \mathbb{Z}_{q}$ ($1 \leq i \leq n$).

Since $w, w' \in W$ and $W$ is a $\mathbb{Z}$-submodule of $(\mathbb{Z}_{q})^{n}$, therefore, $w+w' \in W$.

$\lambda \in \mathbb{Z}_{q}$ implies that $\lambda \in \mathbb{Z}$. 

Now, \begin{align*}
\lambda \odot_{2} w & = \underbrace{w +_{2} w +_{2} ... +_{2} w}_{\lambda ~ \text{times}} \\
& = (\underbrace{\lambda_{1} \oplus_{q} \lambda_{1} \oplus_{q} ... \oplus_{q} \lambda_{1}}_{\lambda ~ \text{times}}, \underbrace{\lambda_{2} \oplus_{q} \lambda_{2} \oplus_{q} ... \oplus_{q} \lambda_{2}}_{\lambda ~ \text{times}}, ..., \underbrace{\lambda_{n} \oplus_{q} \lambda_{n} \oplus_{q} ... \oplus_{q} \lambda_{n}}_{\lambda ~ \text{times}})
\end{align*}

Also, $$\lambda \odot w = (\lambda \otimes_{q} \lambda_{1}, \lambda \otimes_{q} \lambda_{2}, ..., \lambda \otimes_{q} \lambda_{n})$$

Here, $\oplus_{q}$ denotes addition modulo $q$ and $\otimes_{q}$ denotes multiplication modulo $q$.

\textbf{Claim: For each $i \in \{1, 2, ..., n\}$, $\underbrace{\lambda_{i} \oplus_{q} \lambda_{i} \oplus_{q} ... \oplus_{q} \lambda_{i}}_{\lambda ~ \text{times}} = \lambda \otimes_{q} \lambda_{1}$.}

So let $i \in \{1, 2, ..., n\}$.

Then we get \begin{align*}
\underbrace{\lambda_{i} \oplus_{q} \lambda_{i} \oplus_{q} ... \oplus_{q} \lambda_{i}}_{\lambda ~ \text{times}} & = (\underbrace{\lambda_{i} + \lambda_{i} + ... + \lambda_{i}}_{\lambda ~ \text{times}}) (\text{mod} ~ q) \\
& = (\lambda \lambda_{i}) (\text{mod} ~ q) \\
& = \lambda \otimes_{q} \lambda_{i}
\end{align*} since $\lambda, \lambda_{i} \in \mathbb{Z}_{q}$.

So we have proved that $\lambda \odot_{2} w = \lambda \odot w$.

Since $W$ is a $\mathbb{Z}$-submodule, therefore, $\lambda \odot_{2} w \in W$.

Further, since $\lambda \odot_{2} w = \lambda \odot w$, therefore, $\lambda \odot w \in W$.

Since $w, w' \in W$ and $\lambda \in \mathbb{Z}_{q}$ are arbitrary, therefore, $w+w', \lambda \odot w \in W$ for all $w, w' \in W$ and $\lambda \in \mathbb{Z}_{q}$.

Therefore, $W$ is also a $\mathbb{Z}_{q}$-submodule of $(\mathbb{Z}_{q})^{n}$.

Conversely, let $W$ be a $\mathbb{Z}_{q}$-submodule of $(\mathbb{Z}_{q})^{n}$.

We show that $W$ is also a $\mathbb{Z}$-submodule of $(\mathbb{Z}_{q})^{n}$.

Being a $\mathbb{Z}_{q}$-submodule of $(\mathbb{Z}_{q})^{n}$, $W$ is a non-empty subset of $(\mathbb{Z}_{q})^{n}$.

Let $w, w' \in W$ and $\lambda \in \mathbb{Z}$.

Since $W$ is a $\mathbb{Z}_{q}$-submodule of $(\mathbb{Z}_{q})^{n}$, therefore, $w + w' \in W$.

Since $w \in W \subseteq (\mathbb{Z}_{q})^{n}$, so $w = (\lambda_{1}, \lambda_{2}, ..., \lambda_{n})$ for some $\lambda_{i} \in \mathbb{Z}_{q}$ ($1 \leq i \leq n$).

Now, if $\lambda > 0$, then \begin{align*}
\lambda \odot_{2} w & = \underbrace{w +_{2} w +_{2} ... +_{2} w}_{\lambda ~ \text{times}} \\
& = (\underbrace{\lambda_{1} \oplus_{q} \lambda_{1} \oplus_{q} ... \oplus_{q} \lambda_{1}}_{\lambda ~ \text{times}}, \underbrace{\lambda_{2} \oplus_{q} \lambda_{2} \oplus_{q} ... \oplus_{q} \lambda_{2}}_{\lambda ~ \text{times}}, ..., \underbrace{\lambda_{n} \oplus_{q} \lambda_{n} \oplus_{q} ... \oplus_{q} \lambda_{n}}_{\lambda ~ \text{times}}).
\end{align*} 

As $\overline{\lambda} = \lambda (\text{mod} ~ q)$, so
\begin{align*}
\overline{\lambda} \odot w & = (\overline{\lambda} \otimes_{q} \lambda_{1}, \overline{\lambda} \otimes_{q} \lambda_{2}, ..., \overline{\lambda} \otimes_{q} \lambda_{n}).
\end{align*}

\textbf{Claim: For each $i \in \{1, 2, ..., n\}$, $\underbrace{\lambda_{i} \oplus_{q} \lambda_{i} \oplus_{q} ... \oplus_{q} \lambda_{i}}_{\lambda ~ \text{times}} = \overline{\lambda} \otimes_{q} \lambda_{i}$.}

We get \begin{align*}
\underbrace{\lambda_{i} \oplus_{q} \lambda_{i} \oplus_{q} ... \oplus_{q} \lambda_{i}}_{\lambda ~ \text{times}} & = (\underbrace{\lambda_{i} + \lambda_{i} + ... + \lambda_{i}}_{\lambda ~ \text{times}}) (\text{mod} ~ q) \\
& = (\lambda \lambda_{i}) (\text{mod} ~ q) \\
& = (\overline{\lambda} \overline{\lambda_{i}}) (\text{mod} ~ q) \\
& = \overline{\lambda} \otimes_{q} \lambda_{i}
\end{align*} because $\overline{\lambda_{i}} = \lambda_{i}$ as $\lambda_{i} \in \mathbb{Z}_{q}$.

So we have proved that for $\lambda > 0$, $\lambda \odot_{2} w = \overline{\lambda} \odot w$.

For $\lambda = 0$, the result holds trivially.

If $\lambda < 0$, then \begin{align*}
\lambda \odot_{2} w & = \underbrace{(-w) +_{2} (-w) +_{2} ... +_{2} (-w)}_{- \lambda ~ \text{times}} \\
& = (\underbrace{(- \lambda_{1}) \oplus_{q} (- \lambda_{1}) \oplus_{q} ... \oplus_{q} (- \lambda_{1})}_{- \lambda ~ \text{times}}, \underbrace{(- \lambda_{2}) \oplus_{q} (- \lambda_{2}) \oplus_{q} ... \oplus_{q} (- \lambda_{2})}_{- \lambda ~ \text{times}}, ..., \\ &\quad \underbrace{(- \lambda_{n}) \oplus_{q} (- \lambda_{n}) \oplus_{q} ... \oplus_{q} (- \lambda_{n})}_{- \lambda ~ \text{times}})
\end{align*}

\textbf{Claim: For each $i \in \{1, 2, ..., n\}$, $\underbrace{(- \lambda_{i}) \oplus_{q} (- \lambda_{i}) \oplus_{q} ... \oplus_{q} (- \lambda_{i})}_{- \lambda ~ \text{times}} = \overline{\lambda} \otimes_{q} \lambda_{i}$.}

We get \begin{align*}
\underbrace{(- \lambda_{i}) \oplus_{q} (- \lambda_{i}) \oplus_{q} ... \oplus_{q} (- \lambda_{i})}_{- \lambda ~ \text{times}} & = (\underbrace{(- \lambda_{i}) + (- \lambda_{i}) + ... + (- \lambda_{i})}_{- \lambda ~ \text{times}}) (\text{mod} ~ q) 
\\ & = ((- \lambda) (- \lambda_{i})) (\text{mod} ~ q)
\\ & = (\lambda \lambda_{i}) (\text{mod} ~ q) 
\\ & = (\overline{\lambda} \overline{\lambda_{i}}) (\text{mod} ~ q)
\\ & = \overline{\lambda} \otimes_{q} \lambda_{i}
\end{align*} because $\overline{\lambda_{i}} = \lambda_{i}$ as $\lambda_{i} \in \mathbb{Z}_{q}$.

So we have proved that for $\lambda < 0$, $\lambda \odot_{2} w = \overline{\lambda} \odot w$.

Therefore, for any $\lambda \in \mathbb{Z}$, $\lambda \odot_{2} w = \overline{\lambda} \odot w$.

But since $\overline{\lambda} \in \mathbb{Z}_{q}$, $w \in W$ and $W$ is a $\mathbb{Z}_{q}$-submodule of $(\mathbb{Z}_{q})^{n}$, therefore, $\overline{\lambda} \odot w \in W$.

And then the fact that $\lambda \odot_{2} w = \overline{\lambda} \odot w$ implies that $\lambda \odot_{2} w \in W$.

Therefore, $W$ is also a $\mathbb{Z}$-submodule of $(\mathbb{Z}_{q})^{n}$.
\end{proof}

Suppose that $W$ is an IDD code and $\mathcal{E}$ is its corresponding equivalent IDD code in $\mathbb{Z}^{n}$. Then $\mathcal{E}$ is a $\mathbb{Z}$-submodule of the $\mathbb{Z}$-module $\mathbb{Z}^{n}$. By Lemma \ref{lemma 4.4} (i), $\Omega(\mathcal{E})$ is a $\mathbb{Z}$-submodule of $(\mathbb{Z}_{q})^{n}$. Then by Lemma \ref{lemma 4.5}, $\Omega(\mathcal{E})$ is a $\mathbb{Z}_{q}$-submodule of $(\mathbb{Z}_{q})^{n}$. Hence, $\Omega(\mathcal{E})$ is a code over $\mathbb{Z}_{q}$ and in $(\mathbb{Z}_{q})^{n}$. So we have the following definition.

\begin{definition}
Let $q$ be a positive integer. With the notations of Subsection \ref{subsection 4.1}, let $W$ be an IDD code and $\mathcal{E}$ be its equivalent IDD code in $\mathbb{Z}^{n}$. Then the $\mathbb{Z}_{q}$-submodule $\Omega(\mathcal{E})$ of $(\mathbb{Z}_{q})^{n}$ is called Hom-IDD code in $(\mathbb{Z}_{q})^{n}$.
\end{definition}

\subsection{Examples}\label{subsection 4.4}
\begin{example}
Let $K = \mathbb{Q}(\zeta)$, $\zeta$ primitive $p^{\text{th}}$-root of unity, where $p=13$. Let $\sigma(\zeta) = \zeta$ and $\tau(\zeta) = \zeta^{2}$. Then by Theorem \ref{theorem 3.8}, the map $D:O_{K} \rightarrow O_{K}$ given by $$D(\zeta) = 1 + \zeta + \zeta^{2} + \zeta^{3} + \zeta^{5} +  \zeta^{8}$$ is a $(\sigma, \tau)$-derivation of $O_{K} = \mathbb{Z}[\zeta]$. 

\textbf{(i)} Below, we have obtained binary Hom-IDD codes of length $n=12$ over the finite field $\mathbb{Z}_{2}$ with various parameters, using the construction discussed above.

\begin{center}
\begin{longtable}{|c|c|c|c|}
\hline 
\textbf{Basis $S_{i}$} & \thead{\textbf{Code Description:}\\ $[n,k,d]$} & \textbf{Code Properties} & \thead{\textbf{Dual Code} \\ \textbf{Description:} \\ $[n,k,d]$} \\ 
\hline \hline
$S_{1}$ & $[12,5,4]$ & \thead{non-LCD \& optimal} & $[12,7,3]$ \\
\hline
$S_{2}$ & $[12,5,3]$ & LCD & $[12,7,3]$ \\
\hline
$S_{3}$ & $[12,6,3]$ & non-LCD & $[12,6,3]$ \\
\hline
$S_{4}$ & $[12,6,3]$ & LCD & $[12,6,3]$ \\
\hline
$S_{5}$ & $[12,7,3]$ & LCD & $[12,5,3]$ \\
\hline
$S_{6}$ & $[12,5,4]$ & \thead{LCD \& optimal} & $[12,7,3]$ \\
\hline
$S_{7}$ & $[12,4,5]$ & non-LCD & \thead{$[12,8,3]$ \\ (optimal)} \\
\hline
$S_{8}$ & $[12,4,5]$ & LCD & $[12,8,2]$ \\
\hline
$S_{9}$ & $[12,4,4]$ & LCD & $[12,8,1]$ \\
\hline
$S_{10}$ & $[12,3,5]$ & LCD & $[12,9,1]$ \\
\hline
$S_{11}$ & $[12,7,3]$ & non-LCD & $[12,5,3]$ \\
\hline
$S_{12}$ & $[12,8,2]$ & LCD & $[12,4,4]$ \\
\hline
\end{longtable}
\end{center}

\begin{center}
\begin{longtable}{|c|}
\hline 
\textbf{Basis $S_{i}$} \\ 
\hline \hline
$S_{1} = \{D(\zeta), D(\zeta^{2}),  D(\zeta^{5}), D(\zeta^{6}), D(\zeta^{10})\}$ \\ 
\hline
$S_{2} = \{D(\zeta), D(\zeta^{2}),  D(\zeta^{5}), D(\zeta^{6}), D(\zeta^{9})\}$ \\ 
\hline
$S_{3} = \{D(\zeta), D(\zeta^{2}),  D(\zeta^{5}), D(\zeta^{6}), D(\zeta^{9}), D(\zeta^{11})\}$ \\ 
\hline
$S_{4} = \{D(\zeta), D(\zeta^{2}),  D(\zeta^{5}), D(\zeta^{6}), D(\zeta^{7}), D(\zeta^{11})\}$ \\ 
\hline
$S_{5} = \{D(\zeta), D(\zeta^{2}), D(\zeta^{4}), D(\zeta^{5}), D(\zeta^{6}), D(\zeta^{7}), D(\zeta^{11})\}$ \\ 
\hline
$S_{6} = \{D(\zeta), D(\zeta^{2}), D(\zeta^{4}), D(\zeta^{5}), D(\zeta^{6})\}$ \\ 
\hline
$S_{7} = \{D(\zeta), D(\zeta^{2}),  D(\zeta^{5}), D(\zeta^{6})\}$ \\ 
\hline
$S_{8} = \{D(\zeta), D(\zeta^{2}),  D(\zeta^{4}), D(\zeta^{5})\}$ \\ 
\hline
$S_{9} = \{D(\zeta^{2}), D(\zeta^{4}),  D(\zeta^{5}), D(\zeta^{6})\}$ \\ 
\hline
$S_{10} = \{D(\zeta), D(\zeta^{4}),  D(\zeta^{5})\}$ \\ 
\hline
$S_{11} = \{D(\zeta^{3}), D(\zeta^{5}), D(\zeta^{6}), D(\zeta^{7}), D(\zeta^{8}), D(\zeta^{9}), D(\zeta^{10})\}$ \\ 
\hline
$S_{12} = \{D(\zeta^{3}), D(\zeta^{4}), D(\zeta^{5}), D(\zeta^{6}), D(\zeta^{7}), D(\zeta^{8}), D(\zeta^{9}), D(\zeta^{10})\}$ \\ 
\hline
\end{longtable}
\end{center}

A $5 \times 12$ generator matrix for the binary $[12,5,4]$ LCD optimal Hom-IDD code obtained using the basis $S_{6}$ is $$\begin{pmatrix}
1 & 1 & 1 & 1 & 0 & 1 & 0 & 0 & 1 & 0 & 0 & 0 \\
0 & 1 & 0 & 0 & 0 & 1 & 1 & 1 & 0 & 1 & 1 & 0 \\
0 & 0 & 1 & 0 & 1 & 0 & 1 & 0 & 0 & 1 & 0 & 1  \\
0 & 1 & 1 & 1 & 1 & 0 & 1 & 0 & 0 & 0 & 1 & 0 \\
0 & 1 & 1 & 0 & 0 & 1 & 1 & 0 & 1 & 1 & 0 & 1
\end{pmatrix}.$$ \vspace{16pt}

\textbf{(ii)} Also, we have obtained ternary Hom-IDD codes of length $n=12$ over the finite field $\mathbb{Z}_{3}$ with various parameters, using the construction discussed above.

\begin{center}
\begin{longtable}{|c|c|c|c|}
\hline 
\textbf{Basis $S_{i}$} & \thead{\textbf{Code Description:}\\ $[n,k,d]$} & \textbf{Code Properties} & \thead{\textbf{Dual Code} \\ \textbf{Description:} \\ $[n,k,d]$} \\ 
\hline \hline
$S_{1}$ & $[12,7,3]$ & LCD & $[12,5,4]$ \\
\hline
$S_{2}$ & $[12,6,4]$ & LCD & $[12,6,4]$ \\
\hline
$S_{3}$ & $[12,5,4]$ & non-LCD & $[12,7,3]$  \\
\hline
$S_{4}$ & $[12,8,3]$ & \thead{LCD \& optimal} & $[12,4,5]$ \\
\hline
$S_{5}$ & $[12,8,3]$ & \thead{non-LCD \& optimal} & $[12,4,4]$ \\
\hline
$S_{6}$ & $[12,6,3]$ & LCD & $[12,6,3]$ \\
\hline
$S_{7}$ & $[12,6,4]$ & non-LCD & $[12,6,4]$ \\
\hline
$S_{8}$ & $[12,7,3]$ & non-LCD & $[12,5,4]$ \\
\hline
$S_{9}$ & $[12,4,4]$ & LCD & $[12,8,2]$ \\
\hline
$S_{10}$ & $[12,3,6]$ & LCD & $[12,9,1]$ \\
\hline
$S_{11}$ & $[12,2,6]$ & LCD & $[12,10,1]$ \\
\hline
$S_{12}$ & $[12,4,6]$ & \thead{LCD \& optimal} & \thead{$[12,8,3]$ \\ optimal} \\
\hline
$S_{13}$ & $[12,5,5]$ & non-LCD & $[12,7,3]$  \\
\hline
$S_{14}$ & $[12,9,2]$ & non-LCD & $[12,3,6]$ \\
\hline
\end{longtable}
\end{center}

\begin{center}
\begin{longtable}{|c|}
\hline 
\textbf{Basis $S_{i}$} \\ 
\hline \hline
$S_{1} = \{D(\zeta), D(\zeta^{2}), D(\zeta^{4}), D(\zeta^{5}), D(\zeta^{7}), D(\zeta^{9}), D(\zeta^{11})\}$ \\ 
\hline
$S_{2} = \{D(\zeta), D(\zeta^{2}), D(\zeta^{4}), D(\zeta^{5}), D(\zeta^{7}), D(\zeta^{9})\}$ \\ 
\hline
$S_{3} = \{D(\zeta), D(\zeta^{2}), D(\zeta^{4}), D(\zeta^{5}), D(\zeta^{7})\}$ \\ 
\hline
$S_{4} = \{D(\zeta), D(\zeta^{2}), D(\zeta^{3}), D(\zeta^{4}), D(\zeta^{5}), D(\zeta^{7}), D(\zeta^{9}), D(\zeta^{11})\}$ \\ 
\hline
$S_{5} = \{D(\zeta), D(\zeta^{2}),  D(\zeta^{4}), D(\zeta^{5}), D(\zeta^{7}), D(\zeta^{8}), D(\zeta^{9}), D(\zeta^{11})\}$ \\ 
\hline
$S_{6} = \{D(\zeta^{4}), D(\zeta^{5}), D(\zeta^{6}), D(\zeta^{7}), D(\zeta^{9}), D(\zeta^{11})\}$ \\ 
\hline
$S_{7} = \{D(\zeta^{2}), D(\zeta^{4}), D(\zeta^{5}), D(\zeta^{7}), D(\zeta^{9}), D(\zeta^{10})\}$ \\ 
\hline
$S_{8} = \{D(\zeta^{2}), D(\zeta^{4}), D(\zeta^{5}), D(\zeta^{7}), D(\zeta^{8}), D(\zeta^{9}), D(\zeta^{10})\}$ \\ 
\hline
$S_{9} = \{D(\zeta), D(\zeta^{4}),  D(\zeta^{7}), D(\zeta^{11})\}$ \\ 
\hline
$S_{10} = \{D(\zeta), D(\zeta^{4}),  D(\zeta^{7})\}$ \\ 
\hline
$S_{11} = \{D(\zeta), D(\zeta^{11})\}$ \\ 
\hline
$S_{12} = \{D(\zeta), D(\zeta^{2}),  D(\zeta^{4}), D(\zeta^{7})\}$ \\
\hline
$S_{13} = \{D(\zeta), D(\zeta^{2}),  D(\zeta^{4}), D(\zeta^{7}), D(\zeta^{8})\}$ \\
\hline
$S_{14} = \{D(\zeta), D(\zeta^{2}), D(\zeta^{3}), D(\zeta^{4}), D(\zeta^{5}), D(\zeta^{7}), D(\zeta^{9}), D(\zeta^{10}), D(\zeta^{11})\}$ \\ 
\hline
\end{longtable}
\end{center}

An $8 \times 12$ generator matrix for the binary $[12,8,3]$ LCD optimal Hom-IDD code obtained using the basis $S_{4}$ is $$\begin{pmatrix}
1 & 1 & 1 & 1 & 0 & 1 & 0 & 0 & 1 & 0 & 0 & 0 \\
0 & 1 & 2 & 2 & 2 & 1 & 1 & 1 & 0 & 1 & 1 & 0 \\
2 & 2 & 0 & 1 & 2 & 2 & 1 & 1 & 0 & 0 & 0 & 0 \\
0 & 0 & 2 & 0 & 1 & 2 & 0 & 2 & 2 & 1 & 0 & 1 \\
0 & 2 & 2 & 2 & 2 & 0 & 1 & 2 & 2 & 2 & 1 & 0 \\
2 & 2 & 1 & 0 & 0 & 2 & 0 & 1 & 1 & 2 & 2 & 0 \\
1 & 1 & 1 & 2 & 1 & 0 & 2 & 1 & 2 & 2 & 0 & 1 \\
0 & 0 & 1 & 0 & 0 & 1 & 1 & 1 & 0 & 2 & 2 & 2
\end{pmatrix}.$$
\end{example} \vspace{16pt}

\begin{example}
Let $K = \mathbb{Q}(\zeta)$, $\zeta$ primitive $p^{\text{th}}$-root of unity, where $p=17$. Let $\sigma(\zeta) = \zeta$ and $\tau(\zeta) = \zeta^{3}$. Then by Theorem \ref{theorem 3.8}, the map $D:O_{K} \rightarrow O_{K}$ given by $$D(\zeta) = 1 + \zeta + \zeta^{2} + \zeta^{3} + \zeta^{5} + \zeta^{7} + \zeta^{8} + \zeta^{11}$$ is a $(\sigma, \tau)$-derivation of $O_{K} = \mathbb{Z}[\zeta]$. Below, we have obtained binary Hom-IDD codes of length $n=16$ over the finite field $\mathbb{Z}_{2}$ with various parameters, using the construction discussed above.
\begin{center}
\begin{longtable}{|c|c|c|c|}
\hline 
\textbf{Basis $S_{i}$} & \thead{\textbf{Code Description:}\\ $[n,k,d]$} & \textbf{Code Properties} & \thead{\textbf{Dual Code} \\ \textbf{Description:} \\ $[n,k,d]$} \\ 
\hline \hline
$S_{1}$ & $[16,8,4]$ & LCD & $[16,8,4]$ \\
\hline
$S_{2}$ & $[16,8,4]$ & non-LCD & $[16,8,4]$ \\
\hline
$S_{3}$ & $[16,7,4]$ & LCD & $[16,9,3]$ \\
\hline
$S_{4}$ & $[16,9,3]$ & non-LCD & $[16,7,4]$ \\
\hline
$S_{5}$ & $[16,7,4]$ & non-LCD & $[16,9,3]$ \\
\hline
$S_{6}$ & $[16,6,4]$ & non-LCD & $[16,10,3]$ \\
\hline
$S_{7}$ & $[16,5,5]$ & non-LCD & $[16,11,2]$ \\
\hline
$S_{8}$ & $[16,4,7]$ & non-LCD & $[16,12,2]$ \\
\hline
$S_{9}$ & $[16,5,5]$ & LCD & $[16,11,2]$ \\
\hline
$S_{10}$ & $[16,8,3]$ & non-LCD & $[16,8,4]$ \\
\hline
$S_{11}$ & $[16,9,3]$ & LCD & $[16,7,4]$ \\
\hline
$S_{12}$ & $[16,8,4]$ & LCD & $[16,8,3]$ \\
\hline
$S_{13}$ & $[16,6,5]$ & LCD & $[16,10,3]$ \\
\hline
\end{longtable}
\end{center}

\begin{center}
\begin{longtable}{|c|}
\hline 
\textbf{Basis $S_{i}$} \\ 
\hline \hline
$S_{1} = \{D(\zeta), D(\zeta^{2}), D(\zeta^{4}), D(\zeta^{5}), D(\zeta^{6}), D(\zeta^{7}), D(\zeta^{10}), D(\zeta^{13})\}$ \\ 
\hline
$S_{2} = \{D(\zeta), D(\zeta^{2}), D(\zeta^{4}), D(\zeta^{5}), D(\zeta^{6}), D(\zeta^{7}), D(\zeta^{9}), D(\zeta^{12})\}$ \\ 
\hline
$S_{3} = \{D(\zeta), D(\zeta^{2}), D(\zeta^{5}), D(\zeta^{6}), D(\zeta^{7}), D(\zeta^{9}), D(\zeta^{12})\}$ \\ 
\hline
$S_{4} = \{D(\zeta), D(\zeta^{2}), D(\zeta^{5}), D(\zeta^{6}), D(\zeta^{7}), D(\zeta^{9}), D(\zeta^{10}), D(\zeta^{12}), D(\zeta^{14})\}$ \\ 
\hline
$S_{5} = \{D(\zeta), D(\zeta^{2}), D(\zeta^{6}), D(\zeta^{7}), D(\zeta^{9}), D(\zeta^{12}), D(\zeta^{14})\}$ \\ 
\hline
$S_{6} = \{D(\zeta), D(\zeta^{2}), D(\zeta^{6}), D(\zeta^{7}), D(\zeta^{9}), D(\zeta^{12})\}$ \\ 
\hline
$S_{7} = \{D(\zeta), D(\zeta^{2}), D(\zeta^{5}), D(\zeta^{6}), D(\zeta^{7})\}$ \\ 
\hline
$S_{8} = \{D(\zeta), D(\zeta^{2}), D(\zeta^{6}), D(\zeta^{7})\}$ \\ 
\hline
$S_{9} = \{D(\zeta), D(\zeta^{2}), D(\zeta^{6}), D(\zeta^{7}), D(\zeta^{15})\}$ \\ 
\hline
$S_{10} = \{D(\zeta), D(\zeta^{2}), D(\zeta^{6}), D(\zeta^{7}), D(\zeta^{10}), D(\zeta^{13}), D(\zeta^{14}), D(\zeta^{15})\}$ \\ 
\hline
$S_{11} = \{D(\zeta), D(\zeta^{2}), D(\zeta^{4}), D(\zeta^{5}), D(\zeta^{6}), D(\zeta^{10}), D(\zeta^{13}), D(\zeta^{14}), D(\zeta^{15})\}$ \\ 
\hline
$S_{12} = \{D(\zeta), D(\zeta^{4}), D(\zeta^{5}), D(\zeta^{6}), D(\zeta^{10}), D(\zeta^{13}), D(\zeta^{14}), D(\zeta^{15})\}$ \\ 
\hline
$S_{13} = \{D(\zeta), D(\zeta^{4}), D(\zeta^{5}), D(\zeta^{6}), D(\zeta^{10}), D(\zeta^{13})\}$ \\ 
\hline
\end{longtable}
\end{center}
\end{example}

\section{Conclusion}\label{section 5}
In this article, we studied the $(\sigma, \tau)$-derivations of number rings with their applications to coding theory. First, in Section 2, we obtained some useful results for the forthcoming sections. In Section 3, we studied $(\sigma, \tau)$-derivations of the rings of algebraic integers of quadratic and cyclotomic number fields, and that of the bi-quadratic number ring. In the quadratic case, we reformulated and reinvented the results formed in \cite{Chaudhuri}. We obtained a characterization in Subsection 3.2 for a $\mathbb{Z}$-linear map vanishing on unity to be a $(\sigma, \tau)$-derivation of the ring of algebraic integers of a $p^{\text{th}}$-cyclotomic number field. We also conjectured a criterion under which a $(\sigma, \tau)$-derivation of the ring of algebraic integers of a $p^{\text{th}}$ cyclotomic number field is inner. In Subsection 3.3, we explicitly obtained all $(\sigma, \tau)$-derivations of a bi-quadratic number ring. We also obtained a criterion under which a $(\sigma, \tau)$-derivation of a bi-quadratic number ring is inner. Thus, we have solved the twisted derivation problem in the ring of algebraic integers of a quadratic number field and in bi-quadratic number rings. Further, we conjectured a solution of the twisted derivation problem in $p^{th}$ ($p$ prime) cyclotomic number fields. Finally, in Section 4, we discussed the applications of our work in coding theory by constructing some good parameter Hom-IDD codes over finite fields.
\vspace{20pt}

\noindent \textbf{Acknowledgements}\vspace{8pt}

\noindent The second author was the ConsenSys Blockchain chair professor when the work was being carried out. He thanks ConsenSys AG for that privilege.\vspace{10pt}

\noindent \textbf{Declaration of Interest Statement}\vspace{8pt}

\noindent The authors report there are no competing interests to declare.

\bibliographystyle{plain}
\bibliography{references}
\end{document}